\newtheorem*{theorem*}{\bf Theorem}
\newtheorem*{addendum*}{\bf Addendum}
\newtheorem{definition}{\bf Definition}[section]
\newtheorem{theorem}[definition]{\bf Theorem}
\newtheorem{lemma}[definition]{\bf Lemma}
\newtheorem{proposition}[definition]{\bf Proposition}
\newtheorem{corollary}[definition]{\bf Corollary}
\newtheorem{remark}[definition]{\bf Remark}
\newtheorem*{conjecture*}{\bf Conjecture}
\newcommand\BA{\mathbf A}\newcommand\BB{\mathbf B}\newcommand\BC{\mathbf C}\newcommand\BD{\mathbf D}
\def\QQ{\mathbb Q}
\def\ZZ{\mathbb Z}
\def\RR{\mathbb R}
\def\cO{\mathcal O}
\def\cA{\mathcal A}
\def\cB{\mathcal B}
\newcommand{\Diff}{{\operatorname{Diff}}}
\newcommand\Id{\operatorname{Id}}
\renewcommand{\color}[1]{\bf}
\definecolor{darkgreen}{rgb}{0.0, 0.3, 0.0}
\date{\today}
\title{Local perturbations of conservative $C^1$ diffeomorphisms}
\author{J\'er\^ome Buzzi, Sylvain Crovisier, Todd Fisher
\footnote{S.C. was partially supported by the ERC project 692925 \emph{NUHGD}.  T.F.\ is supported by Simons Foundation grant \# 239708.}}
\renewcommand{\l@section}{\@dottedtocline{2}{3.8em}{3.2em}}
\renewcommand{\l@subsection}{\@dottedtocline{3}{3.8em}{3.2em}}
\newcommand{\subsectionruninhead}{\@startsection{subsection}{2}{0mm}{-\baselineskip}{-0mm}{\bf\large}}
\newcommand{\subsubsectionruninhead}{\@startsection{subsubsection}{3}{0mm}{-\baselineskip}{-0mm}{\bf\normalsize}}
\begin{document}

\maketitle

\begin{abstract}
A number of techniques have been developed to perturb the dynamics of $C^1$-diffeomorphisms and to modify the properties of their periodic orbits. {For instance, one can} locally linearize the dynamics, change the tangent dynamics, or create local homoclinic orbits.  These techniques have been crucial for the understanding of $C^1$ dynamics, but {their most precise forms} have mostly been shown in the dissipative setting.  This work extends these results to volume-preserving and especially symplectic systems. {These tools underlie our study of the entropy of
$C^1$-diffeomorphisms in \cite{BCF}. We also give an application to the approximation of transitive invariant sets without genericity assumptions}.   
\end{abstract}

{\renewcommand{\thefootnote}{}%
\footnote{\emph{Mathematics Subject Classification (2010):} Primary 37C15; Secondary 37B40; 37D05; 37D30}}

\section{Introduction} 
According to often-cited words  of Poincar\'e,  periodic and heteroclinic orbits provide a ``breach into the fortress'' 
\cite[p.2]{Poincare1891} that is differentiable dynamics.  This key insight is still relevant today:
indeed, the closing and connecting lemmas established  \cite{pugh, Mane82, H, BC} in the $C^1$-topology lead to approximation by periodic orbits of chain-transitive sets and ergodic measures for $C^1$-generic systems
{(the  $C^1$-topology offers flexibility while preserving the differentiable structure).}
Some key dynamical properties, such as the existence or lack of a dominated splitting
on the tangent bundle, can then be detected on the periodic orbits.
{This text will focus on the perturbative approach.}

\paragraph{\rm\textit{The dissipative case.}}
The first results about linearization and modification of the tangent dynamics were motivated by  the $C^1$-stability conjecture:
Franks~\cite{franks} proved that one can perturb the tangent dynamics
over a given finite set by a small $C^1$-perturbation.
With this technique, the possible changes to the Lyapunov exponents and the angle between
stable and unstable spaces were first studied by 
Pliss~\cite{pliss}, Liao~\cite{liao}, and Ma\~n\'e~\cite{Mane82}.  
This very
local {analysis} has been {systematized} in~\cite{BGV} and more recently
in~\cite{BochiBonatti}.

The investigation of robust transitivity~\cite{DPU, BDP} led to a new approach where one is allowed to choose the periodic orbit supporting the perturbation.
Taking advantage of homoclinic orbits, a notion of ``transition"
(close to the specification property)
 was used to find  periodic points exhibiting the properties necessary {for} the perturbations. 
This second technique is powerful but requires that the initial system already has
homoclinic orbits. Moreover the perturbation is realized along a periodic orbit
which may approximate a large set. {Hence the perturbation is not local and its}  support is
difficult to control.

Perturbations with controlled support creating rich dynamics have been recently
built in~\cite{PS, wen,Gourmelon2010}.  These results {yield}  homoclinic tangencies and transverse homoclinic orbits in an arbitrarily small neighborhood of a hyperbolic periodic orbit.
Combining an improved
Franks lemma with perturbative results of linear systems, Gourmelon in~\cite{Gourmelon2010,Gourmelon2014} has shown how to perform various perturbations while preserving homoclinic relations.
This is crucial in many applications, e.g., in order to work inside a given homoclinic class, see \cite{BCDG,BCS,potrie} among others.

For a survey of the dissipative case, we refer to~\cite{C}.

\paragraph{\it The conservative case.}
For volume-preserving or symplectic systems,
this topic has not been so systematically investigated.
It has been shown that elliptic points of symplectic
diffeomorphisms characterize a lack of hyperbolicity~\cite{newhouse, arnaud, ArBC, HT, saghin-xia,CH} in dimensions 2 and 4.
We mention that some works {in smooth ergodic theory (dealing with the tangent dynamics over Lebesgue-almost every point instead of over periodic orbits) have developed related} arguments, see~\cite{bochi,BV}.
Other interesting properties pertaining to the entropy have been studied~\cite{CT, catalan1, catalan2, AACS}.
We note that these recent results mainly use the transition approach from~\cite{BDP} and do not provide local versions in the symplectic setting.

\paragraph{\it Results.}
Our goal is to systematically extend 
the perturbation tools to the conservative settings, trying as much as possible
to follow the local approach of~\cite{BochiBonatti}. Let us list our results
{(deferring the precise statements)}:
 \begin{itemize}\setlength\itemsep{0em}
  \item[--] Franks' lemma, linearization and preservation of homoclinic connections (Theorems \ref{t.linearize} and \ref{l.gourmelon}),
  \item[--] perturbation of the spectrum of periodic orbits: {achieving simplicity} 
(Proposition \ref{p.simple}), {realness (Proposition \ref{p.real}) or equal modulus for the stable and for the unstable eigenvalues} (Theorem \ref{t.BochiBonatti});
  \item[--] further perturbations of the tangent dynamics above periodic orbits:
   making the angle between stable and unstable spaces arbitrarily small (Theorem \ref{t.angle});
  \item[--] birth of homoclinic tangencies for a hyperbolic periodic orbit without strong dominated splitting (Theorem  \ref{t.gourmelon}).
 \end{itemize}

\paragraph{\it Consequences.} This paper started during the preparation of~\cite{BCF} which {studies}
the entropy of $C^1$-diffeomorphisms under a lack of domination. Hence, several applications
of the present paper are explained there. {We also note that the tools we present allow an easy and complete proof of such basic results as the necessity of a dominated splitting for robust transitivity (see the applications given in \cite{avila}).} We close this introduction by giving an
additional application.

An invariant compact set $\Lambda$ for a diffeomorphism $f$ on a boundaryless manifold $M$
has a \emph{dominated splitting} if there exists a decomposition
$TM|_{\Lambda}=E\oplus F$ of the tangent bundle of $M$ above $\Lambda$ in two invariant continuous subbundles and an integer $N\geq 1$ such that for all $x\in \Lambda$, all $n\geq N$ and all unit vectors $u\in E(x)$ and $v\in F(x)$ we have,
 $$
    \|Df^nu\|\leq \|Df^n v\|/2.
 $$

\begin{theorem}\label{t.horseshoes}
Let $\Lambda$ be a transitive invariant infinite compact set for a $C^1$-diffeo\-mor\-phism $f$
on a manifold $M$. If $\Lambda$ has no dominated splitting,
then there exists a diffeomorphism $g$ arbitrarily $C^1$-close to $f$
having a horseshoe $K$ that is arbitrarily close to $\Lambda$ for the Hausdorff topology.
If $f$ preserves a volume or a symplectic form, one can choose $g$ to preserve it also.
\end{theorem}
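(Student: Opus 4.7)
The plan is to produce the horseshoe in three stages: (i) close $\Lambda$ into a periodic orbit $\gamma$ that is Hausdorff-close to it; (ii) perturb the tangent dynamics along $\gamma$ so that $\gamma$ becomes hyperbolic without any strong dominated splitting; (iii) use Theorem~\ref{t.gourmelon} to create a homoclinic tangency at $\gamma$, unfold it, and extract a horseshoe localized near $\gamma$.

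For Step (i), since $\Lambda$ is transitive and infinite, for every $\varepsilon>0$ it contains an orbit segment $\{x,f(x),\ldots,f^n(x)\}$ that is $\varepsilon$-dense in $\Lambda$. Applying the connecting lemma in its conservative versions (as in the Bonatti--Crovisier approximation of chain-recurrence classes, carried out in the volume-preserving and symplectic category), we obtain a $C^1$-small conservative perturbation $g_0$ of $f$ possessing a periodic orbit $\gamma$ with $d_H(\gamma,\Lambda)<2\varepsilon$. No genericity of $f$ is invoked at this stage.

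For Step (ii), the finest dominated splitting over a compact invariant set is upper-semicontinuous both in the set (for the Hausdorff topology) and in the dynamics (for the $C^1$-topology). The assumption that $\Lambda$ carries no dominated splitting therefore forces, once $\varepsilon$ is small enough and $g_0$ close enough to $f$, the minimal integer $N$ yielding an $N$-dominated splitting on $\gamma$ to exceed any prescribed threshold. Using the conservative Franks' lemma (Theorem~\ref{t.linearize}) together with Proposition~\ref{p.simple}, Theorem~\ref{t.BochiBonatti} and Theorem~\ref{t.angle}, all supported in an arbitrarily small tubular neighborhood of $\gamma$, we further perturb so that $\gamma$ becomes a hyperbolic periodic orbit with simple spectrum, equal moduli between the weakest contracting and weakest expanding eigenvalues, and arbitrarily small angle between $E^s$ and $E^u$; in particular $\gamma$ admits no strong dominated splitting.

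For Step (iii), Theorem~\ref{t.gourmelon} now furnishes a conservative $C^1$-small perturbation $g_1$ exhibiting a homoclinic tangency associated with $\gamma$, with the tangency and the relevant stable and unstable branches contained in an arbitrarily small neighborhood of $\gamma$. An additional small conservative perturbation unfolds the tangency into a transverse homoclinic point, and the classical Birkhoff--Smale horseshoe construction provides a hyperbolic basic set $K$ inside a small neighborhood of $\gamma \cup \{\text{homoclinic orbit}\}$. Because every perturbation has been localized near $\gamma$, which itself is $2\varepsilon$-close to $\Lambda$, the horseshoe $K$ is Hausdorff-close to $\Lambda$, as required. The main obstacle is Step (i): Pugh's classical closing lemma only guarantees a periodic orbit passing near \emph{one} point of $\Lambda$, whereas the theorem demands a Hausdorff approximation of the whole invariant set, and this forces one to apply the stronger conservative connecting lemma along a long orbit segment already dense in $\Lambda$, while ensuring that the closing can be performed without leaving the volume-preserving or symplectic class. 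Once this step is secured, transferring the lack of domination from $\Lambda$ to $\gamma$ via upper semicontinuity, and chaining the perturbative tools developed in this paper, is routine.
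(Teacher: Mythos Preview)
Your three-stage architecture matches the paper's, and Step~(iii) is essentially correct. However, you have misidentified where the real difficulty lies, and your Step~(ii) contains a genuine gap.

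On Step~(i): you are too pessimistic about Pugh's closing lemma. The version in \cite{pugh} (and its conservative extension \cite{PR}) does more than produce a periodic point near a single point of $\Lambda$: it yields a periodic orbit $\cO$ with $\cO\subset \cO(x)\cup U\cup\dots\cup f^{N-1}(U)$ for a small neighborhood $U$ of a chosen point $x$, where $N$ depends only on the diffeomorphism. Taking $x\in\Lambda$ with dense orbit immediately forces $\cO$ to be Hausdorff-close to $\Lambda$. No connecting lemma is needed; the paper proceeds exactly this way.

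The actual obstacle is your Step~(ii). You assert that Proposition~\ref{p.simple}, Theorem~\ref{t.BochiBonatti} and Theorem~\ref{t.angle} make $\gamma$ a hyperbolic saddle, but none of these results does this. Proposition~\ref{p.simple} makes the spectrum simple with arguments in $\pi\QQ$, but it does not move eigenvalues off the unit circle (in the symplectic case elliptic blocks stay elliptic), and in $GL$ or $SL$ it can even be taken to preserve the moduli. Theorems~\ref{t.BochiBonatti} and~\ref{t.angle} both \emph{assume} the cocycle is already hyperbolic. So nothing in your toolkit guarantees that $\gamma$ is of saddle type, which is a hypothesis of Theorem~\ref{t.gourmelon}.

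The paper resolves this separately in the two settings. In the conservative case, after Proposition~\ref{p.simple} the elliptic eigenvalues are roots of unity; linearizing via Theorem~\ref{t.linearize} and passing to a nearby periodic orbit $\widetilde\cO$ of period a suitable multiple of $\pi(\gamma)$, one gets $Df^{\pi(\widetilde\cO)}|_{\widetilde\cO}$ equal to a hyperbolic map times the identity on the former elliptic subspace, which a further small perturbation makes hyperbolic. In the dissipative case, if no small perturbation turns $\gamma$ into a saddle, Pliss's lemma (Proposition~\ref{p.pliss}) forces the average of $\log\|Df^{N_2}\|$ along $\gamma$ to be at most $-\log 2$; letting $\gamma\to\Lambda$ produces an ergodic sink measure on $\Lambda$, contradicting that $\Lambda$ is transitive and infinite.

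Finally, once $\gamma$ (or $\widetilde\cO$) is a saddle, the extra work you do in Step~(ii) is unnecessary: upper semicontinuity already transfers the failure of $N$-domination from $\Lambda$ to the stable/unstable splitting of the periodic orbit, and Theorem~\ref{t.gourmelon} (via Remark~\ref{r.gourmelon}) applies directly, without invoking Theorems~\ref{t.BochiBonatti} or~\ref{t.angle} separately.
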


We point out that {this theorem makes} no genericity assumption on the diffeomorphism $f$.
In particular we do not suppose the existence of periodic orbits, so that the technique of transitions developed in~\cite{BDP}
cannot be used here.

\paragraph{\it Comments and questions.} 
We stress that the extension of the arguments of \cite{BochiBonatti} and others to the conservative and especially to the symplectic setting is not {direct}. 
For instance, we were led to modify Gourmelon's approach to Theorem \ref{t.gourmelon}. {The new argument is somewhat simpler, even in the dissipative setting.}

Our techniques sometimes provide slightly weaker results in the symplectic case than the ones available in the dissipative case. { For instance, consider} a periodic cocycle with a large, {given} period and without strong dominated splitting. Can a small perturbation make: (1)
All eigenvalues real? 
(2) All stable (all unstable) eigenvalues of equal moduli? 

{For (1), we need to assume the cocycle to be hyperbolic,} see Remark~\ref{r.difficulty1}. {For (2),} we need to go to a possibly unbounded multiple of the period, see Remark \ref{r.BochiBonatti}.
\medskip

We also point out that although the perturbations are only small {in the $C^1$-topology}, the resulting diffeomorphism often has the same regularity as the unperturbed system.
{Our results could thus} be used to provide examples and counter-examples with higher regularity in both the dissipative and conservative settings.
\medskip

\noindent{\it Acknowledgements.}
{We are grateful to Nicolas Gourmelon for discussions about these perturbation techniques.}

\section{Preliminaries}\label{s.prelim}

In this section we review some properties of smooth dynamics and state a few fundamental perturbation properties that will be used throughout.

\paragraph{Space of diffeomorphisms.}
Let $M$ be a compact connected boundaryless Riemannian manifold with dimension $d_0$.
The tangent bundle is endowed with a natural distance: considering the Levi-Civita connection,
the distance between $u\in T_xM$ and $v\in T_yM$ is the infimum of $\|u-\Gamma_\gamma v\|+\text{Length}(\gamma)$ (where $\Gamma_\gamma$ denotes the parallel transport) over $C^1$-curves $\gamma$ between $x$ and $y$.
Let $\Diff^1(M)$ denote the space of $C^1$-diffeomorphisms of $M$
and let $d_{C^1}$ be the following usual distance defining the $C^1$-topology:
$$d_{C^1}(f,g)=\sup_{v\in T^1M} \max\big( d(Df (v),Dg (v)), \; d(Df^{-1}(v),Dg^{-1}(v))\big).$$
We say that $g$ is an \emph{$\varepsilon$-perturbation} of $f$ when $d_{C^1}(g,f)<\varepsilon$.%

\paragraph{Hyperbolic periodic points and homoclinic relations.}
Let $f\in \mathrm{Diff}{^r}(M)$ and $p$ be a periodic point for $f$.
We denote by $\pi(p)$ the {(minimal)} period of $p$ and by $\cO(p)=\{f^i(p):p=0,\dots,\pi(p)-1\}$  its orbit.  A periodic point $p$ is \emph{hyperbolic} if $Df^{\pi(p)}(p)$ has no eigenvalues on the unit circle.  In this case there exists a stable manifold of $p$
denoted
$W^s(p)=\{ y\in M\, :\, d(f^{\pi(p)n}(y), p)\to 0, n\to \infty\}$. It is a $C^r$ immersed submanifold tangent to the stable eigenspace at $p$.  There is a similar definition for an unstable manifold.  So we have $T_pM=E^s\oplus E^u$ where $E^s$ is the stable eigenspace for $p$ and $E^u$ is the unstable eigenspace for $p$.  
The periodic point is a \emph{saddle} if the orbit $\cO(p)$ is hyperbolic and is neither a sink nor a source (so both $E^s$ and $E^u$ are nontrivial).

Two hyperbolic periodic orbits $O_1,O_2$ are \emph{homoclinically related} if 
\begin{itemize}
\item[--] $W^s(O_1)$ has a transverse intersection point with $W^u(O_2)$, and 
\item[--] $W^u(O_1)$ has a transverse intersection point with $W^s(O_2)$.
\end{itemize}
Equivalently, the orbits $O_1$ and $O_2$ are both contained in the same horseshoe (i.e., a topologically transitive, $0$-dimensional compact invariant subset which is hyperbolic and locally maximal).

\paragraph{Conservative diffeomorphisms.}
If $\omega$ is a volume or a symplectic form\footnote{We assume that $M$ and $\omega$ are $C^\infty$ smooth, see Remark 2 in \cite{avila}.} on $M$, one denotes by $\Diff^1_\omega(M)$ the subspace of diffeomorphisms which preserve $\omega$.
The charts $\chi \colon U\to \RR^{d_0}$ of $M$ that we will consider will always send $\omega$ on the standard Lebesgue volume
or symplectic form of $\RR^{d_0}$; it is well-known that any point admits a neighborhood with such a chart.

\paragraph{Symplectic linear algebra.}
For $d_0$ even, we write $d_0=2d$ and use the standard symplectic form, 
$$J:=\begin{pmatrix} 0 & I \\ -I & 0 \end{pmatrix}\quad \text{ (where $I=I_d$ is the $d\times d$-identity matrix),}$$
the Euclidean norm on $\RR^{2d}$ and the operator norm on matrices. For $d\times d$-matrices $\BA,\BB,\BC,\BD$,
 \begin{equation}\label{e.symplecticmatrix}
   \begin{pmatrix} \BA & \BB\\ \BC& \BD\end{pmatrix}\in Sp(2d,\RR) \iff
    \left\{\begin{array}{l} \BA^T\BC=\BA\BC^T,\\ \BB^T\BD=\BD^T\BB,\text{ and }\\
    \BA^T\BD-\BC^T\BB=I_d.\end{array}\right. 
 \end{equation}

The \emph{symplectic complement} $E^\omega$ of a subspace $E\subset \RR^{2d}$ is the linear subspace of vectors $v$ such that
$\omega(v,u)=0$ for all $u\in E$.

A subspace $E\subset \RR^{2d}$ is \emph{symplectic} if the restriction $\omega|E\times E$ of the symplectic form is non-degenerate (i.e. symplectic).
Two symplectic subspaces $E,E'$ are \emph{$\omega$-orthogonal} if for any $u\in E$, $u'\in E'$ one has $\omega(u,u')=0$.

A $d$-dimensional subspace $E$ is \emph{Lagrangian} if the restriction of the symplectic form vanishes, i.e. $E^\omega=E$.
Obviously, the stable (resp. unstable) space of a linear symplectic map is Lagrangian.

We now state and prove a few simple results we will need later.

\begin{lemma}\label{l.change-basis}
For any $\varepsilon>0$, there exists $\delta>0$ such that
if $(e_1,\dots,e_{d})$ is $\delta$-close to the first $d$ vectors of the standard basis $(E_1,\dots,E_{2d})$ of
$\RR^{2d}$ and generates a Lagrangian space,
then there exists $A\in Sp(2d,\RR)$ that is $\varepsilon$-close to the identity
such that $A(e_i)=E_i$ for $i=1,\dots,d$.
\end{lemma}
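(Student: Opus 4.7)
The natural plan is to complete $(e_1,\dots,e_d)$ to a symplectic basis $(e_1,\dots,e_d,f_1,\dots,f_d)$ of $\RR^{2d}$ with vectors $f_j$ close to $E_{d+j}$, and then let $A\in Sp(2d,\RR)$ be the linear map sending $e_i\mapsto E_i$ and $f_j\mapsto E_{d+j}$. Closeness of the basis to $(E_1,\dots,E_{2d})$ will force $A$ to be close to the identity.

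For the completion, first form the $d\times d$ matrix $M_{ij}=\omega(e_i,E_{d+j})$. Since $e_i=E_i+O(\delta)$, one has $M=I+O(\delta)$, so $M$ is invertible for $\delta$ small with $M^{-1}=I+O(\delta)$. Define provisional dual vectors
\[
   f_j':=\sum_{k=1}^d (M^{-1})_{kj}\, E_{d+k},
\]
so that $f_j'=E_{d+j}+O(\delta)$ and a direct computation gives $\omega(e_i,f_j')=\delta_{ij}$. Because $(e_i)$ is Lagrangian, the remaining symplectic conditions already hold except possibly $\omega(f_i',f_j')=0$. The antisymmetric matrix $B_{ij}:=\omega(f_i',f_j')$ is itself $O(\delta)$ (since the $f_j'$ are $O(\delta)$-close to $E_{d+j}$ and $\omega(E_{d+i},E_{d+j})=0$). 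Correct the $f_j'$ by
\[
   f_j:=f_j'-\tfrac12\sum_{k=1}^d B_{jk}\, e_k.
\]
The correction is $O(\delta)$, so still $f_j=E_{d+j}+O(\delta)$; furthermore $\omega(e_i,f_j)=\omega(e_i,f_j')=\delta_{ij}$ (the extra term vanishes by Lagrangianity), and the antisymmetry of $B$ together with $\omega(e_k,f_\ell')=\delta_{k\ell}$ yields $\omega(f_i,f_j)=B_{ij}-B_{ij}=0$. Thus $(e_1,\dots,e_d,f_1,\dots,f_d)$ is a symplectic basis.

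Now let $A^{-1}$ be the linear map determined by $A^{-1}(E_i)=e_i$ and $A^{-1}(E_{d+j})=f_j$. By construction it sends the standard symplectic basis to a symplectic basis, hence $A^{-1}\in Sp(2d,\RR)$, and the matrix of $A^{-1}$ differs from the identity by a block whose columns are $e_i-E_i$ and $f_j-E_{d+j}$, both $O(\delta)$. Hence $A^{-1}=I+O(\delta)$ and consequently $A=I+O(\delta)$. Choosing $\delta$ small enough (depending only on $\varepsilon$ and $d$) makes $\|A-I\|<\varepsilon$, and $A(e_i)=E_i$ by construction.

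The only delicate point is tracking that each step is genuinely $O(\delta)$: the inversion of $M=I+O(\delta)$, the computation of $B$ from the $f_j'$, and the correction term $\tfrac12\sum B_{jk}e_k$. None of these require the full Lagrangian hypothesis except where explicitly noted (to kill $\omega(e_k,e_\ell)$ contributions), so the argument is essentially symplectic Gram--Schmidt with quantitative estimates; no genuine obstacle arises beyond organising the bookkeeping.
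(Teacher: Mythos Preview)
Your proof is correct. The symplectic Gram--Schmidt argument works exactly as you describe, and the $O(\delta)$ bookkeeping is sound; in particular the verification that $\omega(f_i,f_j)=0$ after the correction is the only place requiring care, and your computation using the antisymmetry of $B$ is right.

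The paper's argument is different and somewhat shorter: rather than completing $(e_1,\dots,e_d)$ to a full symplectic basis, it writes the $2d\times d$ matrix $M$ of the $e_j$ in block form $\binom{\BA}{\BC}$ and observes directly that $\begin{pmatrix}\BA&0\\\BC&(\BA^T)^{-1}\end{pmatrix}$ is symplectic (the block condition $\BA^T\BC=\BC^T\BA$ is exactly the Lagrangian hypothesis $\omega(e_i,e_j)=0$). This matrix sends $E_i$ to $e_i$, is visibly $I+O(\delta)$, and its inverse is the required $A$. The paper's approach exploits the block characterization of $Sp(2d,\RR)$ to avoid any Gram--Schmidt correction step; your approach instead builds the dual Lagrangian explicitly, which is longer but perhaps makes the role of the Lagrangian hypothesis more transparent (it is used twice, once to kill $\omega(e_i,e_k)$ terms in each of your two verifications). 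Both are entirely elementary.
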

\begin{proof}
Let $M=(m_{ij})_{1\leq i\leq 2d,1\leq j\leq d}$ be the matrix defined such that  $e_j=\sum_{i=1}^{2d} m_{ij}E_i$ for $j=1,\dots,d$. Define a $2d\times 2d$ matrix as:
 $$
   \begin{pmatrix}\BA& 0\\ \BC&\BD\end{pmatrix} \text{ such that }\begin{pmatrix} \BA \\ \BC\end{pmatrix}=M
    \text{ and }\BD=(\BA^T)^{-1}.
  $$
This matrix sends $E_i$ to $e_i$ for $1\leq i\leq d$ and is symplectic since $(\BA^T\BC-\BC^T\BA)_{ij}=\omega(e_i,e_j)=0$. Hence its inverse has the claimed properties.
\end{proof}

\begin{corollary}\label{c.change-basis2}
There exists $C>0$ (only depending on $d$)
such that for any Lagrangian space $L\subset \RR^{2d}$,
there exists $A\in Sp({2d},\RR)$
satisfying
$$A(L)=\RR^{d}\times \{0\}^{d_0} \text{ and }
\|A\|,\|A^{-1}\|<C.$$
\end{corollary}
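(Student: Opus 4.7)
The plan is to produce, for any Lagrangian $L\subset \RR^{2d}$, a matrix $A\in Sp(2d,\RR)\cap O(2d)$ satisfying $A(L)=\RR^d\times\{0\}^d$. Since every element of $O(2d)$ has operator norm $1$, this immediately gives $\|A\|=\|A^{-1}\|=1$, so the corollary holds with any constant $C>1$.

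To construct $A^{-1}$, I would first pick an orthonormal basis $(e_1,\dots,e_d)$ of $L$ for the standard Euclidean inner product on $\RR^{2d}$, and then set $f_i:=-Je_i$. I claim that $(e_1,\dots,e_d,f_1,\dots,f_d)$ is simultaneously an orthonormal basis of $\RR^{2d}$ and a symplectic basis, i.e. $\omega(e_i,e_j)=\omega(f_i,f_j)=0$ and $\omega(e_i,f_j)=\delta_{ij}$. Granting this, the matrix $A^{-1}$ whose columns are $(e_1,\dots,e_d,f_1,\dots,f_d)$ lies in $O(2d)\cap Sp(2d,\RR)$ and sends the standard basis to this new basis, so $A(e_i)=E_i$ and $A(L)=\RR^d\times\{0\}^d$, as desired.

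The verification rests on the three identities $J^T=-J$, $J^2=-I$, and $\omega(u,v)=u^TJv$, together with the Lagrangian hypothesis $\omega|_{L\times L}\equiv 0$. Orthonormality of the $f_i$ follows from $\langle Je_i,Je_j\rangle=\langle e_i,e_j\rangle$ (since $J$ is orthogonal), and $e_i\perp f_j$ because $\langle e_i,-Je_j\rangle=-\omega(e_i,e_j)=0$. Lagrangianity of $\operatorname{span}(f_i)$ follows similarly from $\omega(Je_i,Je_j)=\omega(e_i,e_j)=0$. Finally, the pairing computation $\omega(e_i,-Je_j)=-e_i^T J^2 e_j=e_i^T e_j=\delta_{ij}$ gives the remaining symplectic relation.

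The main obstacle is purely bookkeeping: all that can go wrong is a sign error in one of the short computations above. An alternative route would use Lemma \ref{l.change-basis} together with compactness of the Lagrangian Grassmannian (covering it by finitely many neighborhoods on which a symplectic trivialization is available), but this would yield only a non-explicit constant $C$ and rely on transitivity of $Sp(2d,\RR)$ on the Lagrangian Grassmannian, which is essentially proved by the construction above anyway.
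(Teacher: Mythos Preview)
Your argument is correct and in fact sharper than the paper's. You exploit the classical fact that the unitary group $Sp(2d,\RR)\cap O(2d)$ already acts transitively on the Lagrangian Grassmannian, which lets you take any $C>1$ and makes the bound explicit. The verifications you give are all fine; the only point worth noting is that the paper never fixes the convention $\omega(u,v)=u^TJv$, but whichever sign one adopts your computations go through after at most replacing $-J$ by $J$ in the definition of the $f_i$.

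The paper instead argues abstractly: it invokes transitivity of $Sp(2d,\RR)$ on Lagrangians (referring back to the construction in Lemma~\ref{l.change-basis}) and then uses compactness of the Lagrangian Grassmannian together with Lemma~\ref{l.change-basis} to patch local symplectic trivializations into a uniform bound. This yields only a non-explicit $C$ and, as you observe in your final paragraph, ultimately rests on essentially the same linear-algebra fact you prove directly. Your route is shorter and gives more; the paper's route has the minor advantage of reusing Lemma~\ref{l.change-basis} rather than introducing a separate computation.
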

\begin{proof}
It is well-known that the symplectic group acts transitively on the Lagrangian spaces (by a variation of the preceding proof).
The claims follows from Lemma \ref{l.change-basis} and the compactness of the set of Lagrangian spaces.
\end{proof}

\paragraph{Perturbative tools.}  We will use the following definitions.

\begin{definition}\label{def-evNpert}
Consider $f\in\Diff^1(M)$, a finite set $X\subset M$, a neighborhood $V$ of $X$, and $\varepsilon>0$.
A diffeomorphism $g$ is an \emph{$(\varepsilon, V, X)$-perturbation} of $f$ if $d_{C^1}(f,g)<\varepsilon$ and $g(x)=f(x)$ for all $x$ outside of $V\setminus X$.
\end{definition}

\begin{definition}
Given $f\in \Diff^1(M)$, a periodic point $p$ of period $\ell =\pi(p)$ and $\varepsilon>0$,
an \emph{$\varepsilon$-path of linear perturbations at $\cO(p)$}
is a family of $\ell$ paths $(A_{i}(t))_{t\in[0,1]}$, $1\leq i\leq \ell$, of linear maps $A_i(t)\colon T_{f^i(p)}M\to T_{f^{i+1}(p)}M$
satisfying:
\begin{itemize}
\item[--] $A_{i}(0)=Df(f^i(p))$,
\item[--] $\sup_{t\in [0,1]} \big(\max{\|Df(f^i(p))-A_{i}(t)\|, \|Df^{-1}(f^i(p))-A^{-1}_{i}(t)\|}\big)<\varepsilon$.
\end{itemize}
\end{definition}
\medskip

The following folklore result modifies the image of one point.

\begin{proposition}\label{p.elementary}
For any $C,\varepsilon>0$,
there is $\eta>0$ with the following property.
For any $f\in \Diff^1(M)$ such that $Df$, $Df^{-1}$ are bounded by $C$,
for any pair of points $x,y$ such that $r:=d(x,y)$ is small enough,
one can find an $(\varepsilon, B(x,r/\eta),\emptyset)$-perturbation $g$ of $f$ satisfying $g(x)=f(y)$.
Furthermore, if $f$ preserves a volume or a symplectic form, one can choose $g$ to preserve it.
\end{proposition}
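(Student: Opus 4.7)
The plan is to set $g = f \circ h$ where $h$ is a diffeomorphism of $M$ supported in $B(x, r/\eta)$, sending $x$ to $y$, $C^1$-close to the identity, and preserving $\omega$ when required. Then $g(x) = f(h(x)) = f(y)$ and $g \equiv f$ outside $\supp h$ automatically, while the $C^1$-smallness of $h - \Id$ combined with $\|Df\|, \|Df^{-1}\| \le C$ will give $d_{C^1}(g,f) < \varepsilon$.

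Work in a chart $\chi$ around $x$ sending $x$ to $0$; in the conservative case, use Darboux's or Moser's theorem to further arrange that $\chi$ sends $\omega$ to the standard form. For $r$ small enough, $v := \chi(y)$ is defined with $|v|$ comparable to $r$. Fix a smooth bump $\phi_R \colon \RR^{d_0} \to [0,1]$ equal to $1$ on $B(0, R/2)$, with $\supp \phi_R \subset B(0, R)$ and $\|\nabla \phi_R\| \lesssim 1/R$, where $R := r/\eta$.

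In the dissipative case, set $h(z) = z + \phi_R(z)v$ in the chart; then $h(0) = v$, $h$ is the identity outside $B(0,R)$, and $\|Dh - I\| \lesssim |v|/R = \eta$, so $h$ is a diffeomorphism. In the conservative case, let $h$ be the time-$1$ flow of a compactly supported $\omega$-preserving vector field $X$ on $\RR^{d_0}$ equal to the constant vector $v$ on $B(0, R/2)$. Symplectically, take $X = X_H$ for the Hamiltonian $H(z) = \phi_R(z) \langle Jv, z\rangle$; for pure volume preservation, after a rotation so that $v$ points along the first axis, use the planar Hamiltonian $H(z) = \phi_R(z) \cdot |v| \cdot z_2$, producing the divergence-free $X = (\partial_{z_2}H, -\partial_{z_1}H, 0, \ldots, 0)$. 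A direct estimate gives $\|X\|_\infty \lesssim r$ and $\|DX\|_\infty \lesssim \eta$, so $\supp h \subset B(0,R)$, $\|Dh - I\| \lesssim \eta$, and $h(0) = v$ because the flow on $B(0, R/2)$ is the translation $z \mapsto z + tv$ (the trajectory from $0$ remains in $B(0, R/2)$ for $\eta < 1/2$).

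Pulling $h$ back to $M$, extended by the identity outside the chart, yields a diffeomorphism supported in $B(x, r/\eta)$ (the bi-Lipschitz constant of $\chi$ is absorbed into $\eta$). Set $g := f \circ h$. Then
\[
\|Dg(z) - Df(z)\| \leq \|Df(h(z))\| \cdot \|Dh(z) - I\| + \|Df(h(z)) - Df(z)\| \lesssim C\eta + o_r(1),
\]
the last term tending to $0$ as $r \to 0$ by uniform continuity of $Df$; an analogous bound holds for $Dg^{-1} - Df^{-1}$. Picking $\eta$ small (depending only on $C, \varepsilon$) absorbs $C\eta$ into $\varepsilon/2$, and then $r$ small (depending on $f$) handles the remainder. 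The principal difficulty is the conservative construction of $h$, where $\omega$-preservation must coexist with support in $B(x, r/\eta)$ and $C^1$-smallness of order $\eta$; the Hamiltonian trick handles this because a cutoff (bi)linear Hamiltonian produces a vector field of size $O(r)$ and derivative $O(\eta)$, which are exactly the required bounds.
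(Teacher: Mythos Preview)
Your proof is correct and follows the standard approach: write $g=f\circ h$ with $h$ a compactly supported near-identity diffeomorphism moving $x$ to $y$, built in a Darboux/Moser chart as the time-$1$ flow of a cutoff Hamiltonian (or divergence-free) vector field that equals the constant $v$ on the inner ball. The estimates $\|X\|_\infty=O(r)$ and $\|DX\|_\infty=O(\eta)$ are exactly what is needed, and the splitting of $\|Dg-Df\|$ into a $C\eta$ term (controlled by the choice of $\eta$) plus a modulus-of-continuity term (controlled by taking $r$ small, which the statement allows to depend on $f$) is the right decomposition.

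The paper itself gives no proof of this proposition, calling it a ``folklore result''; your argument is precisely the kind of elementary construction that justifies that label. Two minor remarks: (i) make explicit that $\phi_R(z)=\phi(z/R)$ for a fixed bump $\phi$, so that the second derivative bound $\|D^2\phi_R\|\lesssim 1/R^2$ used implicitly in the $\|DX\|$ estimate is available; (ii) the uniformity of the chart's bi-Lipschitz constant (needed so that $\eta$ depends only on $C,\varepsilon$ and not on $x$) follows from the compactness of $M$---worth one sentence.
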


\paragraph{Periodic cocycles.}
Let $G$ be a subgroup of $GL(d_0,\RR)$. A \emph{periodic cocycle} with period $\ell$ is an integer $\ell\geq1$ together with a sequence $(A_i)_{i\in \ZZ}$ in $G$ such that $A_{i+\ell}=A_i$ for all $i\in\ZZ$.  The \emph{eigenvalues (at the period)} are the eigenvalues of $\cA:=A_{\ell}\dots A_{1}$. The cocycle is \emph{hyperbolic} if $\cA$ has no eigenvalue on the unit circle. The cocycle is \emph{bounded by $C>1$} if $\max (\|A_i\|,\|A_i^{-1}\|)\leq C$ for $1\leq i\leq \ell$. An \emph{$\varepsilon$-path of perturbations} is a family of periodic cocycles $(A_i(t))_{t\in [0,1]}$ in $G$ such that
$A_{i}(0)=A_i$ and
$\max{\|A_i-A_{i}(t)\|, \|A_i^{-1}-A^{-1}_{i}(t)\|}<\varepsilon$
for each $t\in [0,1]$.

\section{Franks' lemma}
We will use two strengthening of the classical Franks' lemma: we not only perturb the differential but also linearize on a neighborhood through a localized perturbation that keep the diffeomorphism conservative if it was so (Theorem \ref{t.linearize}) and even keep a homoclinic orbit if the periodic orbit stays hyperbolic (Theorem \ref{l.gourmelon}).

\subsection{Linearization}\label{s.linear}

\begin{theorem}[Franks' lemma with linearization]\label{t.linearize}
Consider $f\in \Diff^1(M)$, $\varepsilon>0$ small, a finite set $X\subset M$ and a chart $\chi\colon V\to \RR^{d_0}$ with $X\subset V$.
For $x\in X$, let $A_x\colon T_xM\to T_{f(x)}M$ be a linear map such that 
$$\max(\|A_x-Df(x)\|, \|A^{-1}_x-Df^{-1}(x)\|)<\varepsilon/2.$$
Then there exists an $(\varepsilon, V,X)$-perturbation $g$ of $f$ such that for each $x\in X$ the map $\chi\circ g \circ \chi^{-1}$ is linear in a neighborhood of $\chi(x)$ and
$Dg(x)=A_x$. Moreover if $f$ preserves a volume or a symplectic form, one can choose $g$ to preserve it also.
\end{theorem}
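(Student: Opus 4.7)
The plan is to reduce to a single point via disjointness, use the chart to move the problem to $\RR^{d_0}$, and then modify $f$ inside a small ball while respecting the conservative structure. Since $X$ is finite, disjoint small balls may be chosen around its points and the perturbations performed one at a time; so I may suppose $X = \{x_0\}$. Pulling back through $\chi$, and also through a similar chart around $f(x_0)$ sending $\omega$ to the standard form (available by Section \ref{s.prelim}), the problem is transported to $\RR^{d_0}$ with $x_0 = 0$ and $\omega$ standard. Set $L := A_{x_0}$ and $T(y) := f(x_0) + L y$; we may assume $L$ preserves $\omega$, correcting within $O(\varepsilon)$ if needed. The theorem will follow once, for some small $r > 0$, we produce an $\omega$-preserving $C^1$ diffeomorphism $\tilde H$ that equals the identity on $B(0, r/2)$, equals $H := T^{-1} \circ f$ outside $B(0, r)$, and is $C^1$-close to $H$. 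Indeed, $g := T \circ \tilde H$ on $V$ extended by $f$ outside is the required perturbation, and by construction $H$ preserves $\omega$, fixes $0$, and has $DH(0)$ within $O(\varepsilon)$ of $\Id$.

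The construction of $\tilde H$ splits according to the setting. In the dissipative case, the classical recipe
$\tilde H(y) := y + (1 - \rho(|y|/r))(H(y) - y)$,
with $\rho$ a smooth cutoff equal to $1$ on $[0, 1/2]$ and vanishing on $[1, \infty)$, is $C^1$-close to $H$ uniformly in $r$, because $H(y) - y = O(|y|)$ with constant controlled by $\|DH - \Id\|_{C^0}$. In the symplectic case the interpolation is performed at the level of a generating function: for $H$ close enough to $\Id$ one has $S(q, P) = q \cdot P + \sigma(q, P)$ with $\sigma$ small in $C^2$, and $\tilde H$ is defined from $\tilde S := q \cdot P + (1 - \rho(|(q, P)|/r)) \sigma$; the resulting $\tilde H$ is automatically symplectic, equals $\Id$ on a small ball, equals $H$ outside a slightly larger ball, and is $C^1$-close to $H$. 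In the volume-preserving case I start from the dissipative $\tilde H_0$ above, whose Jacobian $1 + \alpha$ satisfies $|\alpha| = O(\varepsilon)$, is supported in the annulus $A := B(0, r) \setminus B(0, r/2)$, and integrates to $0$ over $A$ (since $\tilde H_0(B(0, r))$ coincides with $H(B(0, r))$ and hence has volume $\vol(B(0, r))$, while $\tilde H_0 = \Id$ on $B(0, r/2)$). Moser's path method on $A$ then produces a volume-preserving $\phi$ supported in $A$ with $\det D\phi = 1 + \alpha$, after which $\tilde H := \tilde H_0 \circ \phi^{-1}$ solves the problem.

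The main technical hurdle is to keep the Moser correction $C^1$-small uniformly as $r \to 0$. Rescaling $y \mapsto y/r$ turns the step into solving $\nabla \cdot X = \tilde\alpha$ on a fixed annulus with $X$ vanishing on the boundary, where $\tilde\alpha = O(\varepsilon)$ in $C^0$ has vanishing integral; a Bogovskii-type explicit formula (after a mild mollification of $\alpha$, permissible since only $C^1$-closeness of $g$ is required) yields $X$ of $C^1$-size $O(\varepsilon)$, and rescaling back gives a $\phi$ that is $C^1$-close to $\Id$ with size independent of $r$. The corresponding check in the symplectic case — existence and $C^1$-continuous dependence of the generating function for $H$ near $\Id$ — is a routine implicit function theorem argument, and the cutoff of $\sigma$ yields a $C^1$-small change in $\tilde H$. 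Assembled together, the resulting $g$ is supported in $V \setminus X$, is $\varepsilon$-$C^1$-close to $f$, is affine in chart coordinates on a neighborhood of $x_0$ with $Dg(x_0) = L$, and preserves $\omega$ whenever $f$ does.
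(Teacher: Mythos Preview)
Your outline is close in spirit to the paper's and the dissipative and symplectic cases are fine, but the volume-preserving argument has a genuine regularity gap. Since $f$ is only $C^1$, your interpolant $\tilde H_0$ is only $C^1$, so the Jacobian defect $\alpha=\det D\tilde H_0-1$ is merely $C^0$. The Moser/Dacorogna--Moser step you invoke requires solving $\nabla\cdot X=\alpha$ with $X$ controlled in $C^1$, and there is no $C^0\to C^1$ estimate for the divergence equation (this is the standard failure of Schauder-type bounds at integer regularity; a Bogovskii operator gives $L^p\to W^{1,p}$ or $C^{0,\beta}\to C^{1,\beta}$, not $C^0\to C^1$). Your proposed fix, ``a mild mollification of $\alpha$'', does not work: if you replace $\alpha$ by a smoothed $\alpha_\delta$ and build $\phi$ with $\det D\phi=1+\alpha_\delta$, then $\tilde H_0\circ\phi^{-1}$ has Jacobian $(1+\alpha)/(1+\alpha_\delta\circ\phi^{-1})\ne 1$, so the resulting $g$ is \emph{not} volume-preserving. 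The requirement that $g$ preserve $\omega$ is exact, not up to $\varepsilon$.

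The paper confronts exactly this difficulty and resolves it by inserting a preliminary \emph{regularization} step: before any interpolation, one uses Avila's conservative smoothing theorem to replace $f$ by a $C^1$-close, volume-preserving diffeomorphism that is $C^\infty$ on a neighborhood of $x_0$ (restoring $f(x_0)$ by a further small perturbation). Once the local picture is $C^\infty$, the interpolation and the Moser-type correction (packaged as Avila's ``pasting'' Corollary) go through in $C^\infty$ with the needed $C^1$ control. The paper then reaches the prescribed $A_{x_0}$ not in one shot but by walking along a path $A_t$ in small steps, repeating the construction on nested balls; this is needed because the correction lemma only handles data that is $C^k$-close to the identity for some large $k$, so each step can only move the differential a little. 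If you want to salvage your approach, the missing ingredient is precisely such a conservative $C^1\to C^\infty$ local regularization; without it, the Moser correction cannot be made $C^1$-small (or even $C^1$).
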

The dissipative case follows from a variation on Franks' original proof~\cite{franks}.
The symplectic case is obtained by a standard
argument involving generating functions (see for instance~\cite{Zehnder}).
The volume-preserving case requires an additional argument for which we will use two results from \cite{avila}.\footnote{
We note that a proof of Theorem~\ref{t.linearize} in the volume-preserving case has appeared \cite{teixera} while we were finishing this text.}

\begin{proof}[Proof in the volume-preserving case]
We assume that $V$ is an open subset of $\RR^{d_0}$ and suppose that $X=\{0\}\subset\RR^{d_0}$ and $Df(0)$ is the identity. We leave the reduction of the general case to this situation to the diligent reader.

\smallbreak\noindent\emph{Regularization.} For any open neighborhood $V'$ of $X$ with $\overline{V'}\subset V$, Theorem~7 of \cite{avila} (applied with $K_0=\emptyset$) gives a conservative, arbitrarily small $C^1$-perturbation with support in  $V$ and $C^\infty$ on $V'$. The image $f(0)$ may have changed but can be restored by a composing with a conservative perturbation of the identity (e.g., built using again Theorem 7 of \cite{avila}). Thus we can assume that $f$ is $C^\infty$ on $V'$.

\smallbreak\noindent\emph{Rectification.} 
We denote by $B_r$ the open ball with center $0\in\RR^{d_0}$ and radius $r>0$ and let 
 $$K:=(\overline{B_3}\setminus B_2)\cup \overline{B_1}\; \; \subset \; \;U:=(B_{3.1}\setminus\overline{B_{1.9}})\cup B_{1.1}.$$ 
 Let  $\eta:=\varepsilon/4(\|Df\|_\infty+1)$
where $\|\cdot\|_\infty$ is the supremum norm.  Corollary 5 of \cite{avila} yields:

\begin{lemma}\label{lemma-rectification}
There are an integer $k \geq 1$
and arbitrarily small number $\tau>0$ with the following property for $h\in C^\infty(B_{3.1},\RR^{d_0})$. 

If  $\|h-Id\|_{C^k}<\tau$ and $h|U$ is volume-preserving with {$vol(h(B_2\setminus \overline B_1))=vol(B_2\setminus \overline B_1)$}, then there is a volume-preserving map $\tilde h\in C^\infty(B_{3.1},\RR^{d_0})$ with $d_{C^1}(\tilde h,\Id)<\eta$
and $\tilde h=h$ on $K$.
\end{lemma}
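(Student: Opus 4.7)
The plan is to apply a Moser--Dacorogna rectification (which is essentially Corollary~5 of \cite{avila}) on the annular region $\Omega := B_2 \setminus \overline{B_1}$, precisely the ``gap'' in $K$ where $h$ may fail to be volume-preserving. First I would observe that the hypothesis that $h|U$ is volume-preserving forces the Jacobian $J := \Jac(h)$ to satisfy $J \equiv 1$ on $U$, and in particular in a neighborhood of $\partial\Omega = \partial B_1 \cup \partial B_2$. The volume identity $\mathrm{vol}(h(\Omega)) = \mathrm{vol}(\Omega)$ rewrites via change of variables as $\int_\Omega (J-1)\,dx = 0$, which is the classical compatibility condition needed by Moser's construction.

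Second, I would apply a quantitative Moser/Dacorogna theorem to produce a smooth diffeomorphism $\psi$ of $\Omega$, equal to the identity in a neighborhood of $\partial\Omega$, with $\Jac(\psi) = J|_\Omega$, and satisfying an estimate of the form $\|\psi - \Id\|_{C^1} \leq C\,\|J-1\|_{C^{k-1}}$ for some universal constant $C$ and some integer $k$. One extends $\psi$ by the identity to all of $B_{3.1}$ and sets $\tilde h := h \circ \psi^{-1}$. Since $K \cap \Omega = \emptyset$, the map $\psi$ is the identity on a neighborhood of $K$, so $\tilde h = h$ on $K$. On $\Omega$ the chain rule yields $\Jac(\tilde h) = (\Jac(h)\circ \psi^{-1})/(\Jac(\psi)\circ\psi^{-1}) = 1$, while on $B_{3.1} \setminus \overline\Omega \subset U$ one has $\tilde h = h$ and $h$ is already volume-preserving. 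Hence $\tilde h$ is globally volume-preserving and agrees with $h$ on $K$.

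Third, to obtain $d_{C^1}(\tilde h, \Id) < \eta$, note that $\|h - \Id\|_{C^k} < \tau$ controls $\|J-1\|_{C^{k-1}}$ linearly, hence the Moser estimate forces $\|\psi - \Id\|_{C^1}$ to be as small as desired by taking $\tau$ small. Combining with $\|h - \Id\|_{C^1} < \tau$ and applying the chain rule, one obtains $d_{C^1}(\tilde h, \Id) < \eta$ provided $\tau$ is chosen small enough in terms of $\eta$, $k$, $C$, and the $C^1$-bound on $f$. Both $k$ and $\tau$ are then fixed as functions of the data of the main statement.

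The main obstacle is not the algebraic construction but the quantitative $C^1$-control on the Moser solution: elementary proofs of Moser's theorem yield only existence. The required estimate, typically obtained by solving a divergence equation $\mathrm{div}\,X = J - 1$ with zero Dirichlet data on $\partial\Omega$ via elliptic regularity and then integrating the time-dependent vector field, is exactly the nontrivial technical content of Corollary~5 of \cite{avila}; the integer $k$ in the statement reflects the regularity loss inherent in these elliptic estimates, and the need for $\tau$ to be arbitrarily small matches the smallness of $\eta$ we require.
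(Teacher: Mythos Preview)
Your proposal is correct and matches the paper's approach: the paper does not give an independent proof of this lemma but simply states it as a direct consequence of Corollary~5 of \cite{avila}, which is precisely the quantitative Moser--Dacorogna rectification you invoke. Your expansion---solving $\Jac(\psi)=J$ on the annulus $\Omega=B_2\setminus\overline{B_1}$ with $\psi=\Id$ near $\partial\Omega$, extending by the identity, and setting $\tilde h=h\circ\psi^{-1}$---is exactly the mechanism behind that corollary, and your verification that $B_{3.1}\setminus\overline{\Omega}\subset U$ and $K\cap\Omega=\emptyset$ is the bookkeeping needed to see that the cited result applies here.
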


\smallbreak\noindent\emph{Linearization.}
{Let us fix a map $a\in C^\infty(\RR,[0,1])$ with $a(r)=0$ if $r\leq 1.1$, $a(r)=1$ if $r\geq 1.9$.}
For $\rho>0$ small enough, $f_\rho(x):=\rho^{-1} f(\rho x)$ is well defined and arbitrarily $C^\infty$-close to the identity  on $B_{3.1}$. Define $h:B_{3.1}\to\RR^{d_0}$ by: 
  $$
       h(x) = (1-a(\|x\|))x+a(\|x\|)f_\rho(x).
  $$
 $h(B_2)=f_\rho(B_2)$, $h(B_1)=B_1$. Moreover, $\|h-Id\|_{C^k}\leq C(k)\|f_\rho-Id\|_{C^k}$ where $C(k)$ only depends on $k$ and $d_0$. Thus $\|h-Id\|_{C^k}<\tau$ for {$\rho>0$
small enough.}

Lemma \ref{lemma-rectification} yields a volume-preserving map $\tilde h:B_{3.1}\to\RR^{d_0}$ with $d_{C^1}(\tilde h,\Id)<\varepsilon/4$ that coincides with $f_\rho$ on $\overline{B_3}\setminus B_2$ and with the identity on $\overline{B_1}$. Replacing $f$ on $B_{3\rho}$ by {$\tilde h_{1/\rho}:=\rho\tilde h(./\rho)$}, we get a conservative $C^1$-diffeomorphism $\tilde f$ such that $\tilde f=Df(0)$ near $0$ and, for $\rho>0$ small enough,
 $$
    d_{C^1}(\tilde f,f) \leq d_{C^1}(\tilde h,\Id)+ d_{C^1}(\Id,f|B_{3.1\rho}) < \varepsilon/3.
 $$
This proves the theorem in the case $A_0=Df(0)=\Id$.

\smallbreak\noindent\emph{Perturbation of the differential.} We again use Lemma \ref{lemma-rectification}. As this procedure only effects small changes, one chooses a path $(A_t)_{0\leq t\leq 1}$  with 
$$\max(\|A_t-A_x\|, \|A_t^{-1}-A_x^{-1}\|)<2\varepsilon/3,\quad \text{ for any } t\in [0,1].$$ 
Choosing $N$ large enough,
one sets $A_i:=A_{i/N}$ for $i=0,\dots,N$, so that:
 $$
   \|A_i-A_{i+1}\|< \frac{\tau}{C(k)(\|Df^{-1}\|_\infty+1)}.
 $$
We are going to define $u_i\in\Diff^\infty(M)$ such that $d_{C^1}(u_i,f)<\varepsilon$ and $u_i|B_{3^{-i}\rho}=A_i$. We take $u_0=\tilde f$. For $0\leq i<N$, we build $u_{i+1}$ from $u_i$ by replacing it on $B_{3^{-i}\rho}$ by $A_{i+1}\circ\tilde h_{3^{-i+1}\rho}$ where $\tilde h$ given by the lemma starting from
 $$
    h(x):=(1-a(\|x\|))x+a(\|x\|)A_{i+1}^{-1}A_i(x)$$
    on $B_{3.1}.$
Indeed,
 $
   \|h-\Id\|_{C^k} \leq C(k)\|A_{i+1}^{-1}A_i-\Id\| \leq C(k)\|A_{i+1}^{-1}\|\cdot\|A_i-A_{i+1}\|< \tau.
 $
Thus, for $\tau>0$ small enough:
$$\begin{aligned}
   d_{C^1}(A_{i+1}\tilde h,A_{i}) &\leq \|A_{i+1}\|\cdot d_{C_1}(\tilde h,\Id) 
   + \|A_{i+1}\|\cdot\|A_{i+1}^{-1}\|\cdot \|A_{i+1}-A_i\|\\
   &
   < \frac{ (6/5)\|Df(0)\|\cdot \varepsilon}{4(\|Df\|_\infty+1)} +  \frac{ (36/25)\|Df(0)\|\cdot\|Df(0)^{-1}\| \cdot \tau }{C(k)(\|Df^{-1}\|_\infty+1)} <  \varepsilon/3
  \end{aligned}$$
so 
 $
   d_{C^1}(u_{i+1},f)\leq \max(d_{C^1}(u_i,\Id),\varepsilon/3+d_{C_1}(f|B_\rho,A_i))<\varepsilon,
   $ 
completing the induction.
Thus $g=u_N$ satisfies the claims of the theorem.
\end{proof}

\subsection{Homoclinic connections}\label{s.connection}

The next result further strengthens the linearizing version of Franks' lemma when the periodic orbit is kept hyperbolic: the perturbation preserves a given homoclinic relation. It has been
proved in~\cite{Gourmelon2014} in the dissipative case.

\begin{theorem}[Franks' lemma with homoclinic connection]\label{l.gourmelon}
Let  $f\in \Diff^1(M)$ and $\varepsilon > 0$ small.
Consider:
\begin{itemize}
\item[--] a hyperbolic periodic point $p$ of period $\ell =\pi(p)$,
\item[--] a chart $\chi\colon V\to \RR^{d_0}$ with $\cO(p)\subset V$,
\item[--] a hyperbolic periodic point $q$ homoclinically related to $\cO(p)$, and
\item[--] an $\varepsilon/2$-path of linear perturbations $(A_{i}(t))_{t\in[0,1]}$, $1\leq i\leq \ell$, at $\cO(p)$ such that the composition $A_{\ell}(t)\circ \dots A_{1}(t)$ is hyperbolic for each $t\in [0,1]$.
\end{itemize}
Then there exists an $(\varepsilon, V,\cO(p))$-perturbation $g$ of $f$ such that, for each $i$ the map
 $\chi\circ g\circ \chi^{-1}$ is linear and coincides with $A_{i}(1)$ near $f^i(p)$, and $\cO(p)$ is still homoclinically related to $q$.
\smallskip
Moreover if $f$ and the linear maps $A_i(t)$ preserve a volume or a symplectic form,
one can choose $g$ to preserve it also.
\end{theorem}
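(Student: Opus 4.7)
The plan is to deform $f$ into the target diffeomorphism $g$ through a chain of small perturbations along the path $t\mapsto(A_i(t))_i$, preserving the homoclinic relation with $q$ at each step. First, apply Theorem~\ref{t.linearize} once, with target differentials $Df(f^i(p))$ at the points of $\cO(p)$, to reduce to the case where $f$ is already linear in the chart $\chi$ on a fixed neighborhood of each $f^i(p)$; this preliminary perturbation is arbitrarily $C^1$-small and preserves the homoclinic relation automatically by $C^1$-openness of transverse intersections. Then, by uniform continuity of $t\mapsto(A_i(t))$ on $[0,1]$, pick a partition $0=t_0<t_1<\dots<t_N=1$ so that, for each $k$, the cocycle $(A_i(t_{k+1}))$ is a very small linear perturbation of $(A_i(t_k))$, and apply Theorem~\ref{t.linearize} inductively to produce $f_k\in\Diff^1(M)$ linear in $\chi$ near $f^i(p)$ with $Df_k(f^i(p))=A_i(t_k)$, where $f_{k+1}$ is an $\eta_k$-perturbation of $f_k$ supported in an arbitrarily small neighborhood $W$ of $\cO(p)$ contained in the linear domain, and $\sum_k\eta_k<\varepsilon/2$.

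The main obstacle is to guarantee that the homoclinic relation between $\cO(p)$ and $q$ survives the entire chain. A horseshoe $K$ containing both orbits exists for $f_0$, hyperbolic and locally maximal; hyperbolic sets are structurally stable under small $C^1$-perturbations, so if each $\eta_k$ is small enough the continuation $K_k$ persists for $f_k$ and still contains the analytic continuation of $\cO(p)$ together with $\cO(q)$ itself (which is unchanged, since it lies outside $W$). What needs checking is that the individual smallness required can be chosen uniformly in $k$. This is where the hypothesis that the whole path $t\mapsto(A_i(t))$ is hyperbolic comes in: the stable and unstable eigenspaces of the cocycle vary continuously in $t\in[0,1]$ with uniform transversality and expansion/contraction bounds, so a single threshold $\eta>0$ suffices for horseshoe persistence at every step, and refining the partition to ensure $\eta_k<\eta$ closes the induction. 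Equivalently, one can track once and for all transverse intersection points $x\in W^s(\cO(p))\pitchfork W^u(\cO(q))$ and $y\in W^u(\cO(p))\pitchfork W^s(\cO(q))$ and use the $\lambda$-lemma, combined with the uniform continuity of the local invariant manifolds (which in the linear domain are simply the affine eigenspaces of $(A_i(t_k))$), to produce transverse intersections of the new $W^{s/u}(\cO(p))$ with the unchanged pieces of $W^{u/s}(\cO(q))$ at each step.

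The conservative case requires no additional ingredient. Theorem~\ref{t.linearize} supplies volume- or symplectic-preserving perturbations whenever both $f$ and the linear maps $A_i(t)$ preserve the relevant form, so every $f_k$ is conservative. In the symplectic setting the stable and unstable eigenspaces of each $(A_i(t))$ are Lagrangian, and by compactness of the Lagrangian Grassmannian they vary in a compact family; the continuity estimates underlying horseshoe persistence are therefore uniform in $t$, and the argument above applies without modification.
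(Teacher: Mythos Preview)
Your approach differs from the paper's and contains a gap at the crucial step.

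You rely on structural stability: each small step $f_k\to f_{k+1}$ should carry the horseshoe $K_k\supset\cO(p)\cup\cO(q)$ to a continuation $K_{k+1}$. The key claim is that a single threshold $\eta>0$ suffices for this at every step, and you justify it by the uniform hyperbolicity along $[0,1]$ of the periodic cocycle $(A_i(t))$. But the persistence threshold of $K_k$ depends on the hyperbolicity and geometry of the \emph{entire} horseshoe, not just on the linear cocycle at $\cO(p)$; in particular it depends on the angles at the heteroclinic intersections, which move with $k$. Nothing in your argument rules out that, somewhere along the path, $W^s(\cO(p),f_k)$ and the relevant piece of $W^u(\cO(q))$ become nearly tangent, so that the hyperbolicity constants of $K_k$ degrade and $\eta(K_k)\to0$. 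Your invocation of the $\lambda$-lemma is circular here: it requires a transverse intersection as input, which is precisely what you are trying to maintain. (A smaller issue: requiring $\sum_k\eta_k<\varepsilon/2$ amounts to bounding the total variation of $t\mapsto A_i(t)$, which the hypothesis does not control; what you actually need, and what is achievable with nested supports, is only $d_{C^1}(f_N,f)<\varepsilon$.)

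The paper avoids any appeal to persistence. It builds a single perturbation $h$ supported in nested balls $B_N\subset\cdots\subset B_0$ around $p$, with $Dh$ close to $A(k/N)$ on the shell $B_{k-1}\setminus B_k$ and $Dh=A(1)$ on $B_N$. Then, rather than hoping the original heteroclinic point $z\in W^s_{loc}(p)\cap W^u(q)$ stays on the new stable manifold, it \emph{actively corrects} the forward orbit of $z$: as the orbit crosses each shell, an additional tiny perturbation (Proposition~\ref{p.elementary}) nudges the current iterate onto the local stable space of the next linear model $A((k{+}1)/N)$. Since consecutive stable directions are uniformly close, each nudge is small, and the $N$ supports are pairwise disjoint. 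This constructs the required heteroclinic orbit directly, with no uniformity issue to resolve.
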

\begin{proof}
We need to extend \cite{Gourmelon2014} to the symplectic and volume-preserving cases, once the corresponding
versions of Franks' lemma (Theorem~\ref{t.linearize}) have been obtained.
One can assume that the orbits of $p$ and $q$ are distinct since otherwise the statement follows
from Theorem~\ref{t.linearize}.

In order to simplify the exposition,
we assume that $p$ is a fixed point, so that the linear perturbation
is reduced to a single path $A(t)=A_1(t)$.
Let $z$ be a transverse intersection point between $W^{s}_{loc}(p)$ and $W^u(q)$ and
 consider $N\geq 1$ large. We decompose the path $(A(t))_{t\in [0,1]}$ and consider the maps $A(k/N)$, $k=0,\dots,N$.
Since $q\notin\cO(p)$,
there exists a small ball $B_0$ centered at $p$
such that the backward orbit of $z$ does not intersect $B_0$.

Working in the chart and using inductively Theorem~\ref{t.linearize}, we build a first perturbation $h$
and a family of small nested balls $B_N\subset \dots \subset B_1\subset B_0$ centered at $p$
such that $f(B_{i+1})$ is contained in and much smaller than $B_i$, $h=f$ outside $B_0$, and such that
the differential $Dh$ is close to $A(k/N)$ on the region $B_{k-1}\setminus B_{k}$,
coincides with $A(k/N)$ on the boundary of
$B_k$, $1\leq k\leq N$, and with $A(1)$ inside $B_N$.

To keep the homoclinic connection,
we modify $h$ as follows. Let $f^n(z)$ be the first iterate of $z$ in $B_0$.
Since $A(1/N)$ and $Df(p)$ are $C^1$-close, the stable manifolds of $p$ for these two maps are $C^1$-close.
Using Proposition~\ref{p.elementary},
one can thus perturb $h$ at a point of $B_0\setminus B_1$ such that the forward orbit of $z$
under the new diffeomorphism meets the stable manifold of $p$ for $A(1/N)$ when it enters in $B_1$.
Since $A(1/N)$ and $A(2/N)$ are $C^1$-close, one can perturb $h$ at a point of $B_1\setminus B_2$ such that the forward orbit of $z$
under the new diffeomorphism meets the stable manifold of $p$ for $A(2/N)$ when it enters in $B_2$.
Perturbing inductively $N$ times on disjoint domains, {one} ensures that some {forward iterate of  $z$} belongs to the stable
manifold of $p$ for $A(1)$  in $B_N$. This {preserves a} transverse intersection at $z$ between
$W^{s}(p)$ and $W^u(q)$ for the new diffeomorphism. 

One similarly maintains a transverse intersection between
$W^{u}_{loc}(p)$ and $W^s(q)$ for the new diffeomorphism. Since the local stable and unstable manifolds of $p$ are separated,
this second perturbation can be chosen with disjoint support so as to preserve the transverse intersection $z$ between $W^s(p)$ and $W^u(p)$.
\end{proof}

\section{Perturbation of periodic linear cocycles}\label{ss.perturbative-cocycle}

In this section we show how to modify eigenvalues and create small angles between eigenspaces through perturbations.
The results in the dissipative case are essentially well-known.
{The perturbations preserve the Jacobian, hence the volume-preserving case also follows.}
The symplectic case, though, requires different arguments.

\subsection{Simple spectrum}\label{s.simple}

We first show how to obtain simple spectrum by arbitrarily small perturbations.

\begin{proposition}[Simple spectrum]\label{p.simple}
For any $d_0\geq 1$ and $\varepsilon>0$, any periodic cocycle in $GL(d_0,\RR)$, $SL(d_0,\RR)$ or $Sp(d_0,\RR)$ admits an $\varepsilon$-path of perturbations $(A_i(t))_{t\in [0,1]}$ with the same period $\ell$,
such that $\cA(t):=A_{\ell}(t)\dots A_{1}(t)$ satisfies:
\begin{itemize}
\item[] $\cA(1)$ has $d_0$ distinct eigenvalues; their arguments are in $\pi\QQ$.
\end{itemize}
In $GL(d_0,\RR)$ and $SL(d_0,\RR)$ one can furthermore require that the moduli of the eigenvalues of $\cA(t)$ are constant in $t\in[0,1]$.
\end{proposition}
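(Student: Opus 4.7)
The plan is to reduce the problem to perturbing the product matrix $\cA:=A_\ell\cdots A_1\in G$ alone (where $G$ is $GL(d_0,\RR)$, $SL(d_0,\RR)$, or $Sp(d_0,\RR)$), and then to construct such a perturbation from the Jordan normal form of $\cA$. For the reduction I would keep $A_i(t):=A_i$ for $i=2,\dots,\ell$ and, for a continuous path $(\cA(t))_{t\in[0,1]}$ in $G$ with $\cA(0)=\cA$, define $A_1(t):=(A_\ell\cdots A_2)^{-1}\cA(t)$. Then $A_\ell(t)\cdots A_1(t)=\cA(t)$ automatically and $A_1(t)\in G$ since $G$ is a group, while
$$\max\bigl(\|A_1(t)-A_1\|,\|A_1(t)^{-1}-A_1^{-1}\|\bigr)\leq K\cdot\max\bigl(\|\cA(t)-\cA\|,\|\cA(t)^{-1}-\cA^{-1}\|\bigr),$$
with $K:=\max(\|A_\ell\cdots A_2\|,\|(A_\ell\cdots A_2)^{-1}\|)$. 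Hence it suffices to find, inside an arbitrarily small prescribed neighborhood of $\cA$ in $G$, a continuous path $(\cA(t))$ such that $\cA(1)$ has simple spectrum with arguments in $\pi\QQ$ and such that (for $G=GL$ or $SL$) the multiset of moduli of eigenvalues of $\cA(t)$ stays constant in $t$.

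In the $GL$ and $SL$ cases I would write $\cA=PJP^{-1}$ in real Jordan normal form and build $\cA(t)=PJ(t)P^{-1}$ by perturbing $J$ block by block: (a) first annihilate the nilpotent (off-diagonal) part of each block by a small deformation; (b) then split each repeated real eigenvalue $\lambda$ of multiplicity $m$ into $\lfloor m/2\rfloor$ rotation-dilation $2\times 2$ blocks with eigenvalues $\lambda e^{\pm i\theta_j}$ at small pairwise distinct angles $\theta_j>0$ (plus one leftover $\lambda$ if $m$ is odd), and analogously split each repeated complex conjugate pair into distinct rotation-dilation blocks of the same modulus; (c) once the spectrum is simple, rotate each complex pair by a tiny extra angle so that its argument lands in $\pi\QQ$ (real eigenvalues already have argument in $\{0,\pi\}\subset\pi\QQ$). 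The moduli of all eigenvalues are preserved at every step, and the determinant is unchanged in the $SL$ case since rotating a complex pair preserves the factor $|\lambda|^2$ it contributes.

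For $G=Sp(2d,\RR)$ I would replace the Jordan form by the symplectic real normal form, which decomposes $\cA$ into symplectically orthogonal invariant blocks attached to reciprocal pairs $\{\lambda,\lambda^{-1}\}$ or quadruples $\{\lambda,\bar\lambda,\lambda^{-1},\bar\lambda^{-1}\}$, and perform the analogues of (b) and (c) on each block in a way compatible with the reciprocal and conjugation structure, keeping the perturbation in the small symplectic subgroup attached to that block. The constant-moduli requirement has to be dropped in this case since, for instance, a multiple eigenvalue at $\pm 1$ cannot be split off the real axis while remaining symplectic with unit moduli. The main obstacle I anticipate is precisely this symplectic block-wise splitting: one must choose the right symplectic normal form and check that every elementary splitting can be realized inside the corresponding symplectic subgroup. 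Once that setup is in place, the remaining linear algebra is routine.
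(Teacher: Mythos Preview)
Your reduction to perturbing $\cA$ alone (putting the whole deformation into $A_1$) is correct and is exactly what the paper does. The problem lies in step~(a): annihilating the nilpotent part of a Jordan block is \emph{not} a small perturbation. In the fixed basis $P$, a real Jordan block $\lambda I_k+N_k$ with $k\ge 2$ has $1$'s on the superdiagonal of $N_k$, so the path $J(t)=J-tN_k$ satisfies $\|J(1)-J(0)\|\ge 1$; hence $\|\cA(1)-\cA\|=\|P(J(1)-J)P^{-1}\|\ge (\|P\|\,\|P^{-1}\|)^{-1}$, a fixed positive number you cannot make small. This also breaks step~(b) as written, since passing from $\begin{pmatrix}\lambda&1\\0&\lambda\end{pmatrix}$ to a rotation--dilation $\lambda R_\theta$ with small $\theta$ again requires erasing the superdiagonal~$1$.

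The paper never removes the nilpotent part. Instead it works on one $2$-dimensional invariant subspace at a time and, in the nilpotent case, writes the block as $\lambda\begin{pmatrix}1&1\\0&1\end{pmatrix}$ and perturbs it to $\lambda\begin{pmatrix}1-\varepsilon&1\\-\varepsilon&1\end{pmatrix}$: only two entries move, by $\varepsilon$, the $2\times 2$ determinant stays equal to $1$ (so both eigenvalues keep modulus $|\lambda|$), and for small $\varepsilon>0$ the eigenvalues become a non-real conjugate pair. The remedy for your argument is therefore to skip (a) entirely and, in (b), split each repeated eigenvalue by adding a tiny lower-triangular correction \emph{tuned to keep the block determinant fixed}, leaving the superdiagonal $1$ in place.

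In the symplectic case you correctly identify the difficulty but stop at a sketch. The paper's concrete device is to pick an eigenvector (or a minimal invariant $2$-plane $P$) and complete it to a symplectic basis so that $\cA$ becomes block-triangular with an upper-left block $R$ and a companion block $(R^T)^{-1}$; one checks directly that $R$ can then be perturbed freely inside $GL$ while the full matrix remains in $Sp(d_0,\RR)$. When the invariant $2$-plane is itself symplectic (which forces $|\lambda|=1$), one instead splits $\RR^{d_0}=P\oplus P^\omega$ symplectically and rotates inside $P$. That is the explicit realization of the ``symplectic subgroup attached to each block'' you were anticipating.
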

\begin{proof}
We first sketch the proof in the dissipative setting (which is essentially contained in the claim of the proof of~\cite[Lemma 7.3]{BochiBonatti}).
It easily implies the volume-preserving case.
Since the necessary perturbation is arbitrarily small and can therefore be performed at a single iterate, it is enough to consider the case $\ell=1$. 

We proceed by induction on the sum $\delta$ of the dimensions of the eigenspaces corresponding to eigenvalues with multiplicity or arguments outside $\pi\QQ$. If $\delta=0$, there is nothing to show. Otherwise, we are in one of the following three cases in each of which, we can decrease $\delta$ by $2$. We note that the number of perturbations is at most $d_0/2$.

\smallbreak\noindent{\sl Case 1.} $\cA$ is $\RR$-conjugate to
$$\begin{pmatrix} \lambda I & B\\  0 & C \end{pmatrix}
\textrm{ where }I=\begin{pmatrix} 1 & 0\\  0 & 1 \end{pmatrix}.$$
One modifies $I$ as 
$$R_\theta=\begin{pmatrix} \cos(\pi\theta) & \sin(\pi\theta)\\  -\sin(\pi\theta) & \cos(\pi\theta) \end{pmatrix}.$$
For $\theta>0$ small, the eigenvalues of $R_\theta$ are non-real and of modulus $1$ and their arguments can be made rational.
The other eigenvalues are unchanged.

\smallbreak\noindent{\sl Case 2.} $\cA$ is $\RR$-conjugate to
$$\begin{pmatrix} \lambda N & B\\  0 & C \end{pmatrix}\textrm{ 
where }N=\begin{pmatrix} 1 & 1\\  0 & 1 \end{pmatrix}.$$
One modifies $N$ as 
$$N_\varepsilon=\begin{pmatrix} 1-\varepsilon & 1\\  -\varepsilon & 1 \end{pmatrix}$$
and one concludes as in Case 1.

\smallbreak\noindent{\sl Case 3.}  $\cA$ is $\RR$-conjugate to
$$\begin{pmatrix} rR_\theta & B\\  0 & C \end{pmatrix},$$ 
where $R_\theta$ is the rotation by angle $\pi\theta$.
One perturbs $\cA$ by modifying the angle $\theta$: during the perturbation the eigenvalues along the invariant $2$-plane $\RR^2\times \{0\}^{d_0-2}$
moves, but keep the same modulus. This concludes the proof in the dissipative case.

\bigbreak

\noindent{\sl We now investigate the symplectic case}
(See also~\cite[Section V]{robinson-global-analysis}
{and~\cite{robinson-periodique}}.)
Let us consider two integers $n,m\geq 1$ such that $2(n+m)=d_0$.
For each matrix $R\in GL(n,\RR)$ and each matrix
$\begin{pmatrix} A & B\\  C & D \end{pmatrix}$
in $Sp(2m,\RR)$, the matrices:
\begin{equation}\label{e.reduction}
\begin{pmatrix} R & RU & RS & RV\\
0 & A & X & B \\
0 & 0 & {}^tR^{-1} & 0 \\
0 & C & Z & D
 \end{pmatrix}
 \end{equation}
belong to $Sp(d_0,\RR)$,
once $S,U,V,X,Z,$ satisfy some relations independent from $R$.
One can thus perturb $R$ while keeping the matrix in $Sp(d_0,\RR)$.
\smallbreak

\smallbreak\noindent{\sl Case 1.} $\cA$ has a real eigenvalue $\lambda$.

One chooses an eigenvector $u$ and completes it as a symplectic basis.
This defines a bounded change of coordinates after which
$\cA$ takes the form~\eqref{e.reduction} above, with $n=1$
and $R=\lambda$. One can change the eigenvalues $\lambda,\lambda^{-1}$ 
by perturbing $R$, without affecting the other eigenvalues.

\smallbreak\noindent{\sl Case 2.}  $\cA$ has a complex eigenvalue $\sigma=\lambda e^{i\theta}$.

Thus there exists an invariant  $2$-plane $P$ where $\cA$ induces the map
$R:=\lambda {\rm Rot}_\theta$, where ${\rm Rot}_\theta$ is the rotation with angle $\theta$.
If the symplectic form vanishes on $P$, one completes it as before to get a symplectic basis and a bounded change of coordinates. Now $\cA$ takes
the form~\eqref{e.reduction} above.
One can perturb $R$ to get four distinct eigenvalues $\sigma, \bar \sigma, \sigma^{-1}, \bar \sigma^{-1}$,
all outside the unit circle \text{with arguments rational multiples of $\pi$.}
\smallbreak

In the case $P$ is symplectic, $\lambda$ equals $1$ and $R$ is a rotation.
The symplectic complement $P^\omega$ of $P$ is invariant
and $\cA$ is the Cartesian product of the symplectic rotation $R$ with some
map $Q\in Sp(d_0-2,\RR)$. One can again perturb $R$ to make the arguments of the eigenvalues $\sigma, \bar \sigma$, rational multiples of $\pi$ without modifying the rest of the spectrum.
\end{proof}

\subsection{Real eigenvalues}\label{s.real}

We now show that for sufficiently long cocycles one can make all the eigenvalues real.

\begin{proposition}[Real eigenvalues]\label{p.real}
For any $d_0\geq 1$, $C>1$, $\varepsilon>0$, there exists $T\geq 1$ with the following property.
Any periodic cocycle in $GL(d_0,\RR)$ or $SL(d_0,\RR)$ bounded by $C$ and with period $\ell\geq T$
admits an $\varepsilon$-path of perturbations $(A_i(t))_{t\in [0,1]}$ with the same period $\ell$ such that
\begin{itemize}
\item[--] the eigenvalues of $\cA(t):=A_{\ell}(t)\dots A_{1}(t)$ have moduli constant in $t\in[0,1]$,
\item[--] $\cA(1)$ has only real eigenvalues.
\end{itemize}
The same result holds in $Sp(d_0,\RR)$ if the cocycle is hyperbolic.
\end{proposition}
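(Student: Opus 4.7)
My plan is in two stages. First, I apply Proposition \ref{p.simple} to reduce to the case where the monodromy $\cA := A_\ell\cdots A_1$ has simple spectrum with arguments in $\pi\QQ$, at a cost that can be absorbed into the $\varepsilon$-budget. Each complex eigenvalue of $\cA$ then belongs to a conjugate pair $re^{\pm i\theta}$ with a canonical $\cA$-invariant real 2-plane $P\subset\RR^{d_0}$; in the hyperbolic symplectic case it belongs instead to a Krein quadruple $\{r^{\pm1}e^{\pm i\theta}\}$ whose 4-dimensional $\cA$-invariant subspace is symplectic and decomposes as $E^s\oplus E^u$, both Lagrangian 2-planes (using $r\ne 1$). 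It suffices to process one such pair (resp.\ quadruple) at a time, since each perturbation acts only on the corresponding invariant subbundle and leaves the other eigenvalue moduli unchanged.

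For a pair $re^{\pm i\theta}$, propagate $P$ along the orbit as $P_i := (A_i\cdots A_1)P$, $P_\ell = P_0$, together with the $\cA$-invariant complement $Q_i$. With respect to the splittings $P_{i-1}\oplus Q_{i-1} \to P_i\oplus Q_i$ each $A_i$ has block form $\bigl(\begin{smallmatrix}N_i & * \\ 0 & B_i\end{smallmatrix}\bigr)$, and the plane cocycle $(N_i)$ satisfies that $N_\ell\cdots N_1$ is $\RR$-conjugate to $rR_\theta$. The key step is to choose adapted frames $(u_i,v_i)$ of $P_i$ in which each $N_i$ takes the form $\tilde\lambda_i R_{\alpha_i}$, with $\prod_i\tilde\lambda_i = r$ and $\sum_i\alpha_i\equiv\theta\pmod{2\pi}$; such frames exist by propagating along the orbit the conformal structure induced on $P_0$ by the conjugacy $N_\ell\cdots N_1\sim rR_\theta$. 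Define the path
$$
  A_i(t)|_{P_{i-1}} := \tilde\lambda_i R_{\alpha_i - t\theta/\ell}, \qquad A_i(t)|_{Q_{i-1}} := A_i|_{Q_{i-1}}
$$
(keeping the off-diagonal block of $A_i$ fixed). Then $\cA(t)|_{P_0} = r R_{(1-t)\theta}$: the two eigenvalues on $P$ retain modulus $r$ for every $t$ and become $\pm r$ (real) at $t=1$, while all other eigenvalues are untouched. The perturbation at step $i$ has operator-norm size $\le C'(C, d_0)\cdot\theta/\ell$, so choosing $T \ge C'\pi/\varepsilon$ makes it $<\varepsilon$ for every $\ell\ge T$. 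In the hyperbolic symplectic case, I run this construction simultaneously on the two Lagrangian pieces $E^s, E^u$ in Darboux-type bases $\{e^s_j, e^u_k\}$ with $\omega(e^s_j, e^u_k) = \delta_{jk}$; the simultaneous rotation $R_\beta\oplus R_\beta$ in such bases is symplectic by the characterization~\eqref{e.symplecticmatrix}, so the perturbed cocycle remains in $Sp(d_0,\RR)$ and both conjugate pairs become real.

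The main obstacle is uniform control of the distortion $C'(C,d_0)$ of the adapted frames, independently of $\ell$ and of the particular gaps between eigenvalue moduli: a priori, propagating a conformal structure along $\ell$ bounded steps can inflate exponentially, and the invariant complement $Q_i$ can form arbitrarily small angles with $P_i$. The resolution is that after Proposition \ref{p.simple}, the relevant invariant subbundles admit a conformal normalization along the periodic orbit whose distortion depends only on $C$ and $d_0$, using the periodicity of the cocycle crucially. Hyperbolicity is essential in the symplectic case, since only then are the complex pairs off the unit circle, providing the invariant Lagrangian splitting $E^s\oplus E^u$ that the paired symplectic rotation requires; a complex pair on the unit circle sits in a single symplectic 2-plane where this rotation is not symplectic (see Remark~\ref{r.difficulty1}).
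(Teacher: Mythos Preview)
Your proposal has a genuine gap exactly where you flag ``the main obstacle'': the propagated conformal frames are \emph{not} uniformly bounded in $C,d_0$, and the sentence you offer as resolution is a restatement of the desired conclusion, not a proof. Here is a concrete failure in $SL(2,\RR)$. Let $D=\bigl(\begin{smallmatrix}2&0\\0&1/2\end{smallmatrix}\bigr)$, fix $\theta\in(0,\pi)$, set $\ell=2n$, and take $A_1=\dots=A_n=D$, $A_{n+1}=\dots=A_{2n-1}=D^{-1}$, $A_{2n}=R_\theta D^{-1}$. This cocycle is bounded by $C=2$ and its monodromy is $\cA=R_\theta$. The unique conformal class on $P_0=\RR^2$ making $\cA$ a similitude is the standard one, so after propagation the conformal class at step $n$ is the $D^n$-image of the standard class, with eccentricity $4^n$. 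Your perturbation at step $n{+}1$ is then $A_{n+1}\cdot D^nR_{-t\theta/\ell}D^{-n}$, and one computes
\[
   \bigl\|A_{n+1}(t)-A_{n+1}\bigr\|\;\asymp\;2^{2n-1}\,\frac{|\theta|}{\ell}\;=\;\frac{4^{n}\,|\theta|}{4n}\;\xrightarrow[n\to\infty]{}\;\infty.
\]
Periodicity gives no extra freedom: the conformal class at each step is forced (it is the orbit of the unique fixed point of the projective return map), so no rescaling or rotation of the frames helps. The second issue you raise---small angles between $P_i$ and the invariant complement $Q_i$---is equally unresolved and produces the same kind of blow-up when $d_0>2$. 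Thus the heart of the dissipative case (which the paper simply cites from Bochi--Bonatti) is precisely what your argument is missing; that proof does not linearize each $N_i$ to a similitude.

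For the symplectic case the paper takes a completely different route from yours: rather than treating each Krein quadruple via paired rotations in Darboux frames, it conjugates by a \emph{uniformly bounded} symplectic cocycle (Corollary~\ref{c.change-basis2}, using only compactness of the Lagrangian Grassmannian) so that the full stable Lagrangian becomes $\RR^d\times\{0\}^d$ at every step. Each $A_i$ then has the block form $\bigl(\begin{smallmatrix}B_i^T&C_i\\0&B_i^{-1}\end{smallmatrix}\bigr)$, the dissipative proposition is applied to $(B_i)$ in $GL(d,\RR)$, and the resulting path $(U_i(t))$ is lifted symplectically via $\bigl(\begin{smallmatrix}U_i(t)^T&0\\0&U_i(t)^{-1}\end{smallmatrix}\bigr)$. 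Hyperbolicity is used only to furnish this global Lagrangian subbundle, and the boundedness of the conjugacy is what circumvents both distortion problems at once. (A side remark: your preliminary use of Proposition~\ref{p.simple} in the symplectic case is also problematic for the statement as written, since that proposition does not guarantee constant moduli along the path in $Sp(d_0,\RR)$.)
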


A previous result for surfaces was obtained in~\cite[Lemma 6.6]{BC}.
The dissipative case was proved in~\cite[Proposition 4.3]{BochiBonatti}
and the volume-preserving case is an immediate consequence.  We extend it to the symplectic case.

{\begin{remark}\label{r.difficulty1} We don't know if the same result for arbitrary symplectic cocycles
(in particular for cocycles admitting an invariant symplectic $2$-plane,
when $d_0\geq 4$).
\end{remark}}

\begin{proof}[Proof in the symplectic case]
By Corollary~\ref{c.change-basis2}, there exists a change of coordinates by a bounded cocycle such that
each map $A_i$ preserves the space $\RR^{d}\times \{0\}^{d}$ and the moduli of the eigenvalues of
$\cA$ along this space are smaller than $1$. Consequently, one can assume each map $A_i$ to be
a matrix of the form
\begin{equation}\label{e.triangle}
\begin{pmatrix} B_i^T & C_i\\ 0 & B_i^{-1} \end{pmatrix}.
\end{equation}
The dissipative version of Proposition~\ref{p.real} applied to the cocycle $(B_i)$ yields a path of perturbations $(U_i(t))_{t\in [0,1]}$ in $GL(d,\RR)$ such that the moduli of the eigenvalues
of the cocycles $(B_iU_i(t))$ are constant in $t$ and the eigenvalues of the composition
$(B_\ell U_\ell(1))\dots(B_1U_1(1))$ are all real.
To build the $\varepsilon$-path of perturbations, one can set  $t\mapsto A_i(t):=A_i. D_i(t)$,
where $D_i(t)$ is given by the symplectic matrix 
$$D_i(t):=\begin{pmatrix} U(t)^T & 0\\ 0 & U(t)^{-1} \end{pmatrix}.$$
\end{proof}

\begin{corollary}[Real simple eigenvalues]\label{c.real}
For any $d_0\geq 1$, $C>1$, $\varepsilon>0$, there exists $T\geq 1$ with the following property.
Any periodic cocycle in $GL(d_0,\RR)$ or $SL(d_0,\RR)$ bounded by $C$ and with period $\ell\geq T$
admits an $\varepsilon$-path of perturbations $(A_i(t))_{t\in [0,1]}$ with the same period such that
\begin{itemize}
\item[--] $\cA(t):=A_{\ell}(t)\dots A_{1}(t)$ is hyperbolic for any {$t>0$},
\item[--] $\cA(1)$ has $d_0$ distinct real eigenvalues
with modulus in $(0,1/2)\cup (2,+\infty)$.
\end{itemize}
The same result holds in $Sp(d_0,\RR)$ if the cocycle is hyperbolic.
\end{corollary}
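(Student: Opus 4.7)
The plan is to combine Proposition~\ref{p.real} (which makes eigenvalues real while preserving their moduli) with a final multiplicative adjustment that simultaneously separates the eigenvalues and pushes their moduli well off the unit circle. The latter is possible because the period $\ell$ is large.

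Fix $d_0, C, \varepsilon$ and let $T_0$ be the threshold provided by Proposition~\ref{p.real} applied with parameter $\varepsilon/2$. I take $T \geq T_0$ large enough that perturbations of size $O(1/T)$ are of norm $< \varepsilon/2$. On $t \in [0,1/2]$, I apply Proposition~\ref{p.real} to obtain an intermediate cocycle $(B_i)$ whose product $\cB$ has real eigenvalues $\lambda_1,\dots,\lambda_{d_0}$, with the same moduli as $\cA$. Then, by a bounded change of basis along the orbit (invoking Corollary~\ref{c.change-basis2} and the block reduction from the proof of Proposition~\ref{p.real} in the symplectic case), I put each $B_i$ into block upper triangular form, so that the diagonal entries $b_{k,i}$ satisfy $\prod_i b_{k,i} = \lambda_k$. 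On $t \in [1/2,1]$, I extend the path as $B_i \cdot \exp(sM_i)$, where each $M_i$ is diagonal and trace-free (in $SL$) or lies in the symplectic Cartan subalgebra (in $Sp$), with entries $\eta_{k,i} := (M_i)_{kk}$ chosen so that $\sum_i \eta_{k,i} = \log(\lambda'_k/\lambda_k)$ for target values $\lambda'_k$ that I select as $d_0$ distinct real numbers with moduli in $(0,1/2) \cup (2,\infty)$ and with the same signs as $\lambda_k$. Because $\ell \geq T$ is large, the $\eta_{k,i}$ can be chosen of size $O(1/\ell) < \varepsilon/2$, and each eigenvalue $\lambda_k(s) = \lambda_k \exp\bigl(s\sum_i \eta_{k,i}\bigr)$ varies monotonically from $\lambda_k$ to $\lambda'_k$ along this second half.

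The main obstacle is ensuring $\cA(t)$ is hyperbolic throughout $(0,1]$, not merely at the endpoint. In the symplectic case this is automatic: the cocycle is hyperbolic by assumption, Proposition~\ref{p.real} preserves moduli on $[0,1/2]$, and the monotone adjustment on $[1/2,1]$ only pushes moduli further from $1$. In the $GL/SL$ case, however, $\cA$ may have eigenvalues on the unit circle, in which case Proposition~\ref{p.real}'s path fails to be hyperbolic. I would handle this by prepending a short push-off phase on $t \in [0, \delta]$: after a bounded conjugation of the original $(A_i)$ to real block-triangular form, multiply each real Jordan block by a scalar factor (again of size $O(1/\ell)$), chosen so that each eigenvalue modulus moves strictly off $1$ as soon as $t > 0$ and then stays monotone. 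After this $\cA(\delta)$ is hyperbolic, Proposition~\ref{p.real} on $[\delta,1/2]$ preserves hyperbolicity (moduli constant), and the triangular adjustment on $[1/2,1]$ proceeds as described. Concatenating and reparametrizing yields the desired $\varepsilon$-path.
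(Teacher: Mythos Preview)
Your approach is essentially the paper's: apply Proposition~\ref{p.real} to make the eigenvalues real while preserving their moduli, conjugate by a bounded cocycle to (block) upper-triangular form (Gram--Schmidt in the $GL/SL$ case, the form~\eqref{e.triangle} with triangular $B_i$ in the symplectic case), and then perturb the diagonal coefficients to separate the eigenvalues and push their moduli into $(0,1/2)\cup(2,\infty)$. The paper's proof says exactly this, in two sentences.

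Where you go beyond the paper is in treating the requirement ``$\cA(t)$ hyperbolic for all $t>0$'' carefully. The paper's terse argument does not spell out what happens during the Proposition~\ref{p.real} phase when the original $GL/SL$ cocycle has eigenvalues on the unit circle (moduli are preserved there, so hyperbolicity would fail on an initial segment of the path). Your preliminary push-off phase is a correct and natural fix. Two minor points: in the push-off phase for $SL$, scaling each block by an independent scalar does not preserve determinant, so you should impose the same trace-zero/product-one constraint you already use on $[1/2,1]$; and on $[1/2,1]$ the relevant condition is not that moduli move \emph{monotonically away} from~$1$ but simply that each $|\lambda_k(s)|$ stays on the same side of~$1$ as $|\lambda_k|$, which is what your choice of targets actually guarantees.
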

 \begin{proof}
In the dissipative or volume-preserving cases, once all the eigenvalues are real, one can conjugate by a bounded cocycle (using Gram-Schmidt orthonormalization) to reduce
 to the case where all the $A_i$ are defined by triangular matrices. The result then follows easily by
 perturbing the diagonal coefficients.

 In the symplectic case, the proof is the same:
conjugacy by a bounded cocycle brings the cocycle $\cA$ to the form~\eqref{e.triangle}, where
 $B_i$ is a lower triangular matrix.
 \end{proof}

\subsection{Small angle}\label{s.angle}

We use the lack of $N$-dominated splitting to find a perturbation making the angle between the
stable and unstable bundles small.

\begin{theorem}[Small angle]\label{t.angle}
For any $d_0\geq 1$, $C>1$, $\varepsilon>0$, there exist $T,N \geq 1$ with the following property.
For any hyperbolic periodic cocycle $(A_i)_{i\in \ZZ}$ in $GL(d_0,\RR)$, $SL(d_0,\RR)$ or $Sp(d_0,\RR)$ bounded by $C$, with period $\ell\geq T$ and such that {the splitting defined by the stable and unstable eigenvalues is not an $N$-dominated splitting}, there exists an $\varepsilon$-path of perturbations $(A_i(t))_{t\in [0,1]}$ with period $\ell$, such that:
\begin{itemize}
\item[--]  $\cA(t):=A_\ell(t)\dots A_1(t)$ is hyperbolic for each $t\in[0,1]$,
\item[--] for some $j\in\{1,..., \ell\}$, the angle between the stable and unstable
spaces $E^s_{j}, E^u_{j}$ of the cocycle $(A_i(1))$ is smaller than $\varepsilon$.
\end{itemize}
\end{theorem}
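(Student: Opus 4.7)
The plan is to first reduce to a well-behaved spectral situation and then exploit the lack of $N$-domination through a rotation argument analogous to the classical Mañé--Bochi construction, adapted to the symplectic setting.

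First, I would apply Corollary~\ref{c.real} (composed with Proposition~\ref{p.simple}) to perform an $\varepsilon/3$-path of perturbations after which the cocycle $\cA$ has $d_0$ distinct \emph{real} eigenvalues with moduli outside a neighbourhood of the unit circle, while remaining hyperbolic along the path and bounded by some $C'=C'(C,\varepsilon)$. Importantly, Corollary~\ref{c.real} preserves the moduli of the eigenvalues, so the lack of $N$-domination between the stable and unstable bundles survives (up to adjusting $N$) since domination only depends on the norms of the induced maps on $E^s$ and $E^u$. After this step, at every index $i$ the tangent space splits as an orthogonal-up-to-bounded-distortion sum $E^s_i\oplus E^u_i$ of sums of one-dimensional eigenspaces, and in the symplectic case both $E^s_i$ and $E^u_i$ are Lagrangian, so Corollary~\ref{c.change-basis2} supplies a uniformly bounded symplectic change of coordinates sending them to the standard Lagrangian pair $\RR^d\times\{0\}^d$ and $\{0\}^d\times\RR^d$.

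Next, using the failure of $N$-domination, I pick an index $j_0$ and unit vectors $u^s\in E^s_{j_0}$, $u^u\in E^u_{j_0}$ such that
\[
\|A_{j_0+N-1}\cdots A_{j_0}\,u^s\|\;\geq\;\tfrac{1}{2}\|A_{j_0+N-1}\cdots A_{j_0}\,u^u\|.
\]
Within the window $\{j_0,\dots,j_0+N-1\}$ I would locate a ``pivot'' intermediate index $k$ where the partial products going forward expand $E^u$ and contract $E^s$ at comparable rates, using a standard pigeonhole argument on the sequence of the norms over the window (this is the Mañé-style selection). At the pivot, I perform a single small rotation $R_\theta(t):=\mathrm{Rot}_{t\theta}$ (for $t\in[0,1]$) in a $2$-plane containing a stable eigendirection and an unstable eigendirection at $f^k(p)$; in the symplectic case I use a one-parameter symplectic rotation acting by $\mathrm{Rot}_{t\theta}$ on the $2$-plane spanned by conjugate pairs of a stable and an unstable direction while acting by $(\mathrm{Rot}_{t\theta}^T)^{-1}$ on the symplectic complement, as in Proposition~\ref{p.simple}, so the resulting linear map stays in $Sp(d_0,\RR)$.

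The amplification lemma then states that for a suitable choice of the angle $\theta\le \varepsilon/(2C'^N)$, the image under $A_{k+m-1}\cdots A_{k+1}\cdot(A_k R_\theta)$ of the rotated stable direction acquires a component along the unstable eigendirections that, thanks to the comparable growth inside the window, dominates the contracted stable component at some index $j\in\{k+1,\dots,j_0+N-1\}$. At that $j$, the new stable subspace $E^s_j(1)$ lies within angle less than $\varepsilon$ of $E^u_j(1)$. Hyperbolicity along the path is preserved since the perturbation modifies $\cA(t)$ only by conjugating the total product with the small symplectic rotation $R_{t\theta}$ at a single index, which keeps all eigenvalues off the unit circle for $\theta$ small enough; choosing $T$ large ensures the preliminary steps (simple spectrum, real eigenvalues) apply, and $N$ is taken large enough that the amplification lemma produces the desired angle.

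The main obstacle is the amplification step: one must show that a small rotation at a single pivot, when propagated by $A_{k+m-1}\cdots A_{k+1}$, spreads the stable direction onto a vector whose angle with $E^u$ becomes small somewhere in the window. This requires a careful choice of the pivot using the precise failure of $N$-domination and a control on the inner angle between $E^s$ and $E^u$ via the bounded change of coordinates from Corollary~\ref{c.change-basis2}; in the symplectic case the additional difficulty is to carry the rotation through the symplectic structure without disturbing the eigenvalues coming from the Lagrangian complement, which is handled by the block form used in the proofs of Propositions~\ref{p.simple} and~\ref{p.real}.
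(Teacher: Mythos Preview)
Your first step—applying Corollary~\ref{c.real} and a bounded symplectic change of coordinates to reduce to a cocycle with simple real spectrum and $E^s=\RR^d\times\{0\}^d$—matches the paper's opening. The gap is in your ``amplification'' step. A single rotation of size $\theta\le\varepsilon/(2C'^{N})$ at one pivot cannot produce an angle smaller than $\varepsilon$ between the new stable and unstable bundles. The failure of $N$-domination only says that $\|A^N_{j_0}u^s\|\ge\tfrac12\|A^N_{j_0}u^u\|$ at one place; this gives an amplification factor of order $1$, not of order $C'^{N}/\varepsilon$. More fundamentally, the new periodic splitting $E^s_j(1)\oplus E^u_j(1)$ is a global object determined by the whole product $\cA(1)$; it is not the forward image under $A^{m}_{k}$ of a rotated vector, so the mechanism you describe does not compute the angle you claim. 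Your sketch also never uses the largeness of the period $\ell\ge T$ beyond the preliminary reduction, which is a warning sign: in the correct argument this length is essential.

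The paper proceeds quite differently after the reduction. It further conjugates so that the cocycle is block-diagonal with $B_i^T$ upper triangular, extracts the diagonal one-dimensional cocycles $b_i(1),\dots,b_i(d),b_i(d+1),\dots,b_i(2d)$ with the symplectic symmetry $b_i(d+r)=b_i(r)^{-1}$, and proves (Lemma~\ref{lem-nodom-2}) that non-domination between some stable entry $b_i(j)$ and some unstable entry $b_i(k)$ forces non-domination between a \emph{conjugate pair} $(b_i(r)),(b_i(d+r))$. This locates a genuinely symplectic $2$-plane $E_2$ on which the $2$-dimensional Ma\~n\'e argument (Lemma~\ref{l.angle-2D}) applies: that argument performs upper-triangular shears and diagonal perturbations at \emph{many} indices along the orbit so that the tilts accumulate, keeping $E^s$ invariant and the moduli of the eigenvalues unchanged. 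The small angle obtained inside $E_2$ is then lifted to the full space via the block-triangular structure. Your proposed symplectic rotation ``$\mathrm{Rot}_{t\theta}$ on a plane and $(\mathrm{Rot}_{t\theta}^T)^{-1}$ on its complement'' is not generally symplectic for the plane you describe, and in any case does not isolate the conjugate pair that makes the reduction to $Sp(2,\RR)$ work.
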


This has been obtained in the dissipative (and volume-preserving) case in~\cite[Proposition 4.7]{Gourmelon2014}.  Previous results were obtained in~\cite{PS, wen}.

\begin{proof}[Proof of the symplectic case]
Applying Corollary~\ref{c.real},
(and assuming that the period is large enough),
one can perform a first $\varepsilon/2$-perturbation
and reduce to the case the cocycle has $d_0$ real eigenvalues with distinct moduli.
One can assume that the angle between $E^s,E^u$ is larger than $\varepsilon$ since
otherwise the {theorem} holds trivially.

Applying Lemma~\ref{l.change-basis}, one can find a bounded symplectic change of coordinates such that
each bundle $\RR^k\times \{0\}^{d_0-k+1}$, $1\leq k\leq d$, is invariant and contained in
the stable bundle (where as before we let $d_0=2d$).
The unstable bundle has the form $E^u_i=\{(L_i(u),u),\; u\in \RR^d\}$.
Since $E^u_i$ is Lagrangian, the matrix 
$$\Delta_i:=\begin{pmatrix} I_d & L_i\\ 0 & I_d\end{pmatrix}$$
is symplectic. Moreover since the angle between $E^s$ and $E^u$ is bounded away from zero,
the matrices $\Delta_i$ and $\Delta_i^{-1}$ are uniformly bounded.
After conjugating by the cocycle $(\Delta_i)$, we are thus reduced to the case the cocycle has
the form 
$$\begin{pmatrix} B_i^T & 0\\ 0 & B_i^{-1} \end{pmatrix}$$
where $B_i^T$ has been made upper triangular using as before Gram-Schmidt orthonormalization and has diagonal coefficients
$b_i(1),\dots,b_i(d)$. 
Consequently $B_i^{-1}$ is lower triangular and has
diagonal coefficients 
$$b_i(d+1)=b_i(1)^{-1},\dots,b_i(d_0)=b_i(d)^{-1}.$$
These coefficients define $d_0$ real cocycles.
\medskip

\paragraph{\it The case $d_0=2$.}
The two-dimensional case now follows from the established argument in the dissipative case. We will use the following more precise statement for our proof in higher dimensions.

\begin{lemma}\label{l.angle-2D}
For any $C>1$, $\varepsilon'>0$, there exist $T,N'\geq 1$ with the following property.
For any hyperbolic periodic cocycle $(D_i)$ of diagonal matrices in $SL(2,\RR)$ bounded by $C$,
with period $\ell\geq T$ and no $N'$-dominated splitting, there exists an $\varepsilon'$-path of
perturbations $(D_i(t))$ such that:
\begin{itemize}
\item[--] the eigenvalues of $\mathcal{D}(t):=D_\ell(t)\cdots D_1(t)$ have moduli constant in $t$;
\item[--] the {matrices} $U_i(t)=D_i^{-1}\cdot D_i(t)$ are upper triangular;
\item[--] for some $j\in \{1,..., \ell\}$, the angle between the stable and the unstable spaces
$E^s_{j},E^u_{j}$ of the cocycle $(D_i(1))$ is smaller than $\varepsilon'$.
\end{itemize}
\end{lemma}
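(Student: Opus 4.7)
The strategy is to take $U_i(t)$ to be a purely off-diagonal upper-triangular perturbation of the identity, so that $D_i(t)=D_iU_i(t)$ stays upper triangular with unchanged diagonal entries. Then the composition $\mathcal D(t)$ is upper triangular with eigenvalues $\Lambda=\prod a_i$ and $\Lambda^{-1}$ constant in $t$ (much more than the required constancy of moduli), and the first coordinate axis is invariant under every $D_i(t)$. When $|\Lambda|>1$ this horizontal axis is $E^u_j$ for all $j$ (the case $|\Lambda|<1$ is symmetric by time reversal), so the problem reduces to making the stable direction nearly horizontal at some time $j$.

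Concretely, set
\[ D_i(t):=D_i\begin{pmatrix}1 & t\beta_i\\ 0 & 1\end{pmatrix}, \qquad |\beta_i|\leq \varepsilon'/C, \]
and parametrize $E^s_j=\RR\cdot(x_j,1)$. The relation $D_j(1)E^s_j=E^s_{j+1}$ gives the linear recurrence $x_{j+1}=a_j^2(x_j+\beta_j)$ with periodic condition $x_{\ell+1}=x_1$. Introducing $P_j=\prod_{i<j}a_i^2$, a telescoping argument solves this periodic equation: taking $\beta_k=0$ for $k\leq j_0$ one obtains
\[ x_{j_0+1}=-\frac{\Lambda^2}{\Lambda^2-1}\,P_{j_0+1}\sum_{k>j_0}\frac{\beta_k}{P_k}. \]
The angle between $E^s_{j_0+1}$ and $E^u_{j_0+1}$ equals $\arctan(1/|x_{j_0+1}|)$, so it suffices to make $|x_{j_0+1}|>1/\tan\varepsilon'$.

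The failure of $N'$-domination provides $j_0$ and $n\geq N'$ with $\prod_{i=j_0+1}^{j_0+n}|a_i|^2<2$. A Pliss-type argument applied to the sequence $2\log|a_i|$ on this window produces constants $\alpha,K>0$ depending only on $C$ and a set of at least $\alpha n$ indices $k\in[j_0+1,j_0+n]$ satisfying $P_k/P_{j_0+1}\leq K$. Set $\beta_k=0$ outside the window and $\beta_k=\sigma_k\varepsilon'/C$ inside, with signs $\sigma_k\in\{\pm 1\}$ chosen so that the nonzero summands $\beta_k\,P_{j_0+1}/P_k$ all share the same sign. Using $|\Lambda|^2/(|\Lambda|^2-1)\geq 1$, one obtains
\[ |x_{j_0+1}|\;\geq\;\frac{|\Lambda|^2}{|\Lambda|^2-1}\cdot\frac{\varepsilon'\alpha n}{CK}\;\geq\;\frac{\varepsilon'\alpha n}{CK}, \]
which exceeds $1/\tan\varepsilon'$ as soon as $T=N'\geq CK/(\alpha\varepsilon'\tan\varepsilon')$. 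The main technical input is the Pliss selection of ``good'' indices where $P_k/P_{j_0+1}$ remains bounded; once it is in hand, the remaining work is telescoping the recurrence and aligning signs. The limit $|\Lambda|\to 1$ only helps, since the prefactor $|\Lambda|^2/(|\Lambda|^2-1)$ then blows up, and the bound $C$ on the cocycle keeps $\Lambda$ away from $0$ and $\infty$ uniformly per unit of period.
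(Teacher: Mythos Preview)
Your overall strategy---use only shear perturbations $U_i(t)=\begin{pmatrix}1&t\beta_i\\0&1\end{pmatrix}$ so that $\mathcal D(t)$ is upper triangular with \emph{unchanged} diagonal, then solve the periodic recurrence for the slope $x_j$ of $E^s_j$---is sound and in fact slightly more elegant than the paper's pointer to Ma\~n\'e's argument, which mixes shears with diagonal dilations $\begin{pmatrix}(1+\eta_i)^t&0\\0&(1+\eta_i)^{-t}\end{pmatrix}$ and then imposes $\prod(1+\eta_i)=1$ to restore the eigenvalue moduli. With shears alone the eigenvalues are literally constant in $t$, so you get the first bullet for free.

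There is, however, a genuine gap at the Pliss step. The claim that a Pliss-type argument on the window $[j_0+1,j_0+n]$ yields $\alpha n$ indices $k$ with $P_k/P_{j_0+1}\le K$ (for $\alpha,K$ depending only on $C$) is false. Take $|a_i|=C$ for $j_0<i\le j_0+n/2$ and $|a_i|=C^{-1}$ for $j_0+n/2<i\le j_0+n$; the product over the window is $1<2$, but the partial products $P_k/P_{j_0+1}=\exp S_{k-j_0-1}$ climb to $C^{n}$ at the midpoint, and only $O(1)$ indices (near the two ends) have $P_k/P_{j_0+1}\le K$ for any fixed $K$. Pliss controls \emph{tails} of the partial sums from a Pliss time, not the value $S_{k}$ itself, so it does not deliver what you ask of it. In this example your sum $\sum_k e^{-S_{k-1}}$ is bounded independently of $n$, and $|x_{j_0+1}|$ stays bounded.

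The fix is to evaluate at a different index. Let $m^*\in[0,n]$ maximize the partial sum $S_m=\sum_{i\le m}2\log|a_{j_0+i}|$. Your telescoped formula at $j^*=j_0+m^*+1$ reads
\[
|x_{j^*}|=\frac{1}{\Lambda^2-1}\Big(\Lambda^2\!\!\sum_{k>j_0+m^*}\!\!\beta_k\,\tfrac{P_{j^*}}{P_k}+\sum_{k\le j_0+m^*}\!\!\beta_k\,\tfrac{P_{j^*}}{P_k}\Big),
\]
and now every factor $P_{j^*}/P_k=e^{S_{m^*}-S_{k-j_0-1}}\ge1$. The forward block alone gives $|x_{j^*}|\ge\frac{\Lambda^2}{\Lambda^2-1}(n-m^*)\varepsilon'/C\ge(n-m^*)\varepsilon'/C$. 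On the other hand, since each $|c_i|\le 2\log C$ and $S_n<\log 2$, one has $S_{m^*}<\log 2+2(n-m^*)\log C$; so if $n-m^*$ is smaller than a fixed constant $L$, then \emph{all} $S_k$ are bounded by $\log 2+2L\log C$, and your original estimate at $j_0+1$ gives $|x_{j_0+1}|\ge n\varepsilon'/(2C^{2L+1})$. Choosing first $L$ and then $N'=T$ in terms of $C,\varepsilon'$ makes one of the two bounds exceed $1/\tan\varepsilon'$. This dichotomy replaces the incorrect Pliss step and rescues the shear-only approach.
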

\begin{proof}[Comment on the proof]
The proof follows the classical argument by Ma\~n\'e, see~\cite[Lemma 7.10]{BDV}.
Each matrix $U_i(t)$ either has one of the following forms.
\begin{itemize}
\item[--] $\begin{pmatrix} 1 & t\eta_i\\ 0 & 1 \end{pmatrix}$: it twists one of the bundles; or
\item[--] $\begin{pmatrix} (1+\eta_i)^t & 0\\ 0 & (1+\eta_i)^{-t} \end{pmatrix}$: it accentuates the contraction
or the expansion along the invariant bundles.
\end{itemize}
Consequently the first bundle $\RR\times \{0\}$ is invariant.
One can choose $\prod_{i=1}^\ell(1+\eta_i)=1$ so that the moduli of the
eigenvalues are unchanged.
\end{proof}

\paragraph{\it Choice of a symplectic plane.}
Let us come back to the general case.
One chooses $\varepsilon'>0$ small (see the condition later)
and takes $N'$ as given by the previous lemma.
We now explain how to fix the integer $N$ and how
under the assumptions of Theorem~\ref{t.angle}, one can find
$1\leq r \leq d$ such that
$(b_i(r))_{1\leq i\leq\ell}$ is not $N'$-dominated by the cocycle $(b_i(d+r))_{1\leq i\leq\ell}$.

\begin{lemma}
For any $N'\geq 1$, there exists $N$ with the following property.
If for any $j\in \{1,\dots,d\}$ and any $k\in \{d+1,\dots, d_0\}$
the cocycle $(b_i(j))_{1\leq i\leq\ell}$ is $N'$-dominated by the cocycle $(b_i(k))_{1\leq i\leq\ell}$,
then the bundle $E^s$ is $N$-dominated by $E^u$ for the cocycle $(A_i)_{1\leq i\leq\ell}$.
\end{lemma}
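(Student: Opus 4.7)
The plan is to reduce the $N$-domination of $E^s \oplus E^u$ for the block-diagonal cocycle $(A_i)$ to an exponential decay bound for the products $B_1 B_2 \cdots B_n$, then to obtain that bound by a path-expansion argument powered by the $1$-dimensional contraction implied by the hypothesis.

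First I would unpack the hypothesis. Since $b_i(d+j) = b_i(j)^{-1}$, taking $k = d+j$ turns the $N'$-domination of $(b_i(j))$ by $(b_i(d+j))$ into
$$\left|\prod_{i=m+1}^{m+N'} b_i(j)\right|^{2} \leq \frac{1}{2}\quad\text{for every } m\in\ZZ \text{ and every } 1\leq j\leq d.$$
Combined with $|b_i(j)|\leq C$ this yields the $1$D contraction estimate
$$\lambda_j(n):=\prod_{i=1}^{n}|b_i(j)| \;\leq\; C^{N'}\, 2^{-n/(2N')}\quad\text{for all }n\geq 0.$$

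Next I would exploit the block-diagonal form of the $A_i$ to reduce everything to a single norm estimate. Writing $\mathcal{A}_n := A_n\cdots A_1$,
$$\mathcal{A}_n \;=\; \begin{pmatrix}(B_1\cdots B_n)^T & 0\\ 0 & (B_1\cdots B_n)^{-1}\end{pmatrix},$$
so $\|\mathcal{A}_n|_{E^s}\| = \|B_1\cdots B_n\|$ while the minimum expansion of $\mathcal{A}_n$ on $E^u$ equals $\|B_1\cdots B_n\|^{-1}$; the $N$-domination of $E^s$ by $E^u$ is therefore equivalent to $\|B_1\cdots B_n\|\leq 2^{-1/2}$ for every $n\geq N$. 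To control this norm, I would expand each entry of the lower-triangular matrix $B_1\cdots B_n$ as a sum over non-increasing sequences $a = k_0\geq k_1\geq\cdots\geq k_n = b$; each such path has at most $d-1$ strict decreases, contributing bounded off-diagonal factors, while the remaining ``stay'' steps contribute diagonal entries $b_t(k)$ at various levels. Grouping consecutive stays at each level into windows of length $N'$ and applying the first-step estimate to each window, one obtains a per-path bound of order $C^{d(N'+1)}\,2^{-(n-d)/(2N')}$; summing over the $\leq (nd)^{d-1}$ paths and the $d^2$ entries gives $\|B_1\cdots B_n\|\leq P(n)\cdot 2^{-n/(2N')}$ for some polynomial $P$ depending only on $d,C,N'$, yielding an integer $N$ with the required property.

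The main technical obstacle lies in this last path-expansion step: one must organize the ``stay runs'' at each level into complete $N'$-windows so that their diagonal contribution is dominated by the $1$D decay uniformly in the path, independently of how the runs are distributed across the $d$ levels of the flag. Once this bookkeeping is in place, the exponential decay of $\|B_1\cdots B_n\|$ and the final choice of $N$ are straightforward.
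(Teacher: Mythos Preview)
Your argument is correct and takes a somewhat different route from the paper. The paper bounds the norm on $E^s$ and the co-norm on $E^u$ separately by $K\max_{1\leq r\leq d}|\prod b_i(r)|$ and $K^{-1}\min_{d<r\leq d_0}|\prod b_i(r)|$, and then invokes the full hypothesis (every pair $(j,k)$ with $j\leq d<k$) to compare the two; taking $N=mN'$ with $2^{m-1}>K^2$ finishes. You instead use the symplectic relation between the two blocks of $A_i$ to collapse the problem to the single inequality $\|B_{m+1}\cdots B_{m+n}\|^2\leq\tfrac12$, and you need only the diagonal instances $k=d+j$ of the hypothesis to make each one-dimensional diagonal cocycle contract exponentially. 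Your path expansion with its polynomial prefactor is precisely what justifies (and mildly corrects) the paper's assertion of a ``uniform constant~$K$'': taken literally that claim fails for, say, unipotent triangular matrices, and the honest bound is $K=K(n)$ polynomial in~$n$, which is harmless against the exponential decay. What your approach buys is a more self-contained estimate and a weaker use of the hypothesis; what the paper's buys is brevity and the tidy formula for~$N$. The obstacle you flag at the end is not serious: a non-increasing path has at most $d$ maximal stay-runs, each run of length $m_s$ contributes at most $C^{N'}2^{-m_s/(2N')}$ by your step-1 bound applied with the appropriate starting index, and since $\sum_s m_s\geq n-(d-1)$ the per-path estimate $C^{d(N'+1)}2^{-(n-d)/(2N')}$ follows directly.
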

\begin{proof}
Since the cocycle $(B_i^T)$ is upper triangular and uniformly bounded, there exists a uniform
constant $K>0$ such that for any $n\geq 1$,
the norm of
$B_{i+n}^T\cdots B_{i+1}^T$ is bounded by
$K\max_{1\leq r\leq d} |b_{i+n}(r)\cdots b_{i+1}(r)|$.
Similarly, the co-norm (i.e. the minimal norm of the image of a unit vector)
of $B_{i+n}^{-1}\cdots B_{i+1}^{-1}$ is bounded from below by
$K^{-1}\min_{d+1\leq r\leq d_0} |b_{i+n}(r)\cdots b_{i+1}(r)|$.

Consequently, $ E^s$ is $N$-dominated by $E^u$
provided for all $n\geq N$
$$K^2\max_{1\leq r\leq d} |b_{i+n}(r)\cdots b_{i+1}(r)|\leq \frac 1 2\min_{d+1\leq r\leq d_0} |b_{i+n}(r)\cdots b_{i+1}(r)|.$$
One thus chooses $m\geq 1$ such that $K^2<2^{m-1}$ and sets $N'=mN$.
\end{proof}

Let us assume that $(b_i(j))$ is not $N'$-dominated by the cocycle $(b_i(k))$
for some $1\leq j\leq d<k\leq d_0$.
There exists $i\in \ZZ$ and $n\geq N'$ such that
$$|b_i(j)...b_{i+n-1}(j)| > 1/2 |b_i(k)...b_{i+n-1}(k)|.$$
Since $b_i(d+s)=b_i(s)^{-1}$ for each $1\leq s\leq d$, one also gets
$$|b_i(k-d)...b_{i+n-1}(k-d)| > 1/2 |b_i(j+d)...b_{i+n-1}(j+d)|.$$
Let us assume that $(b_i(k-d))$ is $N'$-dominated by $(b_i(k))$.
This gives
$$|b_i(k)...b_{i+n-1}(k)| \geq  2 |b_i(k-d)...b_{i+n-1}(k-d)|.$$
Combining these inequalities, one deduces
$$|b_i(j)...b_{i+n-1}(j)|>1/2 |b_i(j+d)...b_{i+n-1}(j+d)|,$$
and $(b_i(j))$ is not $N'$-dominated by $(b_i(j+d))$.
We have thus shown:

\begin{lemma}\label{lem-nodom-2}
If there are $1\leq j\leq d<k\leq d_0$ such that
the cocycle $(b_i(j))_{1\leq i\leq\ell}$ is not $N'$-dominated by the cocycle $(b_i(k))_{1\leq i\leq\ell}$,
then there exists $1\leq r\leq d$ such that
$(b_i(r))_{1\leq i\leq\ell}$ is not $N'$-dominated by the cocycle $(b_i(d+r))_{1\leq i\leq\ell}$.
\end{lemma}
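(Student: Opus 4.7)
My plan is to prove Lemma \ref{lem-nodom-2} by a short bookkeeping argument, splitting into two cases according to whether the one-dimensional cocycle $(b_i(k-d))$ is itself $N'$-dominated by $(b_i(k))$. The only non-trivial ingredient is the symplectic symmetry $b_i(d+s)=b_i(s)^{-1}$, valid for $1\leq s\leq d$ by the triangular form \eqref{e.triangle}; everything else is manipulation of scalar inequalities at a single time window.

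First I would fix the data provided by the hypothesis: indices $1\leq j\leq d<k\leq d_0$ together with $i\in\ZZ$ and $n\geq N'$ satisfying
$$
   |b_i(j)\cdots b_{i+n-1}(j)|\;>\;\tfrac12\,|b_i(k)\cdots b_{i+n-1}(k)|.
$$
If $(b_i(k-d))$ is not $N'$-dominated by $(b_i(k))$, then since $1\leq k-d\leq d$ and the matched index is $(k-d)+d=k$, I would immediately take $r:=k-d$ and be done.

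In the opposite case $(b_i(k-d))$ is $N'$-dominated by $(b_i(k))$, so at the very same pair $(i,n)$ pinned above we get $|b_i(k)\cdots b_{i+n-1}(k)|\geq 2\,|b_i(k-d)\cdots b_{i+n-1}(k-d)|$. Next I would invert every factor in the hypothesis using $b_i(d+s)=b_i(s)^{-1}$ to obtain
$$
   |b_i(k-d)\cdots b_{i+n-1}(k-d)|\;>\;\tfrac12\,|b_i(j+d)\cdots b_{i+n-1}(j+d)|,
$$
and then chain these three inequalities to produce
$$
   |b_i(j)\cdots b_{i+n-1}(j)|\;>\;\tfrac12\,|b_i(j+d)\cdots b_{i+n-1}(j+d)|,
$$
which is precisely the failure of $N'$-domination of $(b_i(j))$ by $(b_i(j+d))$; so $r:=j$ works in this case.

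I do not anticipate any real obstacle, since the argument is entirely formal once one is comfortable with the symplectic symmetry imposed by \eqref{e.triangle}. The only point to watch in the second case is that the three inequalities must hold at the same base index $i$ and window length $n$, but this is automatic because the hypothesis pins a specific $(i,n)$ with $n\geq N'$ and, by definition, the $N'$-domination of $(b_i(k-d))$ by $(b_i(k))$ applies at every such pair.
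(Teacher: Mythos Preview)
Your proposal is correct and follows essentially the same route as the paper: both split into the two cases according to whether $(b_i(k-d))$ is $N'$-dominated by $(b_i(k))$, and in the nontrivial case chain the same three scalar inequalities (hypothesis, symplectic symmetry $b_i(d+s)=b_i(s)^{-1}$, and domination at the fixed window $(i,n)$) to conclude with $r=j$. Your write-up is in fact slightly more careful than the paper's, since you make the case split explicit and flag that all three inequalities live at the same $(i,n)$.
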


\paragraph{\it The perturbation.}
Let us assume now that $(b_i(r))_{1\leq i\leq\ell}$ is not $N'$-dominated by $(b_i(d+r))_{1\leq i\leq\ell}$.
One can decompose the stable and unstable bundles as follows
$$\RR^d\times \{0\}^d=\RR^{r-1}\times \RR \times\RR^{d-r}\times \{0\}=E^s_1\oplus E_2^s\oplus E^s_3,$$
$$\{0\}^d\times \RR^d=\{0\}^d\times\RR^{r-1}\times \RR \times\RR^{d-r}=
E^u_1\oplus E_2^u\oplus E_3^u.$$
The spaces $E_*:=E^s_*\oplus E^u_*$ for $*=1, 2$ or $3$ are symplectic.
In the coordinates $E_1\oplus E_2\oplus E_3$,  the cocycle takes the form
$$A_i=\begin{pmatrix} D_{1,i} & D_{2,i} & D_{3,i}\\ 0 & D_{5,i} & D_{6,i}\\
0 & 0 & D_{9,i}\end{pmatrix}.$$
Each matrix 
$$D_{5,i}=\begin{pmatrix} b_i(r) & 0\\ 0 & b_i(d+r)\end{pmatrix}$$
is two-dimensional, symplectic, and diagonal; by our assumptions it has no
$N'$-dominated splitting
and the composition
$D_{5,\ell}\dots D_{5,1}$ is hyperbolic.
The matrices $D_2$ have the form 
$$D_{2,i}=\begin{pmatrix} V_i & 0\\ 0 & V'_i\end{pmatrix},$$
where $V_i$ and $V'_i$ are $1\times (r-1)$ matrices.

Let us apply Lemma~\ref{l.angle-2D} to the cocycle $(D_{5,i})$:
this gives an $\varepsilon'$-path of perturbations $(D_{5,i}(t))$.
We set $U_i(t)=D_{5,i}^{-1}\cdot D_{5,1}(t)$ and define the symplectic matrices
$$A_i(t)=A\cdot \begin{pmatrix} I_{r-1} &  0& 0\\ 0 & U_i(t) & 0\\
0 & 0 & I_{d-r}\end{pmatrix}=\begin{pmatrix} D_1 & D_2\cdot U_i(t) & D_3\\ 0 & D_{5,i}(t) & D_6\\
0 & 0 & D_9\end{pmatrix}.$$
If $\varepsilon'$ has been chosen small enough, this is a $\varepsilon$-path of perturbations.
By construction, the modulus of the eigenvalues are unchanged, hence the cocycle
is hyperbolic for each $t\in [0,1]$.

By construction, there exists some $1\leq j\leq\ell$ such that the stable and unstable spaces at index $j$ of the cocycle $(D_{5,i}(1))$ contain nonzero vectors $w^s,w^u\in E^s_2\oplus E^u_2$ which satisfy $\angle(w^s,w^u)\leq\varepsilon'$.

Lemma \ref{l.angle-2D} keeps $E^s_1\oplus E^s_2$ and therefore $w^s$ in the stable space of $(A_i(1))$. As $E_1\oplus E_2$ remains invariant for $(A_i(1))$,  the unstable space of $(D_{5,i}(1))$ lifts inside $E^1\oplus E^2$ to that of  $(A_i(1))$. Thus we get an unstable vector for $(A_i(1))$
of the form $w^u+v^s_1+v^u_1$ with $v^*_1\in E^*_1$.

As $E^u_1$ is contained in the unstable space of $(A_i(1))$,
the vector $\bar w^u:=(w^u+v^s_1+v^u_1)-v^u_1$ is in the unstable space for $(A_i(1))$. 
As $E^s_1$ is an invariant stable subspace for $(A_i(1))$, the vector $\bar w^s:=w^s+v^s_1$ is in the stable space. 

To conclude the proof of the theorem, observe that $\bar w^u-\bar w^s=w^u-w^s$ and $\|\bar w^*\|\geq\|w^*\|$ for $*=u,s$ so $\angle(\bar w^u,\bar w^s)\leq\angle(w^u,w^s)\leq\varepsilon'$. 
\end{proof}

\subsection{Mixing the exponents}\label{s.mixing}

The lack of strong domination leads to further  perturbations making all  
stable (resp. unstable) eigenvalues to have equal modulus. 
The next statement has been proved in the dissipative (and volume-preserving) setting in~\cite[Theorem 4.1]{BochiBonatti}.

\begin{theorem}[Mixing the exponents]\label{t.BochiBonatti}
For any $d_0\geq 1$, $C>1$, $\varepsilon>0$, there exists $N \geq 1$ with the following property.
For any hyperbolic periodic cocycle $(A_i)_{i\in \ZZ}$ in $GL(d_0,\RR)$, $SL(d_0,\RR)$ or $Sp(d_0,\RR)$ bounded by $C$, with period $\pi$
and no $N$-dominated splitting, there exists an $\varepsilon$-path of perturbations $(A_i(t))_{t\in [0,1]}$ with period $\ell$, a multiple of $\pi$, such that:
\begin{itemize}
\item[--] $\cA(t):=A_\ell(t)\dots A_1(t)$ is hyperbolic for any $t\in[0,1]$,
\item[--] $t\mapsto |\det(\cA(t)_{|E^s})|$ and $t\mapsto |\det(\cA(t)_{|E^u})|$ are constant in $t\in[0,1]$,
\item[--] the stable (resp. unstable) eigenvalues of $\cA(1)$ have the same moduli.
\end{itemize}
\end{theorem}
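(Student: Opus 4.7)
The plan is to reduce the symplectic case to the already established dissipative version of Theorem~\ref{t.BochiBonatti}, exploiting the fact that in a symplectic cocycle the stable and unstable eigenvalues are linked by the involution $\lambda\mapsto\lambda^{-1}$: mixing the moduli on the stable side automatically mixes them on the unstable side, and preserving the determinant on one side preserves it on the other.

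First, applying Corollary~\ref{c.real} along a small initial portion of the error budget, we may assume that $\cA=A_\ell\cdots A_1$ has $d_0$ distinct real eigenvalues with moduli in $(0,1/2)\cup(2,+\infty)$, with hyperbolicity preserved along this preliminary path. Then, following the reduction performed in the proof of Theorem~\ref{t.angle}, Corollary~\ref{c.change-basis2} together with Gram--Schmidt orthonormalization produces a uniformly bounded symplectic change of coordinates after which each $A_i$ has the block form
$$A_i=\begin{pmatrix} B_i^T & C_i \\ 0 & B_i^{-1} \end{pmatrix},$$
the stable Lagrangian bundle being $\RR^d\times\{0\}^d$ and the stable eigenvalues of $\cA$ coinciding with those of $B_\ell\cdots B_1$.

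Second, any $N'$-dominated splitting of the $d$-dimensional dissipative cocycle $(B_i)\in GL(d,\RR)$ would, via the block-triangular/transpose--inverse structure of $A_i$, induce an $N$-dominated splitting of $(A_i)$ for some $N=N(N',C,d_0)$. Choosing the constant $N$ in the statement sufficiently large and applying the dissipative Bochi--Bonatti theorem to $(B_i)$ yields a period $\ell$ (a multiple of $\pi$) and an $\varepsilon'$-path $(\tilde B_i(t))_{t\in[0,1]}$ with $\tilde B_i(0)=B_i$ such that the product $\tilde B_\ell(1)\cdots\tilde B_1(1)$ has all eigenvalues of equal modulus, $|\det(\tilde B_\ell(t)\cdots\tilde B_1(t))|$ is constant in $t$, and the cocycle stays hyperbolic along the path. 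Setting $U_i(t):=\tilde B_i(t)B_i^{-1}$, close to the identity, we define the symplectic perturbations
$$A_i(t):=A_i\cdot\begin{pmatrix} U_i(t)^T & 0 \\ 0 & U_i(t)^{-1}\end{pmatrix},$$
which belong to $Sp(d_0,\RR)$ by~\eqref{e.symplecticmatrix} and whose upper-left block is $\tilde B_i(t)^T$ while the lower-right block is $\tilde B_i(t)^{-1}$. The stable spectrum of $\cA(t)$ thus coincides with that of $\tilde B_\ell(t)\cdots\tilde B_1(t)$, and the unstable spectrum is its pointwise reciprocal.

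The main obstacle is to verify that hyperbolicity persists along the entire path. Since the determinant $|\det(\tilde B_\ell(t)\cdots\tilde B_1(t))|$ is constant in $t$ and equal to the product of the original stable eigenvalue moduli---bounded above by $2^{-d}$ by the preliminary reduction---the geometric mean of the stable eigenvalue moduli of $\cA(t)$ stays strictly below $1$; moreover the Bochi--Bonatti construction proceeds by mixing pairs of eigenvalues that remain on the same side of the unit circle, so no eigenvalue crosses it along the path. The $\varepsilon$-smallness of the symplectic path follows from that of the dissipative path up to a multiplicative constant depending only on $C$ and $d_0$, using $\|U_i(t)-I\|\leq\|\tilde B_i(t)-B_i\|\cdot\|B_i^{-1}\|$ and the analogous estimate for $U_i(t)^{-1}-I$. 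Finally $|\det(\cA(t)|_{E^s})|=|\det(\tilde B_\ell(t)\cdots\tilde B_1(t))|$ is constant in $t$, and the corresponding constancy on $E^u$ follows by taking the reciprocal.
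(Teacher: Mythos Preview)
Your reduction to the dissipative case via the block form $A_i=\begin{pmatrix} B_i^T & C_i\\ 0 & B_i^{-1}\end{pmatrix}$ is the right starting point, and that part of the argument coincides with the paper's Lemma~\ref{c.simple-case}. The gap is the sentence ``any $N'$-dominated splitting of the $d$-dimensional dissipative cocycle $(B_i)$ would, via the block-triangular/transpose--inverse structure of $A_i$, induce an $N$-dominated splitting of $(A_i)$ for some $N=N(N',C,d_0)$.'' This implication is false, and it is exactly the difficulty the paper's machinery is built to circumvent.

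Here is a counterexample. Take $d_0=4$, $C=8$, $C_i=0$, and for any even period $\pi$ let $B_i$ be diagonal with entries $(a_i,b_i)=(\tfrac12,2)$ for $1\le i\le \pi/2$ and $(a_i,b_i)=(\tfrac18,\tfrac14)$ for $\pi/2<i\le\pi$. The ratios $a_i/b_i\in\{\tfrac14,\tfrac12\}$ make $(B_i)$ $1$-dominated, and the period products $(1/16)^{\pi/2},(1/2)^{\pi/2}$ lie in $(0,\tfrac12)$ so the cocycle is hyperbolic and already satisfies the separation from Corollary~\ref{c.real}. But over the window $\{1,\dots,\pi/2\}$ one has $\prod a_i=(1/2)^{\pi/2}=\prod b_i^{-1}$ and $\prod b_i=2^{\pi/2}=\prod a_i^{-1}$, so each of the three candidate dominated splittings of $(A_i)$ (indices $1$, $2$, $3$) fails on this window. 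Thus $(A_i)$ has no $N$-dominated splitting for any $N\le\pi/2$, with $\pi$ arbitrary, while $(B_i)$ remains $1$-dominated with $C$ fixed.

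This is precisely why the paper does \emph{not} attempt to deduce non-domination of $(B_i)$ directly from non-domination of $(A_i)$. Instead, Proposition~\ref{p.BochiBonatti-reduced} (together with Lemmas~\ref{c.rectify}--\ref{l.domination-symplectic}) shows: if the restriction to $E^s$ has an $N_0$-dominated splitting $E^{ss}\oplus E^{cs}$, then either $E^{ss}\oplus(E^{cs}\oplus E^{cu})$ is uniformly dominated---which by Lemma~\ref{l.domination-symplectic} forces a uniform dominated splitting of $(A_i)$---or one can \emph{perturb} the cocycle (tilting $E^{cs}$ slightly into $E^{cu}$ at one index and back at another, over a possibly much larger period $\ell$) so as to break the $N_0$-domination of $E^{ss}\oplus E^{cs}$ while preserving hyperbolicity and the stable determinant. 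Iterating this over the possible values of $\dim E^{ss}$ removes all strong domination inside $E^s$, after which your argument (Lemma~\ref{c.simple-case}) finishes the proof. The perturbative step encoded in Lemma~\ref{l.break-domination} and Lemma~\ref{c.rectify} is the missing idea.
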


\begin{remark}\label{r.BochiBonatti}
In fact, a stronger statement is proven in \cite{BochiBonatti} for $GL(d_0,\RR)$ and $SL(d_0,\RR)$.  Namely,
there exists $T\geq 1$ which only depends on $d_0,C,\varepsilon,$ and  $N$, such that
any multiple $\ell\geq T$ of $\pi$ satisfies the conclusion of Theorem~\ref{t.BochiBonatti}.
In particular, if the period $\pi$ is larger than $\ell$, one can choose $\ell=\pi$.
We do not know if this uniformity holds in $Sp(d_0,\RR)$.
{Note also that \cite{BochiBonatti} allows to realize other spectra for non-dominated
cocycles in $GL(d_0,\RR)$. We do not know to what extend this generalizes to
the symplectic case.}
\end{remark}

It remains to prove the symplectic case of Theorem \ref{t.BochiBonatti}. The proof is by reduction to the dissipative case.
We will see that, in the symplectic category,  any hyperbolic cocycle without strong dominated splitting admits a perturbation (maybe with a larger period) whose restriction to its stable subbundle is also without any strong dominated splitting. The dissipative case of the theorem can then be applied 

Let us fix numbers $C,\varepsilon>0$. In this section we always consider hyperbolic cocycles $A=(A_i)_{i\in\ZZ}$ in $Sp(d_0,\RR)$ bounded by $2C$. Choosing $\varepsilon>0$ small enough, any $\varepsilon$-perturbation of a cocycle bounded by $C$ is still bounded by $2C$.
\medskip

\begin{lemma}\label{c.simple-case}
There exists $N_0\geq 1$ such that if the stable bundle of $A=(A_i)$ has no $N_0$-dominated splitting, then
there exists an $\varepsilon/2$-path of perturbations of $A$ (possibly with larger period)
satisfying the conclusion of Theorem~\ref{t.BochiBonatti} in $Sp(d_0,\RR)$.
\end{lemma}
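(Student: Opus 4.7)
The plan is to reduce to the dissipative case of Theorem~\ref{t.BochiBonatti} applied to the cocycle induced by $A$ on its stable Lagrangian bundle. Using Corollary~\ref{c.change-basis2}, I first apply a symplectic change of coordinates, bounded by a constant depending only on $d_0$, sending the stable bundle of $\cA := A_\ell\cdots A_1$ to $\RR^d\times\{0\}^d$. In these coordinates each $A_i$ takes the upper block-triangular form
$$A_i = \begin{pmatrix} B_i^T & C_i \\ 0 & B_i^{-1}\end{pmatrix}$$
for some contracting cocycle $(B_i)$ in $GL(d,\RR)$ bounded by some constant $C'$ depending on $C$ and $d_0$.

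I then set $N_0 := N(d, C', \varepsilon')$, where $N$ is the integer produced by the dissipative case of Theorem~\ref{t.BochiBonatti} and $\varepsilon' > 0$ is chosen small enough to ensure that the lift defined below is an $\varepsilon/2$-perturbation of $A$. The hypothesis that the stable bundle of $A$ has no $N_0$-dominated splitting transfers, through the bounded change of coordinates (the multiplicative distortion is absorbed into $N_0$), to the same property for $(B_i)$ in $GL(d,\RR)$. Applying the dissipative Theorem~\ref{t.BochiBonatti} to $(B_i)$ yields an $\varepsilon'$-path $(\tilde B_i(t))_{t\in[0,1]}$ with period $\ell$ a multiple of $\pi$ such that $\cB(t) := \tilde B_\ell(t)\cdots\tilde B_1(t)$ remains hyperbolic, $|\det\cB(t)|$ is constant in $t$, and all eigenvalues of $\cB(1)$ share the same modulus. (Since $(B_i)$ is entirely contracting, the whole space plays the role of the stable bundle; there is no unstable part to worry about.)

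To lift to a symplectic perturbation while preserving the block-triangular form, I define
$$A_i(t) := A_i \cdot D_i(t), \qquad D_i(t) := \begin{pmatrix} U_i(t)^T & 0 \\ 0 & U_i(t)^{-1}\end{pmatrix}, \qquad U_i(t) := \tilde B_i(t)B_i^{-1}.$$
Each $D_i(t)$ is symplectic (it is the standard symplectic lift of $GL(d,\RR)$) and satisfies $D_i(0) = I$, so $A_i(0) = A_i$. A direct computation gives
$$A_i(t) = \begin{pmatrix} \tilde B_i(t)^T & C_i U_i(t)^{-1} \\ 0 & \tilde B_i(t)^{-1}\end{pmatrix},$$
which preserves the upper block-triangular form. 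For $\varepsilon'$ small enough, $(A_i(t))$ is an $\varepsilon/2$-path of symplectic perturbations of $(A_i)$.

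To conclude, I verify the conclusions of Theorem~\ref{t.BochiBonatti} for $(A_i(t))$: the bundle $\RR^d\times\{0\}^d$ is invariant and contracted under $(A_i(t))$ for every $t$, hence remains the stable bundle of $\cA(t)$; its induced dynamics is conjugate to $\cB(t)^T$, so $|\det(\cA(t)|_{E^s})|$ is constant in $t$ and the stable eigenvalues of $\cA(1)$ all share the same modulus. The symplectic upper-triangular structure then forces the unstable eigenvalues to be the reciprocals of the stable ones, so they too have a common modulus and $|\det(\cA(t)|_{E^u})|$ is constant; hyperbolicity persists throughout the path. The main delicate point is the explicit symplectic lift $A_i\cdot D_i(t)$ that preserves the block-triangular form — once it is in place, the verification reduces to bookkeeping inherited from the dissipative statement.
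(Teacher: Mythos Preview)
Your proof is correct and follows essentially the same approach as the paper: reduce to block-triangular form via Corollary~\ref{c.change-basis2}, apply the dissipative case of Theorem~\ref{t.BochiBonatti} to the $GL(d,\RR)$-cocycle $(B_i)$, and lift the resulting path back to $Sp(d_0,\RR)$ via the symplectic block-diagonal matrices $D_i(t)$, exactly as in the proof of Proposition~\ref{p.real}. The paper's proof is terser (it simply says ``one concludes as in Section~\ref{s.real}''), but your explicit verification of the lift $A_i(t)=A_i\cdot D_i(t)$ and of the three conclusions is the intended unpacking.
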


\begin{proof}
As before, one uses Corollary~\ref{c.change-basis2} to reduce to the case of cocycles of the form 
$$\begin{pmatrix} B_i^T & C_i\\ 0 & B_i^{-1} \end{pmatrix},$$ where $\RR^{d}\times \{0\}^{d}$
is the stable space. 
Since the cocycle $(B_i)$ has no strong dominated splitting, the version of Theorem~\ref{t.BochiBonatti} for $GL(d,\RR)$ provides  a path of perturbations $(B_i(t))$ (with possibly larger period $\ell$),
 whose composition
$\cB(t)=B_\ell(t)\dots B_1(t)$ has constant Jacobian, only eigenvalues with modulus smaller than $1$,
and such that $\cB(1)$ has all its eigenvalues with the same modulus.
One concludes as in Section \ref{s.real}.
\end{proof}

The key to this reduction is the next proposition
{which analyzes the dominated decompositions
$E^s=E^{ss}\oplus E^{cs}$ of the stable spaces.
The domination is not quantified and only requires that the eigenvalues
along $E^{ss}$ have smaller moduli than along $E^{cs}$.}

\begin{proposition}\label{p.BochiBonatti-reduced}
Given $N_0$, $\varepsilon'>0$, $C>1$, and $j\in \{1,\dots,d-1\}$, there exists an integer $N'=N'(N_0,j,\varepsilon', C)\geq 1$
 with the following property. Let  $A=(A_i)$ be any hyperbolic cocycle in $Sp(d_0,\RR)$ bounded by $2C$
 with period $\pi$ such that its composition $A_\pi\dots A_1$ has $d_0$
 eigenvalues with pairwise distinct moduli. If {the stable space of} $A$ admits a splitting $E^s=E^{ss}\oplus E^{cs}$ with $\dim(E^{ss})=j$, then
\begin{itemize}
\item[--] either $A$ admits a $N'$-dominated splitting,
\item[--] or there exists an  $\varepsilon'$-path of perturbations $A(t)=(A_i(t))$ of $A$ with possibly larger period $\ell$ such that
{\begin{enumerate}
\item\label{i.1} each $\cA(t):=A_\ell(t)\dots A_1(t)$ is hyperbolic,
\item\label{i.3} $A(t)$ preserves $E^{ss}$
and coincides with $A$ on $E^{ss}$, for all $0\leq t\leq 1$,
\item\label{i.4} $\cA(t)$ has
a dominated splitting $E^s(t)=E^{ss}\oplus E^{cs}(t)$ for all $0\leq t\leq 1$,
\item\label{i.2} $t\mapsto |\det(\cA(t)_{|E^s})|$ and $t\mapsto |\det(\cA(t)_{|E^u})|$ are constant in $t$,
\item\label{i.5} the splitting {$E^s(1)=E^{ss}\oplus E^{cs}(1)$} for $A(1)$ is not $N_0$-dominated.
\end{enumerate}}
\end{itemize}
\end{proposition}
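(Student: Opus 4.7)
The strategy is to exploit the symplectic structure to write $A$ as a product of two invariant symplectic sub-cocycles, leaving untouched the one containing $E^{ss}$ and perturbing only the other via the dissipative version of Theorem~\ref{t.BochiBonatti}. Since $E^s$ is Lagrangian and the decomposition $E^s=E^{ss}\oplus E^{cs}$ is $A$-invariant, symplectic duality produces an $A$-invariant decomposition $E^u=E^{cu}\oplus E^{uu}$ by setting $E^{cu}:=(E^{cs})^\omega\cap E^u$ and $E^{uu}:=(E^{ss})^\omega\cap E^u$. The sums $V_1:=E^{ss}\oplus E^{cu}$ and $V_2:=E^{cs}\oplus E^{uu}$ are invariant, mutually $\omega$-orthogonal, and each symplectic, so $A=A|_{V_1}\oplus A|_{V_2}$ is a symplectic product. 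Perturbing only the $V_2$-factor will automatically guarantee (\ref{i.3}) and reduce (\ref{i.2}) to controlling the Jacobians on each symplectic half.

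By Corollary~\ref{c.change-basis2} applied inside $V_2$, a bounded symplectic change of coordinates brings $E^{cs}$ to the standard Lagrangian position, so that $A|_{V_2}$ takes the upper-triangular form $\begin{pmatrix} B_i^T & C_i\\ 0 & B_i^{-1}\end{pmatrix}$ for some cocycle $(B_i)$ in $GL(d-j,\RR)$. Fix an auxiliary integer $N_0'$, large with respect to $N_0$, $j$, $C$, and $\varepsilon'$. If $(B_i)$ admits an $N_0'$-dominated splitting, then assembling this splitting with the $V_1\oplus V_2$ and $E^s\oplus E^u$ decompositions of $A$ produces an $N'$-dominated splitting of $A$ (with $N'$ comparable to $N_0'$), which gives the first alternative of the conclusion. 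Otherwise, apply the dissipative case of Theorem~\ref{t.BochiBonatti} to $(B_i)$: after passing to a suitable multiple $\ell$ of $\pi$, it provides an $\varepsilon'/2$-path of perturbations $(B_i(t))$ such that $\cB(t):=B_\ell(t)\cdots B_1(t)$ remains hyperbolic, $|\det\cB(t)|$ is constant in $t$, and all eigenvalues of $\cB(1)$ have the same modulus. Lift this to a symplectic path by setting $A_i(t)|_{V_1}:=A_i|_{V_1}$ and $A_i(t)|_{V_2}:=\begin{pmatrix} B_i(t)^T & C_i\\ 0 & B_i(t)^{-1}\end{pmatrix}$.

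Conditions (\ref{i.1})--(\ref{i.4}) then follow readily: $\cA(t)$ is hyperbolic because it is hyperbolic on each factor; $A(t)$ coincides with $A$ on $V_1$ and hence on $E^{ss}$; the Jacobians on $E^s$ and $E^u$ are products of the unchanged Jacobians on the $V_1$-pieces with the Jacobians on the $V_2$-pieces, which are kept constant by the dissipative theorem; and the new stable bundle $E^{cs}(t)$ of the cocycle on $V_2$ remains dominated by $E^{ss}$ since its eigenvalue moduli stay above those on $E^{ss}$. The main obstacle is condition (\ref{i.5}): after equalising the $E^{cs}$-spectrum to the geometric mean $\mu:=(|\lambda_{j+1}|\cdots|\lambda_d|)^{1/(d-j)}$, the asymptotic ratio governing domination becomes $|\lambda_j|/\mu$, which is \emph{smaller} than the original $|\lambda_j|/|\lambda_{j+1}|$, so mixing exponents on $E^{cs}$ actually \emph{reinforces} asymptotic domination. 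Non-$N_0$-domination must therefore be obtained from a cumulative bound on $\mu/|\lambda_j|$: under the assumption that $A$ admits no $N'$-dominated splitting, every consecutive eigenvalue ratio $|\lambda_{k+1}|/|\lambda_k|$ lies below $2^{1/N'}$, and telescoping yields $\mu/|\lambda_j|\leq|\lambda_d|/|\lambda_j|<2^{(d-j)/N'}$, so choosing $N'$ of order $(d-j)N_0$ forces $\mu/|\lambda_j|<2^{1/N_0}$ and hence the failure of $N_0$-domination. Calibrating the three thresholds $N_0,N_0',N'$ coherently, and checking that the lack of $N'$-dominated splitting on $A$ transfers through the triangular reduction to the lack of $N_0'$-dominated splitting on $(B_i)$, is the delicate quantitative content of the argument.
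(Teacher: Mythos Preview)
Your symplectic decomposition $V_1\oplus V_2$ is correct, but the argument for condition~(\ref{i.5}) has a genuine gap that cannot be repaired within this strategy.

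The key error is the inference ``$A$ admits no $N'$-dominated splitting $\Rightarrow$ every consecutive eigenvalue ratio satisfies $|\lambda_{k+1}|/|\lambda_k|<2^{1/N'}$''. This is false: domination is a uniform condition on finite-time norms at every base point, not on eigenvalue moduli at the period; a periodic cocycle can have widely separated eigenvalues yet fail to be $N'$-dominated for any prescribed $N'$. The converse step---deducing non-$N_0$-domination of $E^{ss}\oplus E^{cs}(1)$ from the bound $\mu/|\lambda_j|<2^{1/N_0}$---conflates the same two notions in the other direction. The case $\dim E^{cs}=d-j=1$ exposes the failure completely: the cocycle $(B_i)$ is then one-dimensional, Theorem~\ref{t.BochiBonatti} is vacuous, your perturbation is the identity, and yet $E^{ss}\oplus E^{cs}$ may be $N_0$-dominated while $A$ has no global $N'$-dominated splitting. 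Your own remark that equalizing the $E^{cs}$-spectrum \emph{reinforces} asymptotic domination is precisely the obstruction; spectral averaging inside $V_2$ gives no handle on the finite-time interaction between $E^{ss}$ and $E^{cs}$. (Incidentally, the lift $\begin{pmatrix} B_i(t)^T & C_i\\ 0 & B_i(t)^{-1}\end{pmatrix}$ is not symplectic unless $C_i$ is adjusted; compare the proof of Proposition~\ref{p.real}.)

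The paper proceeds by a different mechanism. It first disposes of the case where $E^{ss}\oplus E^{cs}$ is already not $N_0$-dominated, then shows (Lemma~\ref{l.domination-symplectic}, using the symplectic spectral symmetry) that if $E^{ss}\oplus(E^{cs}\oplus E^{cu})$ is $\widetilde N$-dominated then $A$ itself is $N'$-dominated. In the remaining case, Lemma~\ref{l.break-domination} exploits the lack of $\widetilde N$-domination to locate a unit vector $u^c\in E^{cs}_{i_0}\oplus E^{cu}_{i_0}$ whose $N_0$-step growth is comparable to that of some $u^{ss}\in E^{ss}_{i_0}$ and whose $m_0$-step image lies close to $E^{cs}$. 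Two localized perturbations (Lemma~\ref{c.rectify}), embedded in a long period $\ell$, then tilt $E^{cs}$ so that the new bundle $E^{cs}(1)$ contains $u^c$; the pair $(u^{ss},u^c)$ now directly witnesses the failure of $N_0$-domination, while $E^{ss}$ and the action on it are untouched. The point is a geometric tilt of $E^{cs}$ toward $E^{cu}$ guided by a finite-time non-domination estimate, not a spectral manipulation on $E^{cs}$.
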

\noindent

\begin{proof}[Proof of Theorem~\ref{t.BochiBonatti} from Proposition~\ref{p.BochiBonatti-reduced}]

Lemma \ref{c.simple-case} gives an integer $N_0$, depending on $\varepsilon$. We will apply the proposition for each possible $E^{ss}$-dimension $1\leq j\leq d-1$ to remove any $N_0$-dominated splitting in the stable subbundle by an $\varepsilon/2$-perturbation. The $\varepsilon/2$-perturbation given by Lemma \ref{c.simple-case} will finish the proof. More precisely, {we pick $0<\varepsilon_{d-1}<\varepsilon/2d$ and $N_{d-1}\geq N'(N_0,d-1,\varepsilon_{d-1},C)$ and then, inductively, select $N_j,\varepsilon_j$ for $1\leq j<d-1$, given $N_{j+1}$, by:}
 \begin{itemize}
     \item[--] $0<\varepsilon_{j}<\varepsilon/2d$ so small that non $2N_{j+1}$-dominated splitting for some cocycle implies non $N_{j+1}$-dominated splitting for any $\varepsilon_{j}$-perturbation; and
     \item[--] $N_{j}\geq N'(N_0,j,\varepsilon_{j},C)$ with $N_j\geq 2N_{j+1}$. 
 \end{itemize}
Now, given a cocycle $A$ without $N_1$-dominated splitting, we inductively get cocycles $A^{(1)},\dots,A^{(d)}$ by setting $A^{(1)}:=A$ and taking, for $2\leq j\leq d$, $A^{(j)}$ to be an $\varepsilon_{j-1}$-perturbation of $A^{(j-1)}$ satisfying:
 \begin{itemize}
     \item [] $A^{(j)}$ has no $N_0$-dominated splitting of index strictly less than $j$ inside $E^s$ and, if $j<d$, no $N_j$-dominated splitting in $\RR^{2d}$.
 \end{itemize}
In particular, the distance from $A$ to $A^{(d)}$ is less than $\varepsilon_1+\dots+\varepsilon_{d-1}<\varepsilon/2$ and the restriction $A^{(d)}|E^s$ has no $N_0$-dominated splitting. Theorem~\ref{t.BochiBonatti} now follows from Lemma~\ref{c.simple-case}.
\end{proof}

It remains to prove Proposition \ref{p.BochiBonatti-reduced}.
Note that the symmetry {of the spectrum of a symplectic cocycle}
implies that there also exists a {dominated} splitting $E^u=E^{cu}\oplus E^{uu}$ with $\dim E^{uu}=\dim E^{ss}$.
We prove three preliminary lemmas.

\begin{lemma}\label{c.rectify}
For any $\varepsilon'>0$ and any integer$N_0$, there exists $\eta>0$ with the following property.
If $(A_i)$ has a {dominated} splitting $E^{ss}\oplus E^{cs}\oplus E^{cu}\oplus E^{uu}$
with $\dim E^{ss}=j$ such that $E^s=E^{ss}\oplus E^{cs}$ is
$N_0$-dominated,
then for any $k\in\ZZ$ and any space $\widehat E^{cs}_{k+1}\subset E^{cs}_{k+1}\oplus E^{cu}_{k+1}$
that is $\eta$-close to $E^{cs}_{k+1}$ and with equal dimension,
there exists an $\varepsilon'$-path of perturbations
$(A_i(t))$ in $Sp(2d,\RR)$ such that
\begin{itemize}
\item[--] $A_{k}(1). E^{cs}_{k}=\widehat E^{cs}_{k+1}$, and
\item[--] the maps $A_i(t)$ restricted to $E^{ss}$ are constant in $t$.
\end{itemize}
\end{lemma}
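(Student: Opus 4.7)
The plan is to perform the perturbation at the single index $k$ by setting $A_k(t) := L(t)\, A_k$ and $A_i(t) := A_i$ for $i \neq k$, where $L(t)$ is a symplectic path from $\Id$ to a map $L$ which fixes $V_1 := E^{ss}_{k+1} \oplus E^{uu}_{k+1}$ pointwise and carries $E^{cs}_{k+1}$ onto $\widehat E^{cs}_{k+1}$ inside $V_2 := E^{cs}_{k+1} \oplus E^{cu}_{k+1}$.

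The first step is to unfold the symplectic structure of the dominated splitting. Since $\omega$ is $(A_i)$-invariant and the dominated splitting assigns definite gaps between the growth rates along the four subbundles, the only non-degenerate pairings $\omega(E_\alpha,E_\beta)$ are the ones that respect the symplectic symmetry of the spectrum, namely $(ss,uu)$ and $(cs,cu)$. Hence $V_1$ and $V_2$ are mutually $\omega$-orthogonal symplectic subspaces of $\RR^{2d}$, $E^{ss}$ and $E^{uu}$ form a Lagrangian decomposition of $V_1$, and $E^{cs}$, $E^{cu}$ form a Lagrangian decomposition of $V_2$. A symplectic $L$ must send Lagrangians to Lagrangians, so I read the hypothesis as saying that $\widehat E^{cs}_{k+1}$ is a Lagrangian of $V_2$, $\eta$-close to $E^{cs}_{k+1}$.

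The second step is the uniform construction of $L$. The $N_0$-domination of $E^s = E^{ss} \oplus E^{cs}$ and the bound $\|A_i^{\pm 1}\| \le 2C$ yield a uniform lower bound on the angle between $E^{ss}$ and $E^{cs}$; by symplectic symmetry this transports to a lower bound on the angle between $E^{uu}$ and $E^{cu}$, while hyperbolicity together with the ambient dominated splitting controls the remaining angles. Applying Corollary~\ref{c.change-basis2} inside $V_1$ and $V_2$ separately produces a symplectic change of coordinates at index $k+1$, of uniformly bounded norm, in which the four subbundles become the standard coordinate Lagrangians. In these coordinates $\widehat E^{cs}_{k+1}$ is the graph over $E^{cs}$ of a symmetric matrix $S$ (the Lagrangian condition translates to symmetry, cf.~\eqref{e.symplecticmatrix}) of operator norm $O(\eta)$, and one takes
$$
L(t) \; := \; \Id_{V_1} \oplus \begin{pmatrix} I & 0 \\ tS & I \end{pmatrix},
$$
which is symplectic for each $t \in [0,1]$.

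Pulling $L(t)$ back to the original coordinates inflates $\|L(t)-\Id\|$ by a factor controlled by the bound from the second step; choosing $\eta$ small enough, depending only on $\varepsilon'$, $N_0$, $C$ and $d$, therefore guarantees $\sup_{t}\|L(t)-\Id\|<\varepsilon'/(4C)$, so that $(A_i(t))$ is an $\varepsilon'$-path of perturbations in $Sp(2d,\RR)$. By construction $A_k(1)\,E^{cs}_k = L(E^{cs}_{k+1}) = \widehat E^{cs}_{k+1}$, and since $L(t)$ fixes $E^{ss}_{k+1}$ pointwise while $A_k(E^{ss}_k)=E^{ss}_{k+1}$, the restriction $A_k(t)|_{E^{ss}_k}$ does not depend on $t$; for $i\neq k$ this is automatic. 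The main obstacle is the uniform angle control in the second step: one must extract from the single quantitative hypothesis ($N_0$-domination of $E^s=E^{ss}\oplus E^{cs}$), together with hyperbolicity and the cocycle bound, a universal lower bound on all four relevant angles, so that a single $\eta$ can be made to work for every index $k$.
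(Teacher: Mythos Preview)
Your construction and the paper's share the same core idea---compose $A_k$ with a small symplectic shear---but the two proofs normalize different pieces of the splitting, and this leads to a genuine gap in your argument.

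The paper conjugates only $E^{ss}$ and $E^{cs}$ into standard position (both inside the Lagrangian $\RR^d\times\{0\}^d$), and then uses a single $2d\times 2d$ shear $\begin{pmatrix} I & 0\\ tB & I\end{pmatrix}$ with $E^{ss}\subset\ker B$. The norm of this conjugacy is controlled solely by the angle $\angle(E^{ss},E^{cs})$, and that angle is uniformly bounded below because $E^{ss}\oplus E^{cs}$ is $N_0$-dominated and the cocycle is bounded by $2C$. No other angle enters.

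Your route instead normalizes the full four-bundle decomposition via the symplectic splitting $\RR^{2d}=V_1\oplus V_2$ and applies Corollary~\ref{c.change-basis2} inside each factor. For that conjugacy to be uniformly bounded you need a uniform lower bound on $\angle(V_1,V_2)$ and on the internal angles $\angle(E^{ss},E^{uu})$, $\angle(E^{cs},E^{cu})$. You argue that symplectic symmetry carries the $N_0$-domination of $E^{ss}\oplus E^{cs}$ to a quantified domination of $E^{cu}\oplus E^{uu}$ (this part is fine), and that ``hyperbolicity together with the ambient dominated splitting controls the remaining angles''. This last claim is the gap: the hypothesis only says the four-bundle splitting is \emph{dominated}, with no quantifier, and hyperbolicity of a bounded cocycle does not bound $\angle(E^s,E^u)$ from below---indeed, Theorem~\ref{t.angle} is precisely the statement that this angle can be made arbitrarily small. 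So $\angle(E^{cs},E^{cu})$ and $\angle(E^{ss},E^{uu})$ are not uniformly controlled over the class of cocycles in the lemma, your change of coordinates is not uniformly bounded, and the resulting $\eta$ would depend on the particular cocycle, contrary to the statement.

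You correctly flag this as ``the main obstacle'', but it is not just an obstacle to be overcome with more care: with your normalization it cannot be overcome, because the required bounds simply are not in the hypotheses. The fix is the paper's: normalize only the two bundles whose angle you actually control, and design the shear so that the kernel condition $E^{ss}\subset\ker B$ replaces the need to know where $E^{uu}$ and $E^{cu}$ sit.
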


\begin{proof}
Since $E^s=E^{ss}\oplus E^{cs}$ is $N_0$-dominated,
the angle between $E^{ss}$ and $E^{cs}$ is uniform.
Arguing as in Corollary~\ref{c.change-basis2},
one finds $C'>0$, depending only on $C,d,N_0$,
such that, after conjugacy by a cocycle in $Sp(2d,\RR)$ and bounded by $C'$, the subspace $E^{ss}$ becomes $\RR^j\times\{0\}^{2d-j}$
and the subspace $E^{cs}$ becomes $\{0\}^j\times \RR^{d-j}\times \{0\}^d$. 
In these new coordinates, there exists a linear map
$B\colon \RR^d\to \RR^d$ such that $\RR^j\times \{0\}^{d-j}\subset \ker(B)$ and
$\widehat E^{cs}_k=\{(u,B(u)):\; u\in E^{cs}_k\}$.
Note that $B$ is (uniformly) close to $0$ when $\eta$ is small.
One builds the path of perturbations by composing 
$A_{k-1}$ with the symplectic matrices, given (in the new coordinates) by:
$$
  U(t)=\begin{pmatrix} I & 0\\ tB & I \end{pmatrix}
  .$$
\end{proof}

We denote the partial compositions $A_{i+n-1}\cdots A_i$ by {$A_i^n$}.

\begin{lemma}\label{l.break-domination}
For any $\eta>0$ and any integer $N_0$, there is $\widetilde N$ with the following property. Let
$(A_i)$ be a cocycle with a {dominated} splitting $E^{ss}\oplus E^{cs}\oplus E^{cu}\oplus E^{uu}$. If $E^s=E^{ss}\oplus E^{cs}$ is $N_0$-dominated and
$E^{ss}\oplus (E^{cs}\oplus E^{cu})$ is not $\widetilde N$-dominated,
then there exist integers $i_0\in \ZZ$, $m_0\geq N_0$ and unit vectors $u^c\in E^{cs}_{i_0}\oplus E^{cu}_{i_0}$,
$u^{ss}\in E^{ss}_{i_0}$ such that
\begin{enumerate}[(a)]
\item the line $A^{m_0}_{i_0}(\RR .u^c)$ is $\eta$-close to $E^{cs}_{i_0+m_0}$, and
\item
$\|A^{N_0}_{i_0}(u^{ss})\|>\frac 1 2 {\|A^{N_0}_{i_0}(u^{c})\|}$.
\end{enumerate}
\end{lemma}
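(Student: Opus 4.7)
My plan is to exploit the failure of $\widetilde N$-domination of $E^{ss}\oplus(E^{cs}\oplus E^{cu})$ to locate a window where the angle between $E^{cs}$ and $E^{cu}$ degenerates, and then extract the required witnesses from it. First, I would use the hypothesis to pick a ``bad window'' $[i_1,i_1+n]$ with $n\geq\widetilde N$ and unit vectors $u^{ss}_1\in E^{ss}_{i_1}$, $w=w^{cs}+w^{cu}\in E^{cs}_{i_1}\oplus E^{cu}_{i_1}$ such that $\|A_{i_1}^n u^{ss}_1\|\geq\tfrac12\|A_{i_1}^n w\|$. Iterating the $N_0$-domination of $E^s$ on sub-windows of length $N_0$ gives $\|A_{i_1}^n u^{ss}_1\|\leq C\cdot 2^{-\lfloor n/N_0\rfloor}\|A_{i_1}^n\hat w^{cs}\|$, where $\hat w^{cs}$ is the unit vector along $w^{cs}$. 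Combined with the hypothesis, this forces $\|A_{i_1}^n w\|\ll\|A_{i_1}^n\hat w^{cs}\|$ once $\widetilde N$ is chosen much larger than $N_0$. Since $w$ is unit, this smallness of the image can only be produced by a near-cancellation of $A_{i_1}^n w^{cs}\in E^{cs}_{i_1+n}$ with $A_{i_1}^n w^{cu}\in E^{cu}_{i_1+n}$, which in turn forces the angle between $E^{cs}_{i_1+n}$ and $E^{cu}_{i_1+n}$ to be small, of order controlled by the ratio $\|A_{i_1}^n u^{ss}_1\|/\|A_{i_1}^n\hat w^{cs}\|$.

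I would then set $i_0+m_0=i_1+n$ with $m_0\geq N_0$ chosen appropriately. For condition~(a), since the eigenvalue moduli along $E^{cu}$ exceed those along $E^{cs}$, forward iteration of any unit vector $u^c\in E^{cs}_{i_0}\oplus E^{cu}_{i_0}$ having a non-trivial $E^{cu}$-component concentrates the direction of $A_{i_0}^{m_0}u^c$ along $E^{cu}_{i_0+m_0}$. The small angle between $E^{cu}_{i_0+m_0}$ and $E^{cs}_{i_0+m_0}$ then yields that $A_{i_0}^{m_0}(\RR u^c)$ lies within $\eta$ of $E^{cs}_{i_0+m_0}$, provided $\widetilde N$ is large enough for this concentration to beat the angle.

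For condition~(b), I would pick $u^c$ unit along the direction in which the $E^{cs}$- and $E^{cu}$-components nearly cancel under $A_{i_0}^{N_0}$. The small-angle regime at $i_0+m_0$, propagated backward by invariance of the two subbundles over $[i_0,i_0+m_0]$, produces such an almost-cancellation direction already at scale $N_0$ at $i_0$: one obtains $u^c$ with $\|A_{i_0}^{N_0}u^c\|$ much smaller than what the $N_0$-domination of $E^s$ forces for generic unit directions in $E^{cs}_{i_0}$. Any unit $u^{ss}\in E^{ss}_{i_0}$ that is not too strongly contracted then satisfies $\|A_{i_0}^{N_0}u^{ss}\|>\tfrac12\|A_{i_0}^{N_0}u^c\|$.

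The main obstacle is coordinating (a) and (b) at a single pair $(i_0,m_0)$: $m_0$ must be long enough for the forward alignment in (a) yet short enough that the short-scale cancellation in (b) survives. I would resolve this via a pigeonhole argument across sub-windows of length $N_0$ inside $[i_1,i_1+n]$, using the telescoping product of sub-window expansion ratios of $w$ balanced against the global lower bound $1/2$ coming from non-domination. This extracts a sub-window where both phenomena coexist and determines $\widetilde N$ explicitly in terms of $N_0$, $C$ and $\eta$.
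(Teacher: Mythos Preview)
Your overall strategy has the right shape (locate a bad window from the failure of $\widetilde N$-domination, then extract $i_0$ by a Pliss-type pigeonhole), but the mechanism you propose for (a) and (b) has two genuine gaps. For (a) you rely on ``forward iteration concentrates $A_{i_0}^{m_0}u^c$ along $E^{cu}_{i_0+m_0}$'' and then on $E^{cu}_{i_0+m_0}$ being close to $E^{cs}_{i_0+m_0}$. Neither step is available. The splitting $E^{cs}\oplus E^{cu}$ is dominated only in the weak sense (ordered eigenvalue moduli at the period), not $N$-dominated for any $N$ depending only on $C$; over a window of length $m_0$ fixed in advance there is no uniform rate forcing alignment with $E^{cu}$. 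And your near-cancellation at $i_1+n$ shows only that the single direction $\RR\cdot A_{i_1}^n w^{cu}\subset E^{cu}_{i_1+n}$ is close to $E^{cs}_{i_1+n}$; when $\dim E^{cu}>1$ this does not make the whole subspace close, so even a vector concentrated in $E^{cu}$ need not be $\eta$-close to $E^{cs}$. For (b), the cancellation you produced lives at scale $n$ at the point $i_1$; propagating it back to a cancellation at scale $N_0$ at $i_0$ again requires control on the relative growth of the $E^{cs}$ and $E^{cu}$ components over a short window, which is not given.

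The paper avoids both issues by constructing $u^c$ differently. A Pliss argument on the one-step ratios finds $i_0$ and a unit vector $\bar u^c\in E^{cs}_{i_0}\oplus E^{cu}_{i_0}$ (a normalized iterate of your $w$) with $\|A_{i_0}^{k}u^{ss}\|>\tfrac23\|A_{i_0}^{k}\bar u^c\|$ \emph{simultaneously} for $k=N_0$ and for a fixed $k=m_0$ chosen from $\eta,N_0,C$. One then sets $u^c$ to be the unit vector along $\bar u^c+a\,u^{cs}$ for a small constant $a$ and an arbitrary unit $u^{cs}\in E^{cs}_{i_0}$. Item~(b) survives because the correction is small. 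Item~(a) uses only the \emph{given} $N_0$-domination of $E^{ss}\oplus E^{cs}$: it makes $\|A_{i_0}^{m_0}u^{cs}\|$ beat $\|A_{i_0}^{m_0}u^{ss}\|$ by a factor exponential in $m_0/N_0$, and since $\|A_{i_0}^{m_0}\bar u^c\|$ is comparable to $\|A_{i_0}^{m_0}u^{ss}\|$, the $E^{cs}$-component dominates $A_{i_0}^{m_0}u^c$ and puts its direction $\eta$-close to $E^{cs}_{i_0+m_0}$. No domination between $E^{cs}$ and $E^{cu}$ is ever invoked.
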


\begin{proof}
Let $a=C^{-2N_0}/10$ and let $m_0$ be an integer larger than
$N_0|\log(\eta a)|$.
Since $E^{ss}\oplus E^{cs}$ is $N_0$-dominated,
for any $i\in \ZZ$ and any unit vectors
$u^{ss}\in E^{ss}_i$ and $u^{cs}\in E^{cs}_i$ we have
\begin{equation}\label{e.eq1}
  \|A^{m_0}_{i}(u^{cs})\|> \frac{3}{2\eta a} \|A^{m_0}_{i}(u^{ss})\|.
\end{equation}
Pick an integer $\widetilde N>\max(N_0,2m_0^2(\log C+2)/\log\tfrac32)$.
The lack of $\widetilde N$-dominated splitting of $E^{ss}\oplus (E^{cs}\oplus E^{cu})$ yields $j_0\in \ZZ$, $n\geq \widetilde N$
and some unit vectors
$u^{ss}_0\in E^{ss}_{j_0}$ and $u^c_0\in E^{cs}_{j_0}\oplus E^{cu}_{j_0}$ such that
 $$
   \|A^{n}_{j_0}(u^{ss}_0)\|\geq \tfrac12 \|A^{n}_{j_0}(u^{c}_0)\|.
 $$
Thus the positive numbers
 $$
    a_i:=\frac{\|A^{i-j_0+1}_{j_0}(u^{ss}_0)\|}{\|A^{i-j_0}_{j_0}(u^{ss}_0)\|}
    \times
        \frac{\|A^{i-j_0}_{j_0}(u^{c}_0)\|}{\|A^{i-j_0+1}_{j_0}(u^{c}_0)\|}
        \leq C^2
        \quad (i=j_0,\dots,j_0+n-1)
 $$
satisfy $a_{j_0}\dots a_{j_0+n-1}>1/2$. From $n\geq \tilde N$ and the choice of $\widetilde N$,
one deduces that there is an integer $i_0$ with $j_0\leq i_0\leq j_0+n-m_0-1$ such that
$a_{i_0}\cdots a_{i_0+N_0-1}> 2/3$ and $a_{i_0}\cdots a_{i_0+m_0-1}> 2/3$. Thus,  the unit vectors
 $$
   u^{ss}=\frac{A^{i_0-j_0}_{j_0}(u^{ss}_0)}{\|A^{i_0-j_0}_{j_0}(u^{ss}_0)\|}\in E^{ss}_{i_0},
\quad
\bar u^{c}=\frac{A^{i_0-j_0}_{j_0}(u^{c}_0)}{\|A^{i_0-j_0}_{j_0}(u^{c}_0)\|}\in E^{cs}_{i_0}\oplus E^{cu}_{i_0},
 $$
satisfy
\begin{equation}\label{e.eq2}
\|A^{N_0}_{i_0}(u^{ss})\|> \tfrac23 \|A^{N_0}_{i_0}(\bar u^{c})\|,
\end{equation}
\begin{equation}\label{e.eq3}
\|A^{m_0}_{i_0}(u^{ss})\|> \tfrac23 \|A^{m_0}_{i_0}(\bar u^{c})\|.
\end{equation}
Let $u^{cs}\in E^{cs}_{i_0}$ be any unit vector. Define
 $$
   u^c=(\bar u^c+a.u^{cs})/\|\bar u^c+a.u^{cs}\|.
 $$
Since the cocycle $(A_i)$ and its inverse are bounded by $C$, eq.~\eqref{e.eq2}  yields
 $$\begin{aligned}
 \|A^{N_0}_{i_0}(u^{ss})\| &> \tfrac23(1-a) \|A^{N_0}_{i_0}(u^{c})\|
    - \tfrac23 a\|A^{N_0}_{i_0}(u^{cs})\|\\
   & > \tfrac23(1-a) \|A^{N_0}_{i_0}(u^{c})\|
    - \tfrac23 aC^{2N_0}\|A^{N_0}_{i_0}(u^{ss})\|\\
 \end{aligned}$$
which gives item (b).
Note that $A^{m_0}_{i_0}(u^{c})$ decomposes as
 $$
  \|\bar u^c+a.u^{cs}\|^{-1}.\left(A^{m_0}_{i_0}(\bar u^{c})+a.A^{m_0}_{i_0}(u^{ss})\right).
   $$
From~\eqref{e.eq3} and~\eqref{e.eq1}, one gets
$$\|A^{m_0}_{i_0}(\bar u^{c})\|< \tfrac32 \|A^{m_0}_{i_0}(u^{ss})\|
< \eta a \|A^{m_0}_{i_0}(u^{cs})\|.$$
This implies item (a).
\end{proof}

\begin{lemma}\label{l.domination-symplectic}
For any integer $\widetilde N\geq1$, there exists an integer $N'\geq1$ such that
if the splitting $E^{ss}\oplus (E^{cs}\oplus E^{cu})$
is $\widetilde N$-dominated, then the cocycle $(A_i)$ has an $N'$-dominated splitting.
\end{lemma}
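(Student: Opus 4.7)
The plan is to use symplectic duality to convert the hypothesized $\widetilde N$-domination $E^{ss}\prec E^{cs}\oplus E^{cu}$ into a full dominated splitting $E^{ss}\oplus(E^{cs}\oplus E^{cu}\oplus E^{uu})$ of the whole cocycle, with constant $N'$ depending only on $\widetilde N$ (and on the ambient data $d_0$, $C$, $N_0$).

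First, I would analyze the symplectic geometry of the splitting. Since $E^s,E^u$ are Lagrangian, each of $E^{ss},E^{cs},E^{cu},E^{uu}$ is isotropic; a direct computation of $\omega$-complements yields $(E^{ss})^\omega=E^s\oplus E^{cu}$ and $(E^{uu})^\omega=E^{cs}\oplus E^u$, forcing $\dim E^{ss}=\dim E^{uu}=j$ and showing that $\omega$ pairs $E^{ss}$ non-degenerately with $E^{uu}$. Moreover $F:=E^{cs}\oplus E^{cu}$ is a symplectic subspace, being the $\omega$-complement of the symplectic subspace $E^{ss}\oplus E^{uu}$. Using the uniform bound on the cocycle, the $N_0$-domination of $E^s$, and its symplectic dual (an $N_0'$-dominated splitting $E^{cu}\oplus E^{uu}$ of $E^u$), I would derive uniform constants $K_1,K_2>0$ such that for all $i\in\ZZ$ and $n\geq 1$,
\begin{equation*}
  \|A_i^n|_F\|\cdot m(A_i^n|_F)\leq K_1,\qquad
  m(A_i^n|_{E^{uu}})\cdot\|A_i^n|_{E^{ss}}\|\geq K_2.
\end{equation*}
The first inequality reflects that $A_i^n|_F$ is symplectic on $(F,\omega|_F)$, whose singular values pair as reciprocals, up to the bounded condition number coming from the ambient Euclidean structure. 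The second comes from invariance of $\omega$: for any unit $u\in E^{uu}_i$ the non-degenerate pairing produces $w\in E^{ss}_i$ with $\omega(u,w)=1$ and $\|w\|\leq K_2^{-1}$, giving $1=\omega(A_i^n u,A_i^n w)\leq \|A_i^n u\|\cdot\|A_i^n w\|$.

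Next, I would iterate the hypothesis to upgrade the $E^{ss}\prec F$ domination: $\|A_i^n|_{E^{ss}}\|\leq 2^{-\ell}m(A_i^n|_F)$ whenever $n\geq\ell\widetilde N$. Combining with the two key inequalities,
\begin{equation*}
  \|A_i^n|_F\|\;\leq\;\frac{K_1}{m(A_i^n|_F)}\;\leq\;\frac{K_1\cdot 2^{-\ell}}{\|A_i^n|_{E^{ss}}\|}\;\leq\;\frac{K_1\cdot 2^{-\ell}}{K_2}\,m(A_i^n|_{E^{uu}}).
\end{equation*}
Choosing $\ell=\lceil 1+\log_2(K_1/K_2)\rceil$ yields a dominated splitting $F\prec E^{uu}$ with constant $\ell\widetilde N$. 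Together with the given $E^{ss}\prec^{\widetilde N}F$, this assembles into a dominated filtration $E^{ss}\prec F\prec E^{uu}$ of $\RR^{d_0}$; by standard regrouping (transitivity of domination and the uniform angles implied by each individual domination), $E^{ss}\oplus(E^{cs}\oplus E^{cu}\oplus E^{uu})$ is $N'$-dominated, with $N'$ depending only on $\widetilde N$, $K_1$, $K_2$.

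The main obstacle is establishing the uniform constants $K_1$ and $K_2$. This requires a careful geometric analysis of how the four invariant subbundles sit in $\RR^{d_0}$: the $N_0$-domination of $E^s$ yields a uniform angle between $E^{ss}$ and $E^{cs}$, its symplectic dual yields the analogue between $E^{cu}$ and $E^{uu}$, and combined with the cocycle bound this controls both the condition number of $F$ as a symplectic subspace and the non-degeneracy of the $\omega$-pairing $E^{ss}\times E^{uu}\to\RR$.
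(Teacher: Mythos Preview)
Your approach shares the paper's core idea—exploit (i) the symplecticity of the action on $F=E^{cs}\oplus E^{cu}$ and (ii) the $\omega$-duality between $E^{ss}$ and $E^{uu}$—but there is a genuine gap in your justification of the constant $K_2$.

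The uniform inequality $m(A_i^n|_{E^{uu}})\cdot\|A_i^n|_{E^{ss}}\|\geq K_2$ requires that for every unit $u\in E^{uu}_i$ one can find $w\in E^{ss}_i$ with $\omega(u,w)=1$ and $\|w\|$ uniformly bounded; this amounts to a uniform lower bound on the Euclidean angle between $E^{ss}_i$ and $E^{uu}_i$. None of the hypotheses supply this: the $\widetilde N$-domination controls only $\angle(E^{ss},F)$, and your appeal to ``the symplectic dual of the $N_0$-domination'' is circular—deducing a \emph{uniform} dual domination $E^{cu}\prec E^{uu}$ from $E^{ss}\prec E^{cs}$ via $\omega$-invariance already requires the very bound on $\omega|_{E^{ss}\times E^{uu}}$ you are trying to establish. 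Concretely: after putting $E^{ss}=\RR^j\times\{0\}$ and $E^c$ in standard position, $E^{uu}$ is a Lagrangian graph $\{(Lw,0,w)\}$ over the last $j$ coordinates; a unit vector there pairs with $E^{ss}$ with strength $\|w\|/\sqrt{\|Lw\|^2+\|w\|^2}$, which is small when $\|L\|$ is large, and nothing so far bounds $\|L\|$. (Note also that the lemma as stated does not assume the $N_0$-domination of $E^s$; the paper's proof uses only the $\widetilde N$-hypothesis.)

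The paper sidesteps this obstruction. Using only the $\widetilde N$-domination (hence a uniform angle between $E^{ss}$ and $E^c$), it conjugates by a bounded symplectic cocycle so that $E^{ss}$ and $E^c$ become standard coordinate subspaces and $(A_i)$ becomes block upper-triangular with diagonal blocks $B_i^T$, $C_i\in Sp(d_0-2j,\RR)$, $B_i^{-1}$. In these coordinates your two key identities become \emph{exact}: $\|C_{i+n}\cdots C_{i+1}\|=\|(C_{i+n}\cdots C_{i+1})^{-1}\|$ (symplecticity of $C_i$) and $m(B_{i+n}^{-1}\cdots B_{i+1}^{-1})=\|B_{i+n}^T\cdots B_{i+1}^T\|^{-1}$ (transpose). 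The first two diagonal blocks are then dominated by the third, and the cone-field criterion \emph{produces} a uniformly transverse $j$-dimensional invariant complement to $\RR^{d_0-j}\times\{0\}^j$. The uniform angle of $E^{uu}$ is thus obtained as a \emph{conclusion}, never assumed as an input. To repair your argument you would need either to import this block-triangular reduction together with the cone-field step, or to find an independent a priori bound on $\angle(E^{ss},E^{uu})$; the latter does not follow from the stated hypotheses.
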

\begin{proof}
By $\widetilde N$-domination,
the angle between $E^{ss}$ and $E^c=E^{cs}\oplus E^{cu}$
is lower bounded and a variant of Corollary \ref{c.change-basis2} yields a conjugacy by a bounded, symplectic cocycle which sends $E^{ss}$ and $E^{c}=E^{cs}\oplus E^{cu}$ to the constant bundles
$\RR^{j}\times\{0\}^{d_0-j}$ and $\{0\}^j\times \RR^{d_0-2j}\times\{0\}^j$.
The cocycle $(A_i)$ is bounded by $C'$ and has the form:
 \begin{equation}\label{eq.reduc3}
   A_i=\begin{pmatrix} B_i^T & 0 & D_i\\
          0 & C_i & 0\\
          0 & 0 & B_i^{-1} \end{pmatrix},
 \end{equation}
where $B_i,D_i$ are $j\times j$ matrices and $C_i\in Sp(d_0-2j,\RR)$.

In the following we denote by $m(A)=\|A^{-1}\|^{-1}$ the co-norm of a matrix.
Since the change of coordinates is bounded, the $\widetilde N$-dominated splitting
gives uniform numbers $a>0$ and $b\in (0,1)$ such that
for any $n\geq 0$ and any $i\in\ZZ$, 
 $$
  \|B_{i+n}^T\dots B_{i+1}^T\|\leq
a b^n m(C_{i+n}\dots C_{i+1}).
 $$
Now, there is a constant $c$ (depending only on $d_0$) such that
 $$
   \|(B_{i+n}^{-1}\dots B_{i+1}^{-1})^{-1}\| \leq c
       \|((B_{i+n}^{-1}\dots B_{i+1}^{-1})^{-1})^T\|
        = c \|B_{i+n}^T\dots B_{i+1}^T\|.
 $$
Hence,
 $$
    \|(B_{i+n}^{-1}\dots B_{i+1}^{-1})^{-1}\| \leq ac b^n \|(C_{i+n}\dots C_{i+1})^{-1}\|^{-1}.
 $$
As each $C_i$ is symplectic, $JC_i^{-1}=C_i^TJ$, so we have  $\|(C_{i+n}\dots C_{i+1})^{-1}\|= \|C_{i+n}\dots C_{i+1}\|$ and
 $$
  \|C_{i+n}\dots C_{i+1}\| \leq acb^n m(B_{i+n}^{-1}\dots B_{i+1}^{-1}). 
 $$
This also gives
 $$
    \|B_{i+n}^T\dots B_{i+1}^T\|\leq
ca^2 b^{2n} m(B_{i+n}^{-1}\dots B_{i+1}^{-1}).
 $$
 Since the matrices $D_i$
are uniformly bounded, the cone-field criterion is satisfied
and gives a uniform dominated splitting between the bundle
$\RR^{d_0-j}\times \{0\}^{j}$ and a transverse bundle with dimension $j$.

The dominated splitting for the initial cocycle $(A_i)$ is obtained
by pulling back by the bounded conjugacy.
This shows that the splitting
$(E^{ss}\oplus E^c)\oplus E^{uu}$ is $N'$-dominated for some uniform
integer $N'$.
\end{proof}

\begin{proof}[Proof of the Proposition~\ref{p.BochiBonatti-reduced}]
Given $N_0,\varepsilon'$, the lemmas above give $\eta$, $\widetilde N$, $N'$.
Consider a cocycle $(A_i)$ as in the statement of the proposition.
One may assume that the splitting $E^s=E^{ss}\oplus E^{cs}$ is $N_0$-dominated
(otherwise there is nothing to show) and that 
 $E^{ss}\oplus (E^{cs}\oplus E^{cu})$ is not
$\widetilde N$-dominated (otherwise Lemma~\ref{l.domination-symplectic} provides a $N'$-dominated splitting).
Hence, Lemma~\ref{l.break-domination} gives integers $i_0\in \ZZ$, $m_0\geq N_0$ and some unit vectors
$u^c\in E^{cs}_{i_0}\oplus E^{cu}_{i_0}$, $u^{ss}\in E^{ss}_{i_0}$.

One chooses a subspace $\widehat E^{cs}_{i_0+m_0}$
in $E^{cs}_{i_0+m_0}\oplus E^{cu}_{i_0+m_0}$ that is $\eta$-close to $E^{cs}_{i_0+m_0}$ (with equal dimension)
and contains $A^{m_0}_{i_0}(u^c)$ .
Since $E^{cs}\oplus E^{cu}$ is (not necessarily strongly) dominated,
there exists an index $k<i_0$ such that
$$\widehat E^{cs}_{k+1}:=(A^{i_0+m_0-k-1}_{k+1})^{-1}(\widehat E^{cs}_{i_0+m_0})$$
is $\eta$-close to $E^{cs}_{k+1}$.
Applying Lemma~\ref{c.rectify} twice, one builds two
$\varepsilon'$-paths of perturbations $(A_{k}(t))$ and $(A_{i_0+m_0}(t))$ of $A_k$ and $A_{i_0+m_0}$ respectively such that
\begin{itemize}
\item[--] the restrictions to $E^{ss}$ of  $A_{k}(t)$ and  $A_{i_0+m_0}(t)$ are constant in $t$, and
\item[--] $A_{k}(E^{cs}_{k})=\widehat E^{cs}_{k+1}$ and
$A_{i_0+m_0}(\widehat E^{cs}_{i_0+m_0})=E^{cs}_{i_0+m_0+1}$.
\end{itemize}

One then chooses some large even multiple $\ell$ of the period of the initial cocycle and sets $A'_{i+s\ell}(t)=A_{i}(t)$
for each $s\in \ZZ$ and $i\in \{-\ell/2,\dots,\ell/2-1\}$.
Since $\ell$ is large, the Oseledets splitting $E^{ss}\oplus E^{cs}\oplus E^{cu}\oplus E^{uu}$ for the $\ell$-periodic cocycle $(A_i(t))$
can be followed continuously with $t$.
The items \ref{i.1}, \ref{i.3} and \ref{i.4} hold.

Item \ref{i.5} follows from the  Lemma~\ref{l.break-domination} (b), since $u^{ss}\in E^{ss}_{i_0}$, $u^c\in E^{cs}_{i_0}$ for the cocycle $(A_i(1))$ and since $A_{i_0+n}(1)=A_{i_0+n}$ for $n=0,\dots,N_0-1$.

Note also that the determinant along $E^{cs}$ of
$$A_{\ell/2-1}(t)\circ\dots\circ A_{\ell/2+1}\circ A_{\ell/2}(t)$$ changes only by a factor bounded independently from $\ell$ (the maps and the spaces of the bundle$E^{cs}$ are unchanged for indices not in $\{k,\dots,i_0+m_0\}$).
Since the spaces of the Oseledets splitting are $\omega$-orthogonal,
one can compose by symplectic maps
which act as the identity along $E^{ss}_j$ and as homotheties along $E^{cs}_j$
for some indices
$j=\ell/4,\dots,\ell/2-1$.
For these indices $E^{cs}$ coincides with the initial bundle $E^{cs}$,
hence is uniformly far from $E^{ss}$. Lemma~\ref{l.change-basis} controls the size of this perturbation.
One thus obtains $\varepsilon'$-paths of perturbations
$(A_{\ell/4}(t))$,\dots, $(A_{\ell/2-1}(t))$
such that the determinant of
$A_{\ell/2-1}(t)\circ\dots\circ A_{\ell/2}(t)$ along the continuation of the bundle $E^{cs}$
is constant in $t$. This implies the item~\ref{i.2} for the stable bundle.
The determinant along the stable and unstable bundles being inverse of each other, item~\ref{i.2} also holds for the unstable bundle.
\end{proof}

\section{Homoclinic tangencies}

This section extends to the conservative setting the following theorem of Gourmelon \cite{Gourmelon2010,Gourmelon2014} on the creation of homoclinic tangencies from a lack of dominated splitting.  We will follow the main steps of~\cite{Gourmelon2014} with the exception of the induction on the dimension. This induction is quite technical in the dissipative setting and difficult to adapt in the conservative setting. Avoiding it results in a simplification of Gourmelon's proof.

\begin{theorem}[Homoclinic tangency]\label{t.gourmelon}
For any {$d_0\geq1$}, $C>1$, $\varepsilon>0$, there exist $N,T\geq 1$ with the following property.
Consider
\begin{itemize}
\item[--] a diffeomorphism $f\in \Diff^1(M)$ {of a $d_0$-dimensional manifold $M$}
such that the norms of $Df$ and $Df^{-1}$ are bounded by $C$,
\item[--] a periodic saddle $\cO$ with period larger than $T$ such that {the splitting defined by the stable and unstable bundles is not an $N$-dominated splitting,} and
\item[--] a neighborhood $V$ of $\cO$.
\end{itemize}
Then there exists an $(\varepsilon, V, \cO)$-perturbation $g$ of $f$ and $p\in \cO$ such that
\begin{itemize}
\item[--] $W^s(p)$, $W^u(p)$ have a tangency $z$ whose orbit is contained in $V$, and
\item[--] the differential of $f$ and $g$ coincide along $\cO$.
\end{itemize}

Moreover if $\cO$ is homoclinically related to a periodic point $q$ for $f$, then the perturbation $g$
can be chosen to still have this property. If $f$ preserves a volume or a symplectic form, one can choose $g$ to preserve it also.
\end{theorem}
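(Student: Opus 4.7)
The plan is to adapt Gourmelon's strategy \cite{Gourmelon2010,Gourmelon2014}, using the perturbation toolbox developed earlier in the paper, and simplify the argument by avoiding his induction on dimension. The overall scheme is: expose a small angle between $E^s$ and $E^u$ along $\cO$ via a cocycle perturbation, realize it through the linearizing, homoclinic-relation-preserving Franks' lemma (Theorem \ref{l.gourmelon}), produce a near-tangent homoclinic orbit inside $V$ using Proposition \ref{p.elementary}, and finally restore $Dg|_\cO=Df|_\cO$ by embedding the whole construction in a one-parameter family.

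Choose $\delta>0$ small. First I would apply Corollary \ref{c.real} followed by Theorem \ref{t.BochiBonatti} to reduce to a cocycle along $\cO$ with real, simple spectrum and with equal moduli within the stable (resp. unstable) eigenvalues (on a multiple of the period, which dictates the value of $T$). Then Theorem \ref{t.angle} yields an $\varepsilon/3$-path $(A_i(t))_{t\in[0,1]}$ of perturbations that remains hyperbolic, with the angle between $E^s_j(1)$ and $E^u_j(1)$ smaller than $\delta$ at some index $j$ (this determines $N$). Theorem \ref{l.gourmelon} realizes the endpoint cocycle as an $(\varepsilon/2,V,\cO)$-perturbation $g_0$ of $f$ which is linear in charts $\chi_i$ near the points of $\cO$, satisfies $Dg_0(p_i)=A_i(1)$, preserves the volume/symplectic form if present, and preserves the homoclinic relation with $q$ when $q$ is given. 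In the chart at $p_0:=f^j(p)$, the local stable and unstable manifolds coincide with the linear subspaces $E^s_j,E^u_j$ meeting at angle less than $\delta$.

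Inside this chart, choose unit vectors $u\in E^u_j$ and $s\in E^s_j$ with angle less than $\delta$, and a small $r>0$; the points $y_u:=\chi^{-1}(ru)\in W^u_{loc}(\cO)$ and $y_s:=\chi^{-1}(rs)\in W^s_{loc}(\cO)$ sit at distance at most $Cr\delta$. Applying the conservative form of Proposition \ref{p.elementary} on a small ball disjoint from $\cO$ near $y_u$, perturb $g_0$ by a $C^1$-amount $\lesssim \delta/\eta<\varepsilon/6$ to redirect an iterate of $y_u$ onto the orbit of $y_s$. This produces a homoclinic point $z$ whose entire orbit remains in the union of the linearizing neighborhoods and hence in $V$, and whose local $W^u,W^s$ meet at angle at most $\delta$. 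A second perturbation of size $\sim\delta$, supported on a small ball along the orbit of $z$ disjoint from $\cO$ and the first perturbation and realized conservatively by a symplectic/volume-preserving shear, rotates $T_zW^u$ into $T_zW^s$ to promote the near-transverse intersection to a genuine tangency.

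The main obstacle is the final requirement that $Dg|_\cO=Df|_\cO$, since the construction so far has $Dg_0|_\cO=A_i(1)\neq Df|_\cO$. To resolve this, I would carry out the entire construction inside the one-parameter family $(g_t)_{t\in[0,1]}$ obtained by realizing $(A_i(t))$ through Theorem \ref{l.gourmelon} (with $g_0=f$ in charts and $g_1$ as above), define the perturbations of the preceding paragraph continuously in $t$, and track the continuation $z(t)$ of the homoclinic point together with the angle $\theta(t):=\angle(T_{z(t)}W^u,T_{z(t)}W^s)$. Tuning the strength of the tangency-producing shear as a function of $t$ and applying an intermediate-value argument to $\theta(t)$ along the path, one selects the parameter value at which the homoclinic intersection is tangential and $A_i(t)=Df(p_i)$, namely $t=0$, yielding the desired $g$. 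The delicate point, and what makes this the main obstacle, is the compatibility between the shrinking of the angle needed to make the intersection tangent and the reopening of the angle as the cocycle path is reversed; the simplification over Gourmelon's argument is that the linearizing chart exposes the full subspaces $E^s_j$ and $E^u_j$ simultaneously, so a single rank-one rotation handles all dimensions at once without an induction.
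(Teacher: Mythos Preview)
Your overall architecture---linearize, create a small stable/unstable angle, connect nearby stable and unstable points via Proposition~\ref{p.elementary}, rotate into a tangency---matches the paper. But your handling of the requirement $Dg|_\cO=Df|_\cO$ has a genuine gap. The intermediate-value scheme cannot work: the only parameter at which $A_i(t)=Df(f^i(p))$ is $t=0$, and at $t=0$ the angle $\angle(E^s,E^u)$ is the \emph{original} one, not the small $\delta$ produced by Theorem~\ref{t.angle}. Since the $C^1$-cost of the Proposition~\ref{p.elementary} step is proportional to that angle, at $t=0$ the homoclinic point $z(0)$ cannot be produced by an $\varepsilon$-perturbation; tuning the shear is irrelevant because the shear only adjusts the tangency \emph{after} a homoclinic point has already been created cheaply. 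The paper instead keeps the modified cocycle throughout the construction of the tangency (whose support is a ball $B$ at positive distance from $\cO$) and only at the very end applies Theorem~\ref{t.linearize} once more, in a much smaller neighborhood of $\cO$, to restore the original differential along $\cO$ while keeping the homoclinic connection. No parameter or continuation argument is involved.

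There is a second, more technical omission. Your perturbation ball must avoid not only $\cO$ but also the orbits of the fixed transverse intersection points $z^s,z^u$ linking $\cO$ to $q$, and these accumulate on $\cO$ inside the linearizing chart. The paper's Step~2 is devoted to exactly this: rather than arbitrary unit vectors, it selects nested eigenbundles $E'\subset E\subset E^s$ and $F'\subset F\subset E^u$ (via a pigeonhole on the scales $\chi^{i+j}$) so that $B$ is thin in the $E\cap(E')^\perp$ direction; one can then choose the scale $\rho$ so that $B$ avoids the given orbits of $z^s,z^u$ (and so that the new homoclinic orbit itself stays out of $B$). Your choice of $u,s$ does not give this control. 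Incidentally, Theorem~\ref{t.BochiBonatti} is not needed here---the paper uses only Proposition~\ref{p.real}, Corollary~\ref{c.real}, and Theorem~\ref{t.angle} in the reduction.
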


\begin{remark}\label{r.gourmelon}
As a consequence, one can also obtain a transverse intersection $z'$ between $W^s(\cO)$ and $W^u(\cO)$,
whose orbit is contained in $V$. This implies that $O$ belongs to a horseshoe of $g$ which is contained in $V$.
\end{remark}

\begin{proof}[Proof of Theorem~\ref{t.gourmelon}]
{As noted, the above theorem was proved in the dissipative setting by Gourmelon in \cite[Theorem 3.1]{Gourmelon2010} and~\cite[Theorem 8]{Gourmelon2014}. We consider the symplectic setting.}
One fixes $\eta>0$ given by Proposition~\ref{p.elementary} and $\chi>0$ much smaller than $\eta$.
\medbreak
\noindent{\it Step 1. Reduction.}
After a first perturbation,
using the linear statements (Proposition~\ref{p.real}, Theorem~\ref{t.angle} and Corollary~\ref{c.real})
together with the Franks' lemmas (Theorems~\ref{t.linearize} and~\ref{l.gourmelon})
one can assume the following properties:
\begin{itemize}
\item[--] a neighborhood of $\cO$ admits a chart $\psi$ such that $\psi\circ f \circ \psi^{-1}$ is linear near each point of $\cO$,
\item[--] $\cO$ has $d_0$ distinct real eigenvalues, with moduli in $(0,1/2)\cup (2,+\infty)$,
\item[--] the angle between stable and unstable spaces at some iterate $p\in \cO$ is smaller than $\chi^{d_0}$.
\end{itemize}
When $\cO$ is homoclinically related to a periodic point $q$, one fixes two transverse intersections
$z^s\in W^s(\cO)\cap W^u(q)$ and $z^u\in W^u(\cO)\cap W^s(q)$ and {choose the above perturbation to preserve these}.

\medbreak
\noindent{\it Step 2. Choice of iterates.}
Let $TM_{|\cO}=E^s_1\oplus \dots \oplus E^s_k\oplus
E^u_{1}\oplus\dots\oplus E^u_{d_0-k}$ be the invariant decomposition into one-dimensional bundles.
Since the angle between $E^s(p)$, $E^u(p)$ is smaller than $\chi^{d_0}$,
one can find $1\leq i\leq k$ and $1\leq j\leq d_0-k$ such that
\begin{itemize}
\item[a --] the angle $\theta$ between $E:=E^s_1\oplus \dots\oplus  E^s_i$ and
$F:=E^u_1\oplus \dots \oplus E^u_j$ is in $(0,\chi^{i+j})$;
\item[b --] the angles between $E$ and $F':=E^u_1\oplus \dots E^u_{j-1}$
and between $E':=E^s_1\oplus \dots \oplus E^s_{i-1}$ and $F$
are larger than ${\theta}/{\chi}$ (by convention the angle of the zero subspace with any other subspace is infinite).
\end{itemize}
{Indeed, setting $\theta_{ij}:=\angle(E^s_1\oplus\dots\oplus E^s_i,E^u_1\oplus\dots\oplus E^u_j)$,
one builds inductively a sequence $\mathcal S$ of pairs $(i,j)$ such that the angle $\theta_{i,j}$
satisfies (a). The initial pair is $(k,d_0-k)$.
If $(i,j)\in \mathcal{S}$ and if $\theta_{i,j}$ does not satisfies the condition (b),
then the new pair in the sequence is either $(i-1,j)$ or $(i,j-1)$: one of these two pairs has to satisfy (a).
The last pair obtained during this construction satisfies (b) as required.}

For $\rho>0$ small, one can choose
$u\in E$ and $v\in F$ with norm $\rho$ such that $\|u-v\|<\theta.\rho$.
Since the angle between $v$ and $E'$ is larger than $\theta/\chi$
(and since $\chi$ is small), the orthogonal projection of $u$ on $E\cap (E')^\perp$
has norm larger than $\theta\rho/(2\chi)$.

Since $\eta\gg \chi$, the quantity $\theta\rho/\eta$
is much smaller than $\theta\rho/(2\chi)$.
Consequently, the orthogonal projection on $E\cap (E')^\perp$
of the ball $B$ centered at $u$ and with radius $\|u-v\|/\eta$ has diameter {much} smaller
than the projection of $u$.
Since $E'$ and $E$ are preserved by $Df^{\pi(\cO)}$, since $E/E'$ is one-dimensional,
and since the eigenvalue of $Df^{\pi(\cO)}$ on the quotient $E/E'$
belongs to $(-1/2,1/2)$, one deduces that the forward orbit of $u$ does not intersect
the ball $B$.
Similarly the backward iterate of $v$ does not intersect the ball $B$. 
Using the linearity of $f$ in the chart near $\cO$, this also holds for the $f$-orbits of $u$ and $v$ as points in $M$. 
\medskip

\noindent{\it Step 3. The homoclinic tangency.}
Proposition~\ref{p.elementary} gives an $\varepsilon$-perturbation $g$
of $f$ such that $f^{-1}\circ g$ is supported in $B$ and such that
$g(v)=f(u)$. One deduces that the orbit of $v$ for $g$ is homoclinic
to $\cO$ (and contained in a small neighborhood of $\cO$).
Since the angle between $E$ and $F$ is small,
with Theorem~\ref{t.linearize}, one can ensure that $Dg(v)(F)$ is tangent to $E$
along the line directed by $u$. One has obtained a homoclinic tangency.
\medskip

Assume now that $\cO$ is homoclinically related to some point $q$
through the orbits of $z^s,z^u$.
By construction, the orthogonal projection of $B\cap E$ on $E\cap (E')^\perp$
has a small diameter in comparison to the distance to the origin.
Since the eigenvalue of $Df^{\pi(\cO)}$ on the quotient $E/E'$
belongs to $(-1/2,1/2)$, one can choose $B$ (i.e. the norm $\rho$)
so that $B$ is disjoint from the orbit of $z^s$. The same can be done for $z^u$.
This proves that the homoclinic connection with $q$ is preserved.

Finally, one can apply Theorem~\ref{t.linearize} so that the {differential} of $f$ and of the
perturbed system coincide along the orbit of $\cO$ while keeping the homoclinic connection.
\end{proof}

\section{Proof of Theorem \ref{t.horseshoes}}

The proof can be summarized as follows: Let $\Lambda$ be an infinite, transitive compact set for the diffeomorphism $f$. It will be approximated by a periodic orbit created by Pugh's closing lemma. This periodic orbit cannot have strong domination. Theorem \ref{t.gourmelon} (more precisely, Remark \ref{r.gourmelon}) will yield a horseshoe close to the periodic orbit, provided this orbit is a saddle. 

To reduce to this case, we will use a different argument in the conservative and dissipative cases. In the conservative setting, we will perturb the differential and linearize to build saddles (with higher periods). In the dissipative setting (where, generically, there are weak sinks or sources accumulating on $\Lambda$), we will rely on the following result, see~\cite{pliss}:

\begin{proposition}[Pliss]\label{p.pliss}
For any $d_0\geq 2$, $C\geq 1$ and $\varepsilon>0$, there exists $N,T\geq 1$ such that,
if $g$ is a diffeomorphism with $Dg, Dg^{-1}$ bounded by $C$ and if $\cO$ is an
attracting periodic orbit with period $\ell$ larger than $T$, then
\begin{itemize}
\item[--] either $\prod_{x\in \cO} \|Dg^N(x)\|\leq 2^{-\ell}$,
\item[--] or there exists a diffeomorphism $C^1$-close to $g$
which preserves $\cO$ and such that $\cO$ is a hyperbolic saddle
(it has both stable and unstable eigendirections.).
\end{itemize}
\end{proposition}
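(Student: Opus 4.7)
The plan is to show that if $\prod_{x\in\cO}\|Dg^N(x)\|>2^{-\ell}$, then a Franks-type perturbation (Theorem~\ref{t.linearize}) of size $\varepsilon$ produces a hyperbolic saddle on $\cO$. Taking logarithms, the hypothesis says that the averaged $N$-step log-norm of $Dg$ over the orbit is bounded below by $-\log 2$; for $N$ chosen large in terms of $\varepsilon$ this corresponds to a top Lyapunov exponent along $\cO$ close to zero, which a Franks perturbation of size $\varepsilon$ will be able to turn positive.

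First, I would apply Pliss's lemma to the sequence $a_i:=\log\|Dg^N(g^i(p))\|$, each bounded above by $N\log C$, with $\sum_{i=0}^{\ell-1}a_i>-\ell\log 2$. This yields a positive-density subset $I\subset\{0,\dots,\ell-1\}$ of ``Pliss times'' $j$ such that all forward partial sums $\sum_{i=j}^{j+k-1}a_i\geq -3k\log 2$ uniformly in $k\leq \ell-j$. By inductive selection of top singular directions of the $N$-step cocycle, at a Pliss time $j_0$ one extracts a unit vector $v\in T_{g^{j_0}(p)}M$ whose iterates satisfy a non-contraction estimate $\|Dg^{kN}(g^{j_0}(p))v\|\geq c\cdot 2^{-3k}$ for some absolute $c>0$ and all $k$ with $j_0+kN<\ell$.

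Second, I would invoke Theorem~\ref{t.linearize} to replace, at a positive-density subset of indices of $\cO$, the map $Dg$ by an $\varepsilon$-close linear map acting as a dilation of factor $1+\varepsilon'$ along the parallel-transported direction of $v$ and as the identity on an orthogonal complement. The accumulated effect over one full period multiplies $v$ by roughly $(1+\varepsilon')^{c'\ell}$, while the Pliss estimate controls the residual contraction by $2^{-3\ell/N}$. For $N$ chosen so that $c'\log(1+\varepsilon')>3(\log 2)/N$, the product exceeds $1$, producing an eigenvalue of modulus strictly greater than $1$ for the perturbed cocycle. Since $\cO$ was attracting for $g$, the remaining eigenvalues of $Dg^\ell$ start below $1$ and stay below $1$ under a small perturbation localized in a rank-one direction; the perturbed orbit is therefore a hyperbolic saddle that still coincides with $\cO$ pointwise.

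The main technical obstacle is the coherent composition of the Franks dilations across the orbit: a priori, dilations along different directions at different points could interfere and cancel rather than compound. The Pliss selection is precisely the device that supplies a globally coherent direction, namely the parallel-transported images of the single top-singular vector $v$ extracted in the first step, which remain aligned with the top-expansion direction at every iterate, so that the $\varepsilon'$-dilations accumulate multiplicatively. A secondary point is the correct choice of the threshold parameters: $N$ must be large enough in terms of $\varepsilon$ (roughly $N\gtrsim\varepsilon^{-1}$) so that the per-block dilation overcomes the residual contraction rate $(\log 2)/N$, and $T$ large enough that the Pliss set $I$ is abundantly populated along the orbit.
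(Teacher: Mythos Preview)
First, note that the paper does not actually prove this proposition: it is quoted as a classical result of Pliss~\cite{pliss} and used as a black box in the proof of Theorem~\ref{t.horseshoes}. There is therefore no proof in the paper to compare against, and your attempt must stand on its own.

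Your outline uses the right ingredients (Pliss' combinatorial lemma together with a Franks-type perturbation), but there is a genuine gap in the passage from Step~1 to the non-contraction estimate for a \emph{vector} in Step~2. The Pliss lemma applied to $a_i=\log\|Dg^N(g^i(p))\|$ yields, at a Pliss time $j_0$, an inequality of the form
\[
   \prod_{i=0}^{k-1}\|Dg^N(g^{j_0+i}(p))\|\;\geq\;2^{-3k}.
\]
But a product of operator norms bounds the norm of a composition only from \emph{above}, never from below: $\|B_k\cdots B_1\|\leq\prod_j\|B_j\|$. Hence no single vector $v$ with $\|Dg^{kN}(g^{j_0}(p))v\|\geq c\,2^{-3k}$ is delivered by this estimate, and ``inductive selection of top singular directions'' does not fix this: the top singular input direction of one $N$-block has no reason to coincide with the output direction of the previous one. (Your indexing is also inconsistent---the $a_i$ are overlapping $N$-step norms while the conclusion is about a single $kN$-step map---but even with the most charitable block reading the issue remains.)

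There is a second gap in Step~3. Suppose a vector $v$ with the stated non-contraction existed, and dilate by $1+\varepsilon'$ along its forward images $v_i:=Dg^i(v)/\|Dg^i(v)\|$. The perturbed period map $\tilde{\mathcal A}$ then sends $v$ to a multiple of $v_\ell=Dg^\ell(v)/\|Dg^\ell(v)\|$ with norm exceeding $1$; but $v_\ell$ is parallel to $v$ only when $v$ was already an eigenvector of $\mathcal A$. Thus you obtain $\|\tilde{\mathcal A}\|>1$, which says nothing about the spectral radius. The assertion that ``the remaining eigenvalues \dots\ stay below~$1$ under a small perturbation localized in a rank-one direction'' is likewise unjustified.

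The classical argument (Pliss, and Ma\~n\'e~\cite{Mane82}) avoids both problems by reversing the logic: rather than seeking a pre-existing coherent direction, one uses the $\varepsilon$-perturbations themselves to \emph{create} alignment between the top singular directions of successive blocks, and shows that if no such perturbation ever produces an eigenvalue of modulus $\geq1$, then the $N$-step norms must already be uniformly contracting on average. The saddle-producing perturbation is thus built into the dichotomy, not applied after a vector has been found.
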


\medskip

\begin{proof}[Proof of Theorem \ref{t.horseshoes}]
We fix $\varepsilon>0$ small
and $C>0$ which bounds the norms of $Dg$ and $Dg^{-1}$
for any diffeomorphism that is $\varepsilon$-close to $f$ for the $C^1$-topology.
The dimension of $M$ is $d_0\geq 2$.
Theorem~\ref{t.gourmelon} provides integers $N,T\geq1$ given $d_0,C,\varepsilon/2$.

We first perturb $f$ to create a periodic orbit approximating $\Lambda$.

\begin{lemma}
For any $\varepsilon_1>0$, there exists $f_1$ with $d_{C^1}(f,f_1)<\varepsilon_1$
and a periodic orbit $\cO$ of $f_1$ that is $\varepsilon_1$-close to $\Lambda$ for the Hausdorff distance.
\end{lemma}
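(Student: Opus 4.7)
The plan is to apply the $C^1$-closing lemma (in the appropriate category) to a forward orbit segment that is nearly dense in $\Lambda$, so that the resulting periodic orbit automatically inherits Hausdorff-closeness to $\Lambda$.

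First, transitivity of $\Lambda$ together with a standard Baire argument in the compact metric space $\Lambda$ yields a point $x\in\Lambda$ whose forward $f$-orbit is dense in $\Lambda$. One then fixes $n\geq 1$ such that the finite set $S:=\{x,f(x),\dots,f^n(x)\}$ is $(\varepsilon_1/3)$-dense in $\Lambda$ for the Hausdorff distance. By density of the full forward orbit (in the dissipative case) or by Poincar\'e recurrence (in the conservative case), one then obtains an integer $m\geq n$, as large as desired, such that $f^m(x)$ is arbitrarily close to $x$.

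Second, one invokes the $C^1$-closing lemma: Pugh's original statement~\cite{pugh} in the dissipative case, and its volume-preserving and symplectic extensions in the conservative cases. Applied at the nearly closed segment $\{x,f(x),\dots,f^m(x)\}$, this produces an $(\varepsilon_1/2)$-perturbation $f_1$ of $f$ in the same category as $f$, together with a periodic point $p$ of $f_1$ whose orbit $\cO$ uniformly $C^0$-shadows that segment within distance $\varepsilon_1/3$.

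The triangle inequality then yields $d_H(\cO,\Lambda)<\varepsilon_1$: each point of $\Lambda$ lies within $\varepsilon_1/3$ of $S$, and $S$ lies within $\varepsilon_1/3$ of $\cO$ by the shadowing; conversely, each point of $\cO$ lies within $\varepsilon_1/3$ of the orbit segment, which is contained in $\Lambda$. The main obstacle is the invocation of the conservative closing lemma, since the symplectic and volume-preserving versions are considerably more delicate than Pugh's original argument. Once these are taken for granted, the remainder is a routine triangle inequality and requires no additional perturbative input.
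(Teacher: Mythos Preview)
Your strategy matches the paper's, but the step invoking the closing lemma has a real gap. Pugh's $C^1$-closing lemma does \emph{not} yield a periodic orbit that $C^0$-shadows a prescribed segment $\{x,f(x),\dots,f^m(x)\}$. The whole difficulty of Pugh's argument is that this segment may revisit any small neighborhood of $x$ many times before time $m$, and a perturbation closing the orbit at time $m$ would disturb those intermediate returns. Pugh's selection procedure resolves this by passing to a suitable \emph{sub-segment} $\{f^a(x),\dots,f^b(x)\}$ and closing that instead. The resulting periodic orbit may therefore be much shorter than $m$ and need not come near every point of your $\varepsilon_1/3$-dense set $S$. Your triangle-inequality step then fails in the direction ``every point of $\Lambda$ is near $\cO$''.

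The paper avoids this by using the localized form of the closing lemma: there is $N\geq 1$ depending only on $\varepsilon_1$ and bounds on $Df,Df^{-1}$ such that, for any neighborhood $U$ of $x$, one obtains an $\varepsilon_1$-perturbation $f_1$ with $f_1=f$ outside $U\cup\dots\cup f^{N-1}(U)$ and a periodic orbit $\cO$ meeting $U$ with $\cO\subset\cO(x)\cup U\cup\dots\cup f^{N-1}(U)$ (and one cites~\cite{PR} for the conservative case). The containment immediately gives $\cO\subset$ ($\varepsilon_1$-neighborhood of $\Lambda$). For the reverse direction, one uses that $N$ is fixed before $U$ is chosen: outside the perturbation region, $\cO$ consists of genuine pieces of $\cO(x)$, and shrinking $U$ forces these pieces to be long enough to be $\varepsilon_1$-dense in $\Lambda$. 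No shadowing is claimed. To repair your argument, replace the shadowing assertion by this localized statement; the rest then proceeds as in the paper.
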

\begin{proof}
Since $\Lambda$ is transitive, there exists a point $x\in \Lambda$
whose orbit is dense in $\Lambda$. Since $\Lambda$ is infinite, $x$
is not periodic. From Pugh's closing lemma~\cite{pugh}, there exists $N\geq 1$
with the following property:
for any neighborhood $U$ of $x$, there exists a diffeomorphism $f_1$
having a periodic orbit $\cO$ which intersects $U$ such that:
\begin{itemize}
\item[--]  $d_{C^1}(f,f_1)<\varepsilon_1$,
\item[--] $f_1=f$ on $M\setminus (U\cup\dots\cup f^{N-1}(U))$,
\item[--] $\cO\subset \cO(x)\cup U\cup\dots\cup f^{N-1}(U)$.
\end{itemize}
When $f$ is conservative, $f_1$ can still be chosen conservative
(the closing lemma is still valid for conservative systems~\cite{PR}).

Since the orbit of $x$ is dense in $\Lambda$, if the diameter of $U$ is small enough, 
the orbit $\cO$ (which intersects $U$) intersects the $\varepsilon_1$-neighborhood of any point of $\Lambda$.
From the third item above, it is contained in the $\varepsilon_1$-neighborhood of $\Lambda$.
Hence $\cO$ and $\Lambda$ are $\varepsilon_1$-close.
\end{proof}

We now turn the periodic orbit $\cO$ into a hyperbolic saddle.

\begin{lemma}
For any $\varepsilon_2>0$, there exists $f_2$ with $d_{C^1}(f,f_2)<\varepsilon_2$
and a hyperbolic orbit $\widetilde \cO$ of $f_2$ that is $\varepsilon_2$-close to $\Lambda$ for the Hausdorff distance
and has saddle type (i.e., both stable and unstable eigenvalues).
\end{lemma}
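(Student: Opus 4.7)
\medbreak
\emph{Plan.} We continue from the periodic orbit $\cO$ produced in the previous lemma and hyperbolize it into a saddle by a further small perturbation. The conservative and dissipative cases are handled separately, the first being the more direct.

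\medbreak
\emph{Conservative case.} We apply the Franks' lemma with linearization (Theorem~\ref{t.linearize}) to $X=\cO$, prescribing linear maps $A_i \colon T_{f_1^i(p)}M \to T_{f_1^{i+1}(p)}M$ arbitrarily $C^1$-close to $Df_1(f_1^i(p))$, lying in $SL(d_0,\RR)$ or $Sp(d_0,\RR)$ according to the conservative setting, and whose composition $\cA=A_\ell\cdots A_1$ is hyperbolic. Such a choice exists by arbitrarily small perturbations, because the matrices with an eigenvalue on the unit circle form a closed nowhere dense subset of $SL(d_0,\RR)$ and of $Sp(d_0,\RR)$. Theorem~\ref{t.linearize} then yields a conservative diffeomorphism $f_2$ that coincides with $f_1$ outside a small neighborhood of $\cO$, still has $\cO$ as a periodic orbit, and whose differential along $\cO$ is hyperbolic. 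Since $\det Df_2^\ell = 1$ in the volume-preserving case, and the eigenvalues pair as $\lambda,\lambda^{-1}$ in the symplectic case, and since $\dim M \geq 2$, the orbit $\cO$ has both contracting and expanding eigenvalues, hence is of saddle type. We set $\widetilde\cO := \cO$.

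\medbreak
\emph{Dissipative case.} Here $\cO$ may be attracting, repelling, elliptic, or a saddle. If $\cO$ already has at least one eigenvalue of modulus less than $1$ and one of modulus greater than $1$, then a small perturbation via Theorem~\ref{t.linearize} hyperbolizes it into a saddle. Otherwise $\cO$ is attracting or repelling; by replacing $f_1$ with $f_1^{-1}$ if necessary we may assume it is attracting. We then invoke Proposition~\ref{p.pliss}: either a $C^1$-small perturbation of $f_1$ already turns $\cO$ into a hyperbolic saddle (and we are done), or the contraction condition $\prod_{x\in\cO}\|Df_1^N(x)\|\leq 2^{-\ell}$ holds. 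To preclude the second alternative, we revisit the construction of $\cO$ in the previous lemma. Since $\Lambda$ admits no $N$-dominated splitting, one can locate a point $x\in\Lambda$ and arbitrarily long $f$-orbit segments issuing from $x$ along which the product of operator norms stays uniformly bounded below (for otherwise one would read off an $N$-dominated splitting on $\Lambda$ by a standard compactness argument). Applying Pugh's closing lemma at such an $x$ produces a new periodic orbit $\cO$ close to $\Lambda$ whose norm products inherit this non-contraction, so that Proposition~\ref{p.pliss} now falls on its first (saddle) alternative and yields $\widetilde\cO$.

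\medbreak
\emph{Main obstacle.} The decisive difficulty is the dissipative strong-contraction subcase, in which one must coordinate the choice of closing point with the lack of dominated splitting of $\Lambda$. This requires converting the qualitative non-domination hypothesis into a quantitative statement about long $f$-orbit segments in $\Lambda$ on which the cocycle is not uniformly contracting, and then running Pugh's closing lemma at such a segment so that the Pliss dichotomy always resolves on the saddle side. Once this is arranged, the conclusion follows by the perturbative tools already developed.
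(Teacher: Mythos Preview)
Your conservative argument contains a genuine error: the set of matrices with an eigenvalue on the unit circle is \emph{not} nowhere dense in $Sp(d_0,\RR)$. Elliptic symplectic matrices (all eigenvalues on the unit circle, none equal to $\pm1$) form an open subset of $Sp(d_0,\RR)$; already in $Sp(2,\RR)=SL(2,\RR)$ the condition $|\mathrm{tr}|<2$ cuts out an open set of non-hyperbolic matrices. Hence you cannot in general push the product $\cA$ off the unit circle by an arbitrarily small symplectic perturbation of the $A_i$. The paper circumvents this as follows: first use Proposition~\ref{p.simple} to make the spectrum simple with all modulus-one eigenvalues roots of unity; then linearize near $\cO$ via Theorem~\ref{t.linearize}. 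In the linear model one finds, arbitrarily close to $\cO$, a periodic orbit $\widetilde\cO$ whose period is a suitable multiple of $\ell$, so that the elliptic block of $Df^{\widetilde\ell}$ becomes the identity. The identity \emph{can} be made hyperbolic by an arbitrarily small symplectic perturbation, and one concludes. The passage to a higher-period nearby orbit is the missing idea.

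In the dissipative case your route is also off. You try to rule out the uniformly-contracting branch of Proposition~\ref{p.pliss} by invoking the lack of dominated splitting on $\Lambda$, claiming that otherwise ``one would read off an $N$-dominated splitting.'' But uniform contraction along orbit segments says nothing about a nontrivial dominated decomposition; a uniformly contracted set has no dominated splitting in the sense used here, so there is no contradiction. The paper instead argues by contradiction using only that $\Lambda$ is transitive and infinite: if every small perturbation keeps $\cO$ a sink, Pliss gives $\frac{1}{\ell}\sum_{x\in\cO}\log\|Df_1^{N_2}(x)\|\leq -\log 2$; letting $f_1\to f$ and $\cO\to\Lambda$, an ergodic limit measure $\mu$ on $\Lambda$ satisfies $\int\log\|Df^{N_2}\|\,d\mu\leq -\log 2$, forcing $\mu$-a.e.\ point to have a full-dimensional stable manifold and hence $\mu$ to be a periodic sink --- impossible since $\Lambda$ is infinite and transitive. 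Non-domination plays no role in this lemma.
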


\begin{proof}
Since $\Lambda$ is non-periodic, the period of the orbit $\cO$ given by the previous lemma is arbitrarily large,
provided $\varepsilon_1$ has been chosen small enough.
The proof then differs in the conservative and in the dissipative cases.

\paragraph{\it In the dissipative case,}
Proposition~\ref{p.simple} and Franks' lemma (e.g., Theorem \ref{t.linearize}) provide a perturbation of $f_1$ so that $\cO$ becomes a hyperbolic periodic orbit
(see also Kupka-Smale theorem~\cite{kupka,smale-kupka}), which is a saddle, a sink or a source.
Let $N_2,T_2\geq1$ be integers defined by Proposition~\ref{p.pliss} given $\frac 1 2 \varepsilon_2>0$.
Since the period of $\cO$ can be chosen arbitrarily large, Proposition~\ref{p.pliss} will apply whenever $\cO$ is attracting.
If, after a $\frac 1 2 \varepsilon_2$-perturbation, one gets a saddle periodic orbit $\widetilde \cO=\cO$, we are done. Otherwise, we can assume that all such perturbations have a sink (the case of a source is left to the reader). Proposition \ref{p.pliss} then yields that
the average of $\log \|Df_1^{N_2}\| $ for the measure  $\mu_\cO$ defined by $\cO$ is smaller than $-\log 2$.
Since $f_1$ can be chosen arbitrarily close to $f$ and $\cO$ to $\Lambda$,
one can take an accumulation point of the measures $\mu_\cO$ and an ergodic component $\mu$. It will be carried on $\Lambda$ with
$\int \log \|Df^{N_2}\| d\mu\leq -\log 2$. In particular $\mu$-almost every point has a stable manifold
of dimension $d_0$ (see for instance~\cite[Theorem 3.11]{ABC}) and $\mu$ is a sink.
This is  a contradiction since $\Lambda$ is transitive and infinite.

\paragraph{\it In the conservative case,}
Proposition~\ref{p.simple} provides a perturbation $f_1$ such that the eigenvalues of $\cO$
are simple and that the eigenvalues with modulus $1$ are roots of the unity.
Theorem~\ref{t.linearize} linearizes the dynamics in a neighborhood of $\cO$ by a further perturbation.
This implies that there exists a periodic orbit $\widetilde \cO$ contained in an arbitrarily small
neighborhood of $\cO$ and whose period $\widetilde \ell$ is a multiple of $\ell$,
such that $Df^{\widetilde \ell}_{|\widetilde \cO}$ is the product of a hyperbolic linear map and the identity. Since the identity can be turned into a hyperbolic linear map
by a small perturbation in the symplectic group, a further small perturbation
provided by Theorems~\ref{t.linearize} ensures that $\widetilde \cO$ is hyperbolic as required.
\end{proof}

The existence of a $N$-dominated splitting passes to the limit when one considers sequences
of diffeomorphisms and of invariant compact sets.
Consequently, if $\varepsilon_2\in (0,\varepsilon/2)$ is chosen small enough,
the previous lemma provides a diffeomorphism $f_2$
with a periodic orbit $\cO$ whose decomposition of the tangent bundle
into stable and unstable spaces is not $N$-dominated and whose period is larger than $T$.

Theorem~\ref{t.gourmelon} and remark~\ref{r.gourmelon}
then build a diffeomorphism $g$ that is $\varepsilon/2$-close to $f_2$
such that $\cO$ has a transverse homoclinic orbit in an arbitrarily small neighborhood of $\cO$.
In particular, this gives a horseshoe in an arbitrarily small neighborhood of $\Lambda$, as required.
\end{proof}

\noindent
\emph{J\'er\^ome Buzzi}\\
{\small LMO, CNRS - UMR 8628, Universit\'e Paris-Sud 11, 91405 Orsay, France.}\\

\noindent
\emph{Sylvain Crovisier}\\
{\small LMO, CNRS - UMR 8628, Universit\'e Paris-Sud 11, 91405 Orsay, France.}\\

\noindent
\emph{Todd Fisher}\\
{\small Department of Mathematics, Brigham Young University,
Provo, UT 84602, USA.}


\begin{thebibliography}{99}
\small \setlength{ \baselineskip}{0mm}

\bibitem{ABC}
F.~Abdenur, C.~Bonatti, S.~Crovisier, Nonuniform hyperbolicity for $C^1$-generic diffeomorphisms.
{\em Israel J. Math.} {\bf 183} (2011), 1--60.

\bibitem{AACS}
A.~Arbieto, A.~Armijo, T.~Catalan, L.~Senos, 
Symbolic extensions and dominated splittings for generic $C^1$-diffeomorphisms.
{\em Math. Z.} {\bf 275}
(2013), 1239--1254. 

\bibitem{arnaud}
M.-C. Arnaud, The generic $C^1$ symplectic diffeomorphisms of symplectic 4-dimensional manifolds are hyperbolic, partially hyperbolic, or have a completely elliptic point. \textit{Ergodic Theory \& Dynam. Systems} {\bf 22} (2002), 1621--1639.

\bibitem{ArBC}
M.-C. Arnaud, C. Bonatti, S. Crovisier,
Dynamiques symplectiques g\'en\'eriques.
\textit{Ergodic Theory \& Dynamical Systems} \textbf{25} (2005), 1401--1436.

\bibitem{avila}
A. Avila, On the regularization of conservative maps.
{\em Acta Math.} {\bf 205} (2010), 5--18.

\bibitem{bochi}
J. Bochi, Genericity of zero Lyapunov exponents.
\textit{Ergodic Theory \& Dynamical Systems} \textbf{22} (2002), 1667--1696.

\bibitem{BV}
J. Bochi, M. Viana, The Lyapunov exponents of generic volume preserving and symplectic maps.
{\em Ann. of Math.} \textbf{161} (2005), 1423--1485.  

\bibitem{BC}
C.~Bonatti, S.~Crovisier, R\'{e}currence et g\'{e}n\'{e}ricit\'{e}.
{\em Invent. Math.} {\bf 158} (2004), 33--104.

\bibitem{BCDG}
C. Bonatti, S. Crovisier, L. D\'iaz, N. Gourmelon, Internal perturbations of homoclinic classes: non-domination, cycles, and self-replication.
\textit{Ergodic Theory \& Dynam. Systems} {\bf 33} (2013), 739--776. 

\bibitem{BCS}
C. Bonatti, S. Crovisier, K. Shinohara, The $C^{1+\alpha}$ hypothesis in Pesin theory revisited.
\emph{J. Modern Dynamics} {\bf 7} (2013), 605--618.

\bibitem{BochiBonatti}
J. Bochi, C. Bonatti, Perturbation of the Lyapunov spectra of periodic orbits.
\emph{Proc. Lond. Math. Soc.}
{\bf 105} (2012), 1--48.

\bibitem{BDP}
C. Bonatti, L. D\'\i az, E.R.~Pujals, A $C^1$-generic dichotomy for diffeomorphisms: weak forms of hyperbolicity or infinitely many sinks or sources. {\em Ann. of Math.} \textbf{158} (2003), 355--418.


\bibitem{BDV}
C. Bonatti, L. D\'\i az, M. Viana, {\em Dynamics Beyond Uniform Hyperbolicity}. Springer, Berlin, 2004.

\bibitem{BGV}
C. Bonatti, N. Gourmelon, T. Vivier, Perturbations of the derivative along periodic orbits.
\textit{Ergodic Theory \& Dynam. Systems} \textbf{26} (2006), 1307--1337.

\bibitem{BCF}
J. Buzzi, S. Crovisier, T. Fisher, Entropy of $C^1$ diffeomorphisms without a dominated splitting. ArXiv:1606.01765.

\bibitem{catalan1}
T. Catalan, A $C^1$ generic condition for existence of symbolic extensions of volume preserving diffeomorphisms.
\emph{Nonlinearity} {\bf 25} (2012), 3505--3525.

\bibitem{catalan2}
T. Catalan, A link between topological entropy and Lyapunov exponents.
ArXiv:1601.04025.

\bibitem{CH}
T. Catalan,  V. Horita,  $C^1$-genericity of symplectic diffeomorphisms and lower bounds for topological entropy. ArXiv:1310.5162.

\bibitem{CT}
T. Catalan, A. Tahzibi, A lower bound for topological entropy of generic non-Anosov symplectic diffeomorphisms.
\textit{Ergodic Theory \& Dynam. Systems}  {\bf 34} (2014), 1503--1524.

\bibitem{C}
S. Crovisier,
Perturbation de la dynamique de diff\'eomorphismes en topologie $C^1$.
{\it Ast\'erisque} {\bf 354} (2013). 

\bibitem{DPU}
L. D\'iaz, E. Pujals, R. Ures. Partial hyperbolicity and robust transitivity. \emph{Acta Math.} {\bf 183} (1999), 1--43.

\bibitem{franks}
J. Franks,  Necessary conditions for stability of diffeomorphisms.
 {\em Trans. Amer. Math. Soc.} {\bf 158} (1971), 301--308.

\bibitem{Gourmelon2010}
N. Gourmelon,
Generation of homoclinic tangencies by $C^1$-perturbations.
\textit{Discrete Contin. Dyn. Syst.} \textbf{26} (2010), 1--42.

\bibitem{Gourmelon2014}
N. Gourmelon, A Franks lemma that preserves invariant manifolds.
\textit{Ergodic Theory \& Dynam. Systems} \textbf{36} (2016), 1167--1203. 

\bibitem{H}
S. Hayashi, Connecting invariant manifolds and the solution of the $C^1$-stability and $\Omega$-stability conjectures for flows. \textit{Ann. of Math.} \textbf{145} (1997), 81--137 and
\textit{Ann. of Math.} \textbf{150}  (1999), 353--356.

\bibitem{HT}
V. Horita, A. Tahzibi, Partial hyperbolicity for symplectic diffeomorphisms. \emph{Ann. Inst. H. Poincar\'e Anal. Non Lin\'eaire} {\bf 23} (2006), 641--661.

\bibitem{kupka}
I. Kupka. Contributions \`a la th\'eorie des champs g\'en\'eriques.
\textit{Contrib. Differential Equations} \textbf{2} (1963), 457--484.

\bibitem{liao}
S. Liao, On the stability conjecture. \emph{Chinese Ann. Math.} {\bf 1} (1980), 9--30.

\bibitem{Mane82}
  R~Ma\~n\'e,  An  ergodic  closing  lemma.
  {\em Annals  of Math.}
  {\bf 116} (1982), 503--540.
  
\bibitem{newhouse}
S.  Newhouse,  Quasi-elliptic  periodic  points  in  conservative dynamical  systems.
\emph{Amer. J. Math.} {\bf 99} (1977), 1061--1087.

\bibitem{pliss}
V. Pliss, On a conjecture of smale. \emph{Differ. Uravn.} {\bf 8} (1972), 262--268.

\bibitem{Poincare1891}
H. Poincar\'e, Le probl\`eme des trois corps. \emph{Revue g\'en\'erale sci. pures appl.} {\bf 2} (1891), 1--5. Available at\\ {\tt http://henripoincarepapers.univ-lorraine.fr/chp/hp-pdf/hp1891rg.pdf}

\bibitem{potrie}
R. Potrie, Generic bi-Lyapunov stable homoclinic classes. \emph{Nonlinearity} {\bf 23} (2010), 1631--1649.

\bibitem{pugh}
C. Pugh, The closing lemma. \emph{Amer. J. Math.} {\bf 89} (1967), 956--1009. 

\bibitem{PR}
C. Pugh, C. Robinson,
The $C^{1}$ closing lemma, including Hamiltonians.
\textit{Ergod. Th. \& Dynam. Sys.} \textbf{3} (1983), 261--313.

\bibitem{PS}
E. Pujals, M. Sambarino, Homoclinic tangencies and hyperbolicity for surface diffeomorphisms. \emph{Ann. of Math.} {\bf 151} (2000), 961--1023.

\bibitem{robinson-global-analysis}
C. Robinson, A global approximation theorem for Hamiltonian systems.
\emph{Proceedings of the AMS Summer Institute on Global Analysis} \textbf{14} (1968), 233--243.

\bibitem{robinson-periodique}
C. Robinson, Generic properties of conservative systems I \& II.
\textit{Amer. J. Math.} {\bf 92} (1970), 562--603 and 897--906.

\bibitem{saghin-xia}
R. Saghin, Z. Xia, Partial hyperbolicity or dense elliptic periodic points for $C^1$-generic symplectic diffeomorphisms. \emph{Trans. Amer. Math. Soc.} {\bf 358} (2006), 5119--5138.

\bibitem{smale-kupka}
S. Smale,
Stable manifolds for differential equations and diffeomorphisms.
\textit{Ann. Scuola Norm. Sup. Pisa} \textbf{17} (1963), 97--116.

\bibitem{teixera}
P. Teixeira,
On the conservative pasting lemma. ArXiv:1611.01694.

\bibitem{wen}
L. Wen, Homoclinic tangencies and dominated splittings. \emph{Nonlinearity} {\bf 15} (2002), 1445--1469.

\bibitem{Zehnder}
E. Zehnder, Note on smoothing symplectic and volume-preserving diffeomorphisms. In \emph{Geometry and Topology} (Rio de Janeiro, 1976).
\emph{Lecture Notes in Math.} \textbf{597}, 828--854.

\end{thebibliography}
\end{document}